\documentclass{amsart}
\usepackage{geometry} % see geometry.pdf on how to lay out the page. There's lots.
\usepackage{amssymb}
\usepackage{faktor}
\usepackage[all,cmtip]{xy}
\usepackage{amsmath,amscd}
\usepackage{graphicx}
\usepackage{mathrsfs}
\geometry{a4paper} 
\usepackage{color}
\usepackage{bbm}
\usepackage[mathcal]{euscript}
\usepackage{hyperref}
\hypersetup{colorlinks=true,linkcolor=blue}

\newcommand{\End}{\mathcal{E}\mathrm{nd}}
\newcommand{\HHom}{\mathcal{H}\mathrm{om}}
\newcommand{\Ho}{\mathrm{Ho}}

\newcommand{\Hom}{\mathrm{Hom}}

\newcommand{\Emb}{\mathrm{Emb}}
\newcommand{\Conf}{\mathrm{Conf}}

\newcommand{\pt}{\mathrm{pt}}

\newcommand{\Fr}{\mathrm{Fr}}

\DeclareMathOperator*{\hocolim}{\mathrm{hocolim}}
\DeclareMathOperator*{\colim}{\mathrm{colim}}
\newcommand{\bvt}{\star}

\newcommand{\mat}{\star}

\newcommand \vp{\varphi}
\newcommand \ve{\varepsilon}

\newcommand\ob{\operatorname{Ob}}

\newcommand\op{\mathcal}
\newcommand\cat{\mathsf}

\newcommand\adjunct[4]{\xymatrix{#1\ar @<1.25ex>[rr]^-{#3}&\perp&#2\ar @<1.25ex>[ll]^-{#4}}}

\newcommand\map{\operatorname{Hom}}

\newcommand\Mod[1]{\cat{Mod}_{#1}}
\newcommand\grMod[1]{\cat{grMod}_{#1}}
\newcommand\dgMod[1]{\cat{dgMod}_{#1}}

\newcommand\Ch[1]{\cat{Ch}_{#1}}

\renewcommand\d{\operatorname{diag}}

\renewcommand\S{\cat {sSet}}

\newcommand\F{\cat F}

\newcommand\V{\cat V}

\newcommand\Mfld{\cat{Mfld}}
\newcommand\Disk{\cat{Disk}}

\def\treeof(#1;#2){[#1;#2]}

\newcommand{\comp}{\relax}
\def\comp(#1;#2){#1\circ(#2)}

\theoremstyle{remark}
\theoremstyle{definition}\newtheorem{definition}{Definition}[section]
\theoremstyle{theorem}\newtheorem{lemma}[definition]{Lemma}
\theoremstyle{theorem}\newtheorem{hypothesis}[definition]{Hypothesis}

\theoremstyle{remark}\newtheorem{remark}[definition]{Remark}
\theoremstyle{definition}\newtheorem{notation}[definition]{Notation}
\theoremstyle{definition}\newtheorem{warning}[definition]{Warning}
\theoremstyle{definition}
\theoremstyle{definition}
\theoremstyle{definition}\newtheorem{example}[definition]{Example}
\theoremstyle{theorem}\newtheorem{proposition}[definition]{Proposition}
\theoremstyle{theorem}\newtheorem{corollary}[definition]{Corollary}
\theoremstyle{theorem}\newtheorem{theorem}[definition]{Theorem}
\theoremstyle{definition}
\theoremstyle{theorem}\newtheorem{conjecture}[definition]{Conjecture}

\title{A K\"{u}nneth theorem for configuration spaces}
\author{Kathryn Hess}
\address{EPFL SV UPHESS, Station 8, CH-1015 Lausanne, Switzerland}
\email{kathryn.hess@epfl.ch}
\author{Ben Knudsen}
\address{Department of Mathematics, Northeastern University, Boston 02115, USA}
\email{knudsen@math.harvard.edu}
\date{} % delete this line to display the current date

%%% BEGIN DOCUMENT
\begin{document}

\maketitle

\begin{abstract}
We construct a spectral sequence converging to the homology of the ordered configuration spaces of a product of parallelizable manifolds. To identify the second page of this spectral sequence, we introduce a version of the Boardman--Vogt tensor product for linear operadic modules, a purely algebraic operation. Using the rational formality of the little cubes operads, we show that our spectral sequence collapses in characteristic zero.
\end{abstract}

\section{Introduction}

In this article, we study the singular homology of the \emph{ordered configuration space} of $k$ points in a manifold $X$, which is the space \[\Conf_k(X)=\left\{(x_1,\ldots, x_k)\in X^k: x_i\neq x_j\text{ if } i\neq j\right\}.\] Although these spaces have enjoyed a long history of study in algebraic topology \cite{fadell-neuwirth,may2,mcduff}, complete homology computations remain rare \cite{arnold,cohen-lada-may,FeichtnerZiegler:ICAOCSS}.

The current tool of choice for such computations is the Leray--Serre spectral sequence for the inclusion $\Conf_k(X)\subseteq X^k$, as studied in \cite{totaro}. The first nontrivial differential of this spectral sequence, although explicit, often presents a computation too forbidding to permit further progress, and the spectral sequence is known not to collapse in general \cite{FelixThomas:CSMP}.

Even the celebrated representation stability theorem of \cite{ChurchEllenbergFarb:FIMSRSG} does little to lighten this gloomy outlook; indeed, computing stable multiplicities of representations in $H_*(\Conf_k(X);\mathbb{Q})$ is a difficult open problem in almost all cases \cite[Prob. 3.5]{Farb:RS}. Even the multiplicity of the trivial representation, corresponding to the homology of unordered configuration spaces, was unknown for closed, orientable surfaces of positive genus until very recently---see \cite{Drummond-ColeKnudsen:BNCSS} for the general case and \cite{Maguire:CCCS,Schiessl:BNUCST} for two concurrent computations in the case of the torus.

The main contribution of this article is a new tool for attacking such computations, which is valid for manifolds of the form $X=M\times N$ with $M$ and $N$ parallelizable. Building on \cite{DwyerHessKnudsen:CSP}, we view the sequence $\Conf(M)=\big( \Conf_k(M)\big)_{k\geq 0}$ of configuration spaces of $M$ as a module over the little $m$-cubes operad $\op C_m$, and similarly for $N$ and $M\times N$. The main result is a kind of K\"{u}nneth decomposition  in terms of the \emph{linearized Boardman--Vogt tensor product} $\star$, introduced below in Section \ref{section:linearized tensor product}.

\begin{theorem}[``K\"{u}nneth'' spectral sequence]\label{thm:kuenneth ss}
Let $M$ and $N$ be parallelizable $m$- and $n$-manifolds, respectively, and $R$ a commutative ring such that $H_*(\Conf_k(M);R)$ and $H_*(\Conf_k(N);R)$ are $R$-projective for each $k\geq0$. There is a natural, convergent spectral sequence \[E_{p,q}^2\cong H_p\left(H_*\left(\Conf(M);R\right)\star^\mathbb{L} H_*\left(\Conf(N);R\right)\right)_q\implies H_{p+q}\left(\Conf(M\times N);R\right)\] of $R$-linear $\op C_{m+n}$-modules.
\end{theorem}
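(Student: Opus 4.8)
The strategy is to split the theorem into a \emph{geometric comparison}, identifying the chains on $\Conf(M\times N)$ with a derived linearized Boardman--Vogt tensor product, and a \emph{purely algebraic spectral sequence} of Künneth type computing the homology of any such derived tensor product; the two ingredients are then combined.

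For the geometric comparison I would start from the description, due to \cite{DwyerHessKnudsen:CSP}, of $\Conf(M)$ as a module over the little $m$-cubes operad $\op C_m$: the action enlarges each point of a configuration into a small configuration governed by a little-cubes configuration, the framings coming from a parallelization of $M$. The same applies to $N$ over $\op C_n$ and, since a product of parallelizable manifolds is parallelizable, to $M\times N$ over $\op C_{m+n}$, with the parallelization of $M\times N$ assembled from those of $M$ and $N$. A configuration of points in $M\times N$ is the datum of a configuration of ``columns'' in $M$ together with a configuration of points of $N$ over each column; this is exactly the universal property defining the topological Boardman--Vogt tensor product of the modules $\Conf(M)$ and $\Conf(N)$ over $\op C_m\otimes\op C_n\simeq\op C_{m+n}$ (Dunn additivity). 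Applying singular chains, invoking the projectivity hypothesis to see that the relevant lax-monoidality comparison maps are quasi-isomorphisms, and using the linearized form of Dunn additivity $C_*(\op C_m;R)\star^{\mathbb L}C_*(\op C_n;R)\simeq C_*(\op C_{m+n};R)$ from Section \ref{section:linearized tensor product}, I obtain an equivalence
\[
C_*\big(\Conf(M\times N);R\big)\;\simeq\;C_*\big(\Conf(M);R\big)\star^{\mathbb L}C_*\big(\Conf(N);R\big)
\]
of $C_*(\op C_{m+n};R)$-modules.

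The algebraic input is a Künneth spectral sequence for the bifunctor $\star^{\mathbb L}$ itself: for a dg-module $P$ over a dg-operad $\op O$ and a dg-module $Q$ over a dg-operad $\op P$ whose homologies are $R$-projective in each arity, there is a natural convergent spectral sequence
\[
E^2_{p,q}\cong H_p\big(H_*(P)\star^{\mathbb L}H_*(Q)\big)_q\;\implies\;H_{p+q}\big(P\star^{\mathbb L}Q\big).
\]
To build it, replace $P$ and $Q$ by two-sided bar constructions $B(\op O,\op O,P)$ and $B(\op P,\op P,Q)$ --- cofibrant replacements for the model structure on linear operadic modules, after a further $\Sigma$-free replacement if necessary --- and realize the bisimplicial dg-module $B(\op O,\op O,P)\star B(\op P,\op P,Q)$, which computes $P\star^{\mathbb L}Q$. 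Filtering by simplicial degree gives a spectral sequence whose $E^1$-page, by the projectivity hypothesis together with a Künneth isomorphism for the composition product of symmetric sequences --- available because the symmetric-group actions on the resolving objects may be taken free and $H_*(\op C_\bullet;R)$ is $R$-free --- is the bisimplicial object assembled from $H_*(\op O)$, $H_*(\op P)$, $H_*(P)$ and $H_*(Q)$; its homology identifies $E^2$ with the homology of $B\big(H_*\op O,H_*\op O,H_*P\big)\star B\big(H_*\op P,H_*\op P,H_*Q\big)$, i.e.\ with $H_*\big(H_*(P)\star^{\mathbb L}H_*(Q)\big)$. Convergence follows because the filtration is bounded in each arity and internal degree.

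Applying this spectral sequence to $P=C_*(\Conf(M);R)$ and $Q=C_*(\Conf(N);R)$, whose homologies are $R$-projective by hypothesis, and transporting the abutment along the geometric comparison, produces the asserted spectral sequence converging to $H_{p+q}(\Conf(M\times N);R)$; naturality in $M$ and $N$ and the $\op C_{m+n}$-module structure on every page are inherited from those of the bar constructions and of $\star$. I expect the main obstacle to be the geometric comparison: one must verify that the identification of $\Conf(M\times N)$ with the Boardman--Vogt tensor product is an equivalence of $\op C_{m+n}$-modules, not merely of underlying symmetric sequences, and one must control how cofibrancy interacts with $C_*(-;R)$ so that the derived tensor product is computed correctly. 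A secondary difficulty is the Künneth isomorphism for the composition product used to identify $E^2$: since $\circ$ involves $\Sigma_k$-orbits of tensor powers, $R$-projectivity of homology alone is not enough, and one must exploit freeness of the symmetric-group actions in a judiciously chosen resolution.
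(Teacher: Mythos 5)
Your overall architecture---a geometric identification of $\Conf(M\times N)$ with a derived Boardman--Vogt tensor product of the modules $\Conf(M)$ and $\Conf(N)$, followed by a purely algebraic K\"unneth-type spectral sequence for $\star^{\mathbb L}$ built from bar resolutions---is the same as the paper's. The genuine gap is in the geometric comparison. You assert that ``a configuration of points in $M\times N$ is the datum of a configuration of columns in $M$ together with a configuration of points of $N$ over each column; this is exactly the universal property defining the topological Boardman--Vogt tensor product,'' but no such tautology holds: the underived tensor product $\Conf(M)\star\Conf(N)$ only sees grid-like configurations, and the statement you need is the \emph{derived} equivalence $\op C^{G\times H}_{M\times N}\simeq \op C^G_M\star^{\mathbb L}\op C^H_N$, which is the main theorem of \cite{DwyerHessKnudsen:CSP} (global additivity). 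That theorem is not a formal consequence of Dunn additivity $\op C_m\star\op C_n\simeq\op C_{m+n}$; it requires a local-to-global argument (disk decomposition/hypercover), and the paper's proof of the present theorem consists precisely of citing it, combined with Dunn--Brinkmeier additivity and the algebraic spectral sequence. You neither cite this result nor sketch a replacement for it, and the ``linearized form of Dunn additivity $C_*(\op C_m;R)\star^{\mathbb L}C_*(\op C_n;R)\simeq C_*(\op C_{m+n};R)$ from Section \ref{section:linearized tensor product}'' that you invoke does not appear there: that section defines the linearized tensor product for graded $R$-linear modules and contains no chain-level additivity statement. Flagging the identification as ``the main obstacle'' does not close this gap, since it is the substantive input of the whole theorem.

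On the algebraic half your plan is viable in outline and close to the paper's Theorem \ref{thm:general kuenneth}: resolve by bar constructions, realize, filter by simplicial degree, and identify $E^2$ via a K\"unneth argument on free levels. The difference is that you use two-sided bar constructions over the operads themselves, i.e.\ resolutions free over \emph{symmetric} sequences, whose levels involve $\Sigma_k$-coinvariants in the composition product; this is exactly why you are forced into the ``$\Sigma$-free replacement'' caveat you mention, and it is left vague. The paper sidesteps this entirely by using the \emph{sequential} bar construction $B^{\op O}_{\cat N}$ (free over sequences, Lemma \ref{lem:bar cofibrant}), so that the levels are totally free modules for which homology commutes with $\star$ under the stated projectivity hypotheses alone (Lemmas \ref{lem:bv vs linear bv}, \ref{lem:bar vs linear bar}, \ref{lem:bar cofibrant rlin}). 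If you keep your route, you must actually produce the $\Sigma$-free resolution compatible with the module structures and with the linearized $\star$, or switch to the sequential bar construction; note also that the paper runs this step at the space level (Proposition \ref{prop:linearization spectral sequence} applied to a simplicial module), so it never needs the chain-level comparison maps your dg formulation requires.
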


For many purposes, the utility of a spectral sequence is determined by one's knowledge---or, more typically, lack of knowledge---of its differentials. We resolve this difficulty over the rationals, which is already a case of great interest.

\begin{theorem}[Collapse]\label{thm:collapse}
If $R$ is a field of characteristic zero, then the spectral sequence of Theorem \ref{thm:kuenneth ss} degenerates at $E^2$.
\end{theorem}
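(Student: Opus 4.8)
The plan is to use the rational formality of the little cubes operads to replace the spectral sequence of Theorem~\ref{thm:kuenneth ss} by an algebraic spectral sequence over the homology operads, and then to deduce the collapse from a formality statement for configuration spaces themselves. First I would recall that, over a field $R$ of characteristic zero, the little $j$-cubes operad is formal for every $j$: there is a zig-zag of quasi-isomorphisms of operads in $R$-chain complexes between $C_*(\op C_j;R)$ and its homology $H_*(\op C_j;R)$ (elementary for $j\le1$; the theorem of Tamarkin, Kontsevich, Lambrechts--Voli\'c, and Fresse--Willwacher for $j\ge2$). The input I actually need is a refinement compatible with the Boardman--Vogt tensor product $\otimes_{\mathrm{BV}}$ of operads: the Dunn-additivity morphism $C_*(\op C_m;R)\otimes_{\mathrm{BV}}C_*(\op C_n;R)\to C_*(\op C_{m+n};R)$ should correspond, in the homotopy category of operads, to a morphism $H_*(\op C_m;R)\otimes_{\mathrm{BV}}H_*(\op C_n;R)\to H_*(\op C_{m+n};R)$ of homology operads. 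One can try to obtain such coherence from the compatibility of Kontsevich's graph models with the coordinate inclusions $\op C_j\hookrightarrow\op C_{j+1}$, descending to $\mathbb{Q}$, or from intrinsic-formality statements, which make it automatic. Since the spectral sequence of Theorem~\ref{thm:kuenneth ss} is produced functorially from a bar-type resolution computing $\star^{\mathbb{L}}$ (Section~\ref{section:linearized tensor product}), these equivalences identify it, from the $E^2$-page on, with the analogous spectral sequence now formed over the homology operads, with the same $E^2$-page $H_p\bigl(H_*(\Conf(M);R)\star^{\mathbb{L}}H_*(\Conf(N);R)\bigr)_q$ and the same abutment $H_{p+q}(\Conf(M\times N);R)$.

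To force the collapse I would then promote operadic formality to a formality statement for the relevant \emph{modules}: for parallelizable $M$ and $N$, the sequences $\Conf(M)$ and $\Conf(N)$ should be rationally formal as modules over $\op C_m$ and $\op C_n$, which I would extract from formality of $\op C_m$, $\op C_n$ together with the explicit computation of $H_*(\Conf(-);R)$ underlying the approach of \cite{DwyerHessKnudsen:CSP} (equivalently, via factorization homology). Granting this, and using the ``K\"unneth'' equivalence $C_*(\Conf(M\times N);R)\simeq C_*(\Conf(M);R)\star^{\mathbb{L}}C_*(\Conf(N);R)$ from which the spectral sequence is built, one obtains a chain-level equivalence $C_*(\Conf(M\times N);R)\simeq H_*(\Conf(M);R)\star^{\mathbb{L}}H_*(\Conf(N);R)$; passing to homology, $\dim_R H_n(\Conf(M\times N);R)=\sum_{p+q=n}\dim_R E^2_{p,q}$ for every $n$. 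Because the spectral sequence converges and each $E^\infty_{p,q}$ is a subquotient of $E^2_{p,q}$, a dimension count in each total degree (all groups in sight being finite-dimensional) then forces $E^\infty=E^2$, i.e.\ all differentials vanish. An alternative, in the spirit of \cite{totaro}, would be to equip the whole spectral sequence with a weight grading coming from formality and from the weight-graded structure of the models for $H_*(\Conf(-);R)$, and argue that purity of the $E^2$-page is incompatible with nonzero differentials; it is less clear to me that this route applies directly here.

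I expect the main obstacle to be the module-formality statement of the second paragraph. Formality of $\Conf(M)$ as an $\op C_m$-module over $\mathbb{Q}$ is strictly stronger than formality of the underlying space --- which can fail even rationally --- so one must genuinely exploit the operadic structure and the parallelizability hypothesis, presumably by showing that the explicit differential graded model for $C_*(\Conf(M);R)$ produced by factorization homology is quasi-isomorphic, as a module, to its homology. The coherence of rational formality with $\otimes_{\mathrm{BV}}$ in the first paragraph is a secondary but nontrivial point; granted these two inputs, the rest is a functorial manipulation of the construction of Theorem~\ref{thm:kuenneth ss} together with the elementary spectral-sequence argument above.
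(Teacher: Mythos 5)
There is a genuine gap, and it sits exactly where you suspect. The statement that $\Conf(M)$ is rationally formal as a $\op C_m$-module is not something one can ``extract'' from formality of $\op C_m$ together with knowledge of $H_*(\Conf(M);R)$; it is essentially the whole content of the theorem, and your proposal offers no argument for it. The paper neither assumes nor directly proves module formality. Instead, it writes the module $\op E^G_M=C_*(\HHom_{\Mfld^G_m}(\mathbb{R}^m,M);R)$ as a homotopy colimit of a \emph{Yoneda diagram} over $\Disk^{G,\delta}_{m/M}$, i.e.\ a diagram of representable (totally free) modules (Lemma \ref{lem:hocolim over disks}), and proves that for a formal, $R$-projective operad, homology commutes with homotopy colimits of Yoneda diagrams (Proposition \ref{prop:yoneda hocolim}); the reason this works is that the comparison maps involve only morphisms between representables, which are determined by homology classes of operations, where formality can be applied honestly since $\F(H_*(\op O))(I,J)$ has trivial differential. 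Feeding this local-to-global decomposition into the chain of equivalences in the proof of Theorem \ref{thm:structured collapse} (with the dg-ification functor of Section \ref{section:dgification} mediating between simplicial linear and dg modules) identifies the simplicial object $H_*(\Conf(M))\star^{\mathbb L}H_*(\Conf(N))$ with the \emph{constant} simplicial module $H_*(\Conf(M\times N))$; hence the $E^2$-page is concentrated in the column $p=0$ and collapse is automatic. This is strictly stronger than the equality of total dimensions your count would produce. Your closing spectral-sequence argument (dimension count in each arity and total degree, using convergence and finite-dimensionality) is fine as far as it goes, but without a substitute for the Yoneda/hocolim mechanism---or an independent proof of module formality \emph{plus} the homotopy invariance of $\star^{\mathbb L}$ along the formality zig-zag in a dg setting the paper only reaches via dg-ification---your second paragraph is an assumption, not a proof.

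A secondary point: your first paragraph asks for formality compatible with the Dunn additivity morphism $C_*(\op C_m)\star C_*(\op C_n)\to C_*(\op C_{m+n})$. The paper needs nothing of the sort: Hypothesis \ref{hyp:formality} requires only formality of the three operads $\op E^G_m$, $\op E^H_n$, $\op E^{G\times H}_{m+n}$ \emph{separately}, and the additivity map enters only through the identification of the abutment via \cite{DwyerHessKnudsen:CSP}, which is already part of Theorem \ref{thm:kuenneth ss}. Your compatibility requirement is essentially a relative formality statement for the inclusions $\op C_m\to\op C_{m+n}$, which is a much stronger and more delicate input (it is known to fail in low codimension), so you should not build the argument on it; as far as I can tell, your own dimension count never actually uses it, since the comparison of $\star^{\mathbb L}$ over chains with $\star^{\mathbb L}$ over homology takes place over $\op C_m$ and $\op C_n$ individually.
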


The proof relies on Kontsevich's formality theorem, as recently extended by Fresse--Willwacher \cite{fresse-willwacher}, and applies to any field over which formality holds---at the time of writing, the formality question remains unanswered in odd characteristic. Both results have natural extensions to non-parallelizable manifolds, and we prove these more general statements assuming a certain additivity conjecture of \cite{DwyerHessKnudsen:CSP}---see Corollary \ref{cor:structured spectral sequence} and Theorem \ref{thm:structured collapse} below.

Theorems \ref{thm:kuenneth ss} and \ref{thm:collapse} reduce the computation of the rational homology of ordered configuration spaces of products---assuming knowledge of the factors---to a purely algebraic problem in the representation theory of certain combinatorial categories. For example, in the case $m=n=1$, which encompasses the motivating example of the torus, the relevant representation theory is that of Pirashvili--Richter's category of non-commutative finite sets \cite{pirashvili-richter}. We will return to these computations in the sequel to this paper.

\begin{remark}
With more care, one can construct a cohomology spectral sequence with $E_2$ describable in terms of a certain cotensor product of linear operadic comodules. We leave the details of this extension to future work or the enthusiastic reader.
\end{remark}

\begin{remark}
It is natural to imagine that the assumption of projectivity in Theorem \ref{thm:kuenneth ss} could be weakened to one of flatness by working with slightly different model structures. In practice, of course, the ring of interest is typically either a field or the integers, so the utility of such a weakening would likely be small.
\end{remark}

\subsection{Conventions} We adhere to the following throughout.
\begin{enumerate}
\item We work over a fixed commutative, unital ring $R$. 
\item We always consider the categories $\cat{sSet}$ of simplicial sets and $\Ch{R}$ of unbounded chain complexes of $R$-modules to be equipped with the standard Kan--Quillen and projective model structures, respectively.
\item Abusively, the phrase ``simplicial category" refers to a category enriched over $\cat {sSet}$.
\item We write $\F$ for the category of finite sets and functions between them, $\cat \Sigma\subseteq \F$ for the wide subcategory of bijections, and $\cat{N}\subseteq \F$ for the discrete subcategory consisting of the sets $\{1,\ldots, n\}$ for $n\geq0$.
\item Manifolds are implicitly smooth of finite type.
\item Topological groups are assumed to be locally compact and Hausdorff.
\item We write $\Sigma_I$ for the automorphisms of the finite set $I$ and set $\Sigma_n=\Sigma_{\{1,\dots, n\}}$.
\item Given a comonad $\mathbb K=(K,\Delta, \ve)$ on a category $\cat A$, we write $B_\bullet^{\mathbb K}: \cat A \to \cat A^{\cat \Delta ^{\mathrm{op}}}$ for the comonadic bar construction on $\mathbb{K}$. Explicitly, $B_n^{\mathbb K}(A)=K^{n+1}(A)$, and the faces and degeneracies are induced by the counit $\ve$ and comultiplication $\Delta$, respectively.
\item Variations on the Boardman--Vogt tensor product considered below will be denoted by the symbol $\star$ rather than $\otimes$. This notation is motivated by the desire to avoid confusion with other tensor products at play, but the reader is warned that it differs from that of most other references.
\end{enumerate}

\subsection{Acknowledgements} The authors thank the Isaac Newton Institute for Mathematical Sciences for support and hospitality during the programme Homotopy Harnessing Higher Structures. This work was supported by EPSRC grants EP/K032208/1 and EP/R014604/1 and NSF award 1606422.

\section{Operads and modules}

In this section, we recall the concepts from the theory of operads and operadic modules employed throughout the remainder of the article.

\subsection{Operads and modules}
We review here the fundamental definitions that we will use throughout the remainder of the paper. Throughout this section $(\cat \V, \otimes, \Hom_{\cat V}, 1_{\cat V})$ denotes a cocomplete closed symmetric monoidal category with an initial object, $\varnothing$. The two examples relevant for our purposes are the categories $\cat{sSet}$ of simplicial sets, equipped with the Cartesian monoidal structure, and $\cat{Ch}_R$ of unbounded chain complexes over a commutative ring $R$, equipped with the tensor product over $R$.

\begin{definition} A \emph{sequence} in $\cat V$ is a functor $\cat{N}\to \cat{V}$. A \emph{symmetric sequence} in $\cat V$ is a functor $\cat \Sigma^{op} \to \cat {V}$.
\end{definition}

A symmetric sequence is determined by the $\Sigma_n$-objects $\op X(n):=\op X\big(\{1,...,n\}\big)$ for $n\geq0$. The $\V$-category of symmetric sequences carries a (non-symmetric) monoidal structure called the \emph{composition product}, which is defined by the formula
\[(\op Y\circ\op X)(I)=\colim_{\cat \Sigma}\left(J\mapsto  \op Y(J)\otimes\bigoplus_{f\in \F(I,J)}\bigotimes_{j\in J}\op X(f^{-1}(j))\right),\]
where $\op X$ and $\op Y$ are symmetric sequences and $I$ is a finite set.   

\begin{definition}
An \emph{operad} in $\cat V$ is a monoid in $(\cat {\cat V}^{\cat \Sigma^{op}},\circ)$. A map of operads is a map of monoids.
\end{definition}

We write $\cat {Op}(\cat V)$ for the category of operads in $\cat V$ and abbreviate this notation to $\cat{Op}$ in the case $\cat V=\cat{sSet}$.

\begin{example}
The \emph{unit operad} $\op J$ is the unique operad in $\cat V$ with \[\op J(I)=\begin{cases}
1_{\cat V}&:\quad |I|=1\\
\varnothing &:\quad |I|\neq 1.
\end{cases}\] Any other operad $\op O$ in $\cat V$ receives a canonical map of operads from $\op J$.
\end{example}

We pause to establish some notation that will be useful in Section \ref{sec:bvt} below.

\begin{notation} For finite sets $I$ and $J$ and a symmetric sequence $\op X$, we write $\pi_i:\{i\}\times J \to J$ for the projection and $\pi_i^*: \op X(J)\to \op X( \{i\} \times J)$ for the induced map. Similarly, we write $\bar\pi_j:I\times \{j\} \to I$ for the projection and $\bar \pi_j^*: \op X(I)\to \op X( I \times \{j\})$ for the induced map.
\end{notation}

\begin{remark}\label{rmk:comp-prod} The summand of $(\op Y \circ \op X)(I\times J)$ corresponding to the projection $I\times J \to J$ is
\[\op Y(J) \otimes_{\Sigma_J} \bigotimes_{j\in J} \op X\big( I\times \{j\}\big)\cong \op Y(J) \otimes_{\Sigma_J} \bigotimes_{j\in J} \op X\big( I\big),\]
where the isomorphism is induced by the maps $\bar\pi_j^*$.  Similarly, the summand of $(\op X \circ \op Y)(I\times J)$ corresponding to the projection $I\times J \to I$ is
\[\op X(I) \otimes_{\Sigma_I} \bigotimes_{i\in I} \op Y\big( \{i\}\times J\big)\cong \op X(I) \otimes_{\Sigma_I} \bigotimes_{i\in I} \op Y\big( J\big).\]
\end{remark}

We turn now to the theory of right modules over an operad $\op O$, which can be described as enriched presheaves on a certain $\cat V$-category associated to $\op O$---see \cite[Section 3]{arone-turchin}, for example. For any subcategory $\F'$ of $\F$, there is a $\cat V$-category $\F'(\op O)$ with objects the objects of $\F'$, hom objects given by \[\Hom_{\F'(\op O)}(I,J)= \coprod_{f\in \F'(I,J)} \bigotimes_{j\in J}\op O \big( f^{-1}(j)\big),\] and composition defined using composition in $\F'$ and the operad structure of $\op O$. A map of operads $\vp\colon  \op O \to \op P$ induces a $\cat V$-functor $\F'(\op O) \to \F'(\op P)$ covering the identity on $\F'$, which we also denote by $\vp$. Moreover, the inclusion $\F' \hookrightarrow \F$ induces a $\cat{V}$-functor $\F' (\op O) \to \F(\op O)$ for every operad $\op O$.

\begin{definition}
Let $\op O$ be an operad in $\cat V$. A right \emph{$\op O$-module} is a $\cat V$-functor $\F(\op O)^{op}\to \cat V$.
\end{definition}

We write $\Mod{\op O}$ for the $\cat V$-category of right $\op O$-modules. We typically omit the adjective ``right,'' as no other type of module will enter the discussion.

\begin{remark}
An obvious modification of this definition leads to a notion of $\op O$-module in any $\cat V$-category $\op C$. We shall have no use for such generality.
\end{remark}

\begin{example}
In the case $\op O=\op J$ of the unit operad, a $\op J$-module is simply a symmetric sequence.
\end{example}

Suppose now that $\cat V$ is $\cat{V}$-bicomplete. For any morphism of operads $\vp: \op O\to \op P$, there is an induced $\cat V$-adjunction
\[\adjunct {\Mod{\op O}}{\Mod{\op P},}{\vp_!}{\vp^*}\]
where $\vp^*$ is precomposition with $\vp$ and $\vp_!$ is enriched left Kan extension along $\vp$. 

\begin{example} When $\eta: \op J \to \op O$ is the unit map of the operad $\op O$, then the adjunction 
\[\adjunct {\Mod{\op J}}{\Mod{\op O}}{\eta_!}{\eta^*}\]
is the usual free-forgetful adjunction for $\op O$-modules.
\end{example}

It will be important for us in what follows to work with a more restrictive notion of free module. Write $\iota : \cat{N}=\cat{N}(\op J) \hookrightarrow \F (\op J)$ for the inclusion functor, and set $\mathbb F_{\op O}=\eta_!\circ\iota_!$ and $\mathbb U_{\op O}=\iota^*\circ\eta^*$.

\begin{definition} An $\op O$-module $\op M$ is \emph{totally free} if it lies in the essential image of $\mathbb{F}_{\op O}$.
\end{definition}

\begin{remark}
Take $\cat V=\cat{sSet}$. The category $\cat{sSet}^{\cat{N}(\op J)}$ is simply the category of sequences in $\cat{sSet}$, and the functor $\iota_!$ sends a sequence $(C_n)_{n\in \mathbb N}$ to $(C_n\otimes \Sigma_n)_{n\in \mathbb N}$, where $\otimes$ denotes simplicial tensoring.
\end{remark}

The $(\mathbb{F}_{\op O}, \mathbb{U}_{\op O})$-adjunction witnesses $\op O$-modules as monadic over sequences (not over \emph{symmetric} sequences). The corresponding comonad $\mathbb{K}_{\op O}$ will be important in the following section in defining a version of the bar construction.

\subsection{Homotopy theory of operadic modules}

Under reasonable conditions, $\Mod{\op O}$ inherits a model structure from $\cat V$, as established in \cite[Section 7]{moser}.  We are particularly interested in the following cases.

\begin{proposition}\label{prop:module model cat} Let $\cat V$ denote either $\cat {sSet}$ or $\cat{Ch}_R$, equipped with the Kan-Quillen or projective model structure, respectively. For every $\op O\in \cat {Op}(\cat V)$, the category $\Mod{\op O}$ admits  the \emph{projective model structure} inherited from $\cat V$, in which weak equivalences and fibrations are defined objectwise.
This is a proper $\cat V$-model structure.
\end{proposition}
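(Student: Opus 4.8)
The plan is to transfer the model structure from $\cat V$ along the free-forgetful adjunction $(\mathbb F_{\op O}, \mathbb U_{\op O})$ — or equivalently along the composite $\iota^*\eta^*\colon \Mod{\op O}\to \cat V^{\cat N}$ — using the standard Kan transfer (lifting) criterion. Since $\cat V^{\cat N}=\prod_{n\geq 0}\cat V$ is a product of copies of $\cat V$, it carries the obvious projective (= objectwise) model structure, which is cofibrantly generated whenever $\cat V$ is; both $\cat{sSet}$ and $\cat{Ch}_R$ are cofibrantly generated, so the generating (acyclic) cofibrations of $\cat V^{\cat N}$ are the maps concentrated in a single arity $n$ on a generating (acyclic) cofibration of $\cat V$. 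Pushing these forward along $\mathbb F_{\op O}$ yields candidate generating sets in $\Mod{\op O}$. One then verifies the two hypotheses of the transfer theorem: (i) $\Mod{\op O}$ is bicomplete — this holds because it is a category of enriched presheaves on the $\cat V$-category $\F(\op O)^{op}$ and $\cat V$ is bicomplete; (ii) every relative $\mathbb F_{\op O}(J)$-cell complex, for $J$ the generating acyclic cofibrations, is a weak equivalence after applying $\mathbb U_{\op O}$. For (ii) the usual argument is to exhibit a functorial fibrant replacement or, more robustly, a path object in $\Mod{\op O}$ detected by $\mathbb U_{\op O}$; in $\cat{Ch}_R$ every object is fibrant so this is automatic, while in $\cat{sSet}$ one uses that $\Mod{\op O}$ is $\cat{sSet}$-enriched and tensored, so $\op M^{\Delta^1}$ furnishes a functorial path object. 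Alternatively, since this is exactly the content of \cite[Section 7]{moser} cited just above, I would invoke that reference for existence and cofibrant generation and concentrate the remaining work on the two enrichment properties.

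The second task is to check that the resulting model structure is a $\cat V$-model structure, i.e.\ satisfies the pushout-product (SM7) axiom with respect to the objectwise tensoring and cotensoring of $\Mod{\op O}$ over $\cat V$ (these exist because $\cat V$ is closed symmetric monoidal and colimits/limits in $\Mod{\op O}$ are computed objectwise). Here the key point is that both tensoring with an object of $\cat V$ and the forgetful functor $\mathbb U_{\op O}$ are computed arity-wise: a (acyclic) cofibration of $\op O$-modules need not be an objectwise (acyclic) cofibration, but the generating (acyclic) cofibrations are, by construction, free on objectwise ones, and the pushout-product of a free map $\mathbb F_{\op O}(i)$ with a cofibration $j$ of $\cat V$ is again free — concretely, $\mathbb F_{\op O}(i)\mathbin{\Box} j\cong \mathbb F_{\op O}(i\mathbin{\Box} j)$ because $\mathbb F_{\op O}$ is a left adjoint and tensoring commutes with it in the appropriate sense — and $i\mathbin{\Box} j$ is a cofibration in $\cat V^{\cat N}$ by the pushout-product axiom for $\cat V$. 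Since every cofibration in $\Mod{\op O}$ is a retract of a transfinite composite of pushouts of generating cofibrations, and the pushout-product functor preserves all of these operations in each variable, SM7 follows for generating maps and hence for all cofibrations. The acyclic case is identical, using that $\mathbb U_{\op O}$ detects weak equivalences and that acyclic cofibrations in $\cat V$ satisfy SM7.

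Finally, for properness: right properness is immediate because fibrations and weak equivalences are objectwise and $\cat V$ ($=\cat{sSet}$ or $\cat{Ch}_R$) is right proper, so a pullback computed arity-wise preserves weak equivalences along fibrations. Left properness requires a little more care since cofibrations of $\op O$-modules are not objectwise cofibrations; the clean route is to note that $\cat{Ch}_R$ is left proper because every object is cofibrant, and $\cat{sSet}$ is left proper, and then run the standard argument that a category transferred from a left proper model category along a left adjoint whose right adjoint preserves filtered colimits (and such that the generating cofibrations are free on cofibrations) is again left proper — pushouts of cofibrations are analyzed cell-by-cell, reducing to the behavior of $\mathbb F_{\op O}$ applied to a generating cofibration, which is a pushout along a map that $\mathbb U_{\op O}$ sends to a cofibration. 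Again, much of this is already subsumed by \cite[Section 7]{moser}, which I would cite for the model structure and its left properness, proving by hand only the $\cat V$-enrichment (SM7) and right properness if they are not stated there in the exact form needed.

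I expect the main obstacle to be the pushout-product axiom, specifically controlling the interaction between the free functor $\mathbb F_{\op O}$ (which introduces the operad action and symmetrization) and the tensoring over $\cat V$: one must be careful that $\mathbb F_{\op O}(i)\mathbin{\Box} j$ really is free on $i\mathbin{\Box} j$ and not merely a retract thereof, and that no non-free cofibrations are needed, which ultimately rests on the fact that $\Mod{\op O}$ is monadic over the \emph{non-symmetric} sequences $\cat V^{\cat N}$ — the arity-wise $\Sigma_n$-freeness built into $\iota_!$ is precisely what makes the transferred cofibrations behave well under tensoring.
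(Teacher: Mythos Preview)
Your proposal is correct in substance and in spirit matches the paper's approach, which is simply to cite \cite{moser}: the paper's entire proof reads ``See Examples 7.6 and 7.8 and Propositions 8.1 and 8.2 in \cite{moser}.'' You have written out by hand what those cited results establish---transfer along the free--forgetful adjunction, SM7 via the isomorphism $\mathbb F_{\op O}(i)\mathbin{\Box} j\cong \mathbb F_{\op O}(i\mathbin{\Box} j)$, and properness---and you yourself note that you would ultimately defer to \cite[Section 7]{moser} for most of it. So there is no genuine difference in strategy, only in the level of detail.

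One small correction: your aside that ``$\cat{Ch}_R$ is left proper because every object is cofibrant'' is not right for general $R$---in the projective model structure on unbounded chain complexes, cofibrant objects are (in particular) degreewise projective, so e.g.\ $R/I$ in degree zero need not be cofibrant. Left properness of $\cat{Ch}_R$ holds for other reasons (pushouts along monomorphisms with degreewise projective cokernel preserve quasi-isomorphisms). Since you intend to cite \cite{moser} for left properness anyway, this does not affect the overall argument.
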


\begin{proof} See Examples 7.6 and 7.8 and Propositions 8.1. and 8.2 in \cite{moser}.
\end{proof}

It is easy to see that if $\V$ is either $\cat {sSet}$ or $\cat{Ch}_R$, then, for any morphism of operads $\vp: \op O\to \op P$, the adjunction
\[\adjunct{\Mod{\op O}}{\Mod{\op P}}{\vp_!}{\vp^*}\]
is a Quillen adjunction with respective the projective model structures. 

For the rest of this subsection, we restrict to $\V=\cat {sSet}$.

\begin{definition} Let $\op O$ be a simplicial operad. The \emph{sequential $\op O$-bar construction} is the composite \[B_{\cat N}^{\op O}:\Mod{\op O}\xrightarrow{B_\bullet^{\mathbb K_{\op O}}} \Mod{\op O}^{\cat \Delta ^{\mathrm{op}}}\xrightarrow{|-|}\Mod{\op O}\] of the simplicial comonadic bar construction with geometric realization.
\end{definition}

\begin{warning}
The reader is urged not to confuse our sequential bar construction, which is premised on viewing $\op O$-modules as monadic over sequences, with the more standard bar construction premised on viewing $\op O$-modules as monadic over \emph{symmetric sequences}.
\end{warning}

\begin{lemma}\label{lem:bar cofibrant} Let $\op O$ be a simplicial operad. For any $\op O$-module $\op M$, the natural map $B^{{\op O}}_{\cat{N}}(\op M)\to \op M$ is a cofibrant replacement in $\Mod{\op O}$.
\end{lemma}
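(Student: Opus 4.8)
The plan is to exhibit the augmented simplicial object $B_\bullet^{\mathbb{K}_{\op O}}(\op M) \to \op M$ as a simplicial resolution whose realization is both cofibrant and weakly equivalent to $\op M$, using the standard formalism for comonadic bar resolutions in a simplicial model category. First I would recall that the comonad $\mathbb{K}_{\op O}$ arises from the adjunction $(\mathbb{F}_{\op O}, \mathbb{U}_{\op O})$ between $\Mod{\op O}$ and sequences in $\cat{sSet}$; in particular $\mathbb{K}_{\op O} = \mathbb{F}_{\op O}\mathbb{U}_{\op O}$, and each level $B_n^{\mathbb{K}_{\op O}}(\op M) = \mathbb{F}_{\op O}(\mathbb{U}_{\op O}\mathbb{F}_{\op O})^n\mathbb{U}_{\op O}(\op M)$ is a totally free $\op O$-module, hence cofibrant in the projective model structure (totally free modules are built by left Quillen functors from sequences in $\cat{sSet}$, every object of which is cofibrant).

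Second I would address the weak equivalence $B^{\op O}_{\cat N}(\op M) = |B_\bullet^{\mathbb{K}_{\op O}}(\op M)| \to \op M$. The key input is that $\mathbb{U}_{\op O}$ applied to the augmented simplicial object $\mathbb{U}_{\op O}(B_\bullet^{\mathbb{K}_{\op O}}(\op M)) \to \mathbb{U}_{\op O}(\op M)$ admits an extra degeneracy: this is the standard fact that $\mathbb{U}_{\op O}\varepsilon$, together with the unit of the adjunction, produces a contracting homotopy, so $\mathbb{U}_{\op O}(B_\bullet^{\mathbb{K}_{\op O}}(\op M))$ is a simplicial homotopy equivalence onto the constant object $\mathbb{U}_{\op O}(\op M)$. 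Since weak equivalences and the realization functor on $\Mod{\op O}$ are detected objectwise (via $\mathbb{U}_{\op O}$, which is a levelwise evaluation up to the $\Sigma_n$-free structure), realizing the extra-degenerate simplicial object yields a weak equivalence $|\mathbb{U}_{\op O}B_\bullet^{\mathbb{K}_{\op O}}(\op M)| \xrightarrow{\sim} \mathbb{U}_{\op O}(\op M)$, and I would then argue that this descends to the statement that the augmentation $|B_\bullet^{\mathbb{K}_{\op O}}(\op M)| \to \op M$ is a weak equivalence in $\Mod{\op O}$, since weak equivalences there are objectwise and $\mathbb{U}_{\op O}$ is conservative enough to detect them.

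Third, to conclude that $|B_\bullet^{\mathbb{K}_{\op O}}(\op M)|$ is itself cofibrant—not merely levelwise cofibrant—I would invoke that the realization of a Reedy-cofibrant simplicial object in a simplicial model category is cofibrant. So it remains to check Reedy cofibrancy of $B_\bullet^{\mathbb{K}_{\op O}}(\op M)$, i.e.\ that the latching maps $L_n B_\bullet^{\mathbb{K}_{\op O}}(\op M) \to B_n^{\mathbb{K}_{\op O}}(\op M)$ are cofibrations in $\Mod{\op O}$. This follows from the general fact that the comonadic bar construction on a comonad of the form $\mathbb{F}\mathbb{U}$ is Reedy cofibrant whenever $\mathbb{F}$ is left Quillen and its source has the property that all objects are cofibrant: the latching object is built by applying $\mathbb{F}_{\op O}$ to a corresponding latching construction among sequences, where every inclusion is a cofibration since every sequence in $\cat{sSet}$ is cofibrant and the relevant maps are degeneracy-type monomorphisms.

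The main obstacle I expect is the bookkeeping around Reedy cofibrancy and the extra-degeneracy argument at the level of the \emph{sequential} bar construction rather than the symmetric one—one must be careful that $\mathbb{U}_{\op O}$ remembers sequences, not symmetric sequences, so the contracting homotopy lives upstairs among sequences and one must verify it is compatible with the $\Sigma_n$-equivariant structure after applying $\mathbb{F}_{\op O}$. Once the extra degeneracy and the levelwise/latching cofibrancy are in hand, the conclusion is formal from the axioms of a simplicial model category (properness, established in Proposition \ref{prop:module model cat}, is what guarantees realization behaves well). I would organize the write-up so that these two ingredients—extra degeneracy for the weak equivalence, Reedy cofibrancy for cofibrancy—are isolated as the two things to check, each a short verification given the monadicity of $\op O$-modules over sequences.
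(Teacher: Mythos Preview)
Your proposal is correct and follows essentially the same two-step strategy as the paper's proof: an extra-degeneracy argument for the weak equivalence (the paper cites \cite[Prop.~3.13]{JohnsonNoel:LHTAMSM}) and Reedy cofibrancy of $B_\bullet^{\mathbb{K}_{\op O}}(\op M)$ for cofibrancy of the realization (the paper invokes the $\cat\Delta_0^{op}$-restriction trick and \cite[Props.~3.17, 3.22]{JohnsonNoel:LHTAMSM}). The only minor slip is your attribution of the good behavior of realization to properness---what you actually need from Proposition~\ref{prop:module model cat} is the $\cat{sSet}$-model structure (SM7), not properness.
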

\begin{proof}That the map in question is a weak equivalence is an immediate consequence of \cite[Prop.~3.13]{JohnsonNoel:LHTAMSM}, since $\mathbb{U}_{\op O}$ preserves colimits and since $\Mod{\op O}$ carries the projective model structure so that \cite[Def.~3.1]{JohnsonNoel:LHTAMSM} applies, i.e., the monad $\mathbb{U}_{\op O}\mathbb{F}_{\op O}$ is simplicial Quillen. 

To see that $B^{\op O}_{\cat{N}}(\op M)$ is a cofibrant $\op O$-module, it suffices to show that $B_\bullet^{\mathbb K_{\op O}}(\op M)$ is a Reedy cofibrant simplicial $\op O$-module. For this, we proceed, as in \cite[Sec. 3.3]{JohnsonNoel:LHTAMSM}, by noting that the restriction of $B^{\op O}_{\cat{N}}(\op M)$ to the wide subcategory $\cat\Delta_0^{op}\subseteq \cat\Delta^{op}$ of the order-preserving functions preserving the minimal element is isomorphic to a diagram of the form $\mathbb{F}_{\op O}(X_\bullet)$, where $X_\bullet:\cat\Delta^{op}_0\to \cat{sSet}$ is given on objects by $X_n=\mathbb{U}_{\op O}\mathbb{K}_{\op O}^{\circ n}(\op M)$. This diagram satisfies the hypotheses of \cite[Prop.~3.22]{JohnsonNoel:LHTAMSM}, so the desired conclusion follows by applying \cite[Prop.~3.17]{JohnsonNoel:LHTAMSM} to $B_\bullet^{\mathbb K_{\op O}}(\op M)$.
\end{proof}

\subsection{Tensor products of operads and modules}\label{sec:bvt} In this section, we take $\cat V=\cat{sSet}$. The Boardman--Vogt tensor product of simplicial operads, denoted here by $\star$, codifies interchanging algebraic structures \cite{boardman-vogt:lnm}. That is, for all $\op O, \op P\in \cat {Op}$, a $(\op O\star \op P)$-algebra can be viewed as a $\op O$-algebra in the category of $\op P$-algebras or as a $\op P$-algebra in the category of $\op O$-algebras. (We provide this equivalent description of $\op O\star \op P$ for readers familiar with algebras over operads, but do not define the term ``algebra'' here, since we shall have no use for it.)

\begin{definition}[\label{definition:bv-op}\cite{boardman-vogt:lnm}] The \emph{Boardman--Vogt tensor product} of simplicial operads $\op O$ and $\op P$ is the simplicial operad $\op O\bvt \op P$ given by the quotient of the coproduct of $\op O$ and $\op P$ in $\cat{Op}$ by the equivalence relation generated by
\begin{equation}\label{eqn:bv}
\left(o, \left(\pi_i^*(p)\right)_{i\in I}\right)\sim \left(p, \left(\bar\pi_j^*(o)\right)_{j\in J}\right)
\end{equation}
for all $o\in \op O(I)$, $p\in \op P(J)$, and $I,J\in \cat F$.
\end{definition}

\begin{notation} Let $o\star p$ denote the equivalence class of $\left(o, \left(\pi_i^*(p)\right)_{i\in I}\right)$ and $\left(p, \left(\bar\pi_j^*(o)\right)_{j\in J}\right)$ in $(\op O\bvt \op P)(I\times J)$.
\end{notation}

Note that, in light of Remark \ref{rmk:comp-prod}, the lefthand side of (\ref{eqn:bv}) is an element of $(\op O\circ \op P)(I\times J)$ and the righthand side an element of $(\op P\circ \op O)(I\times J)$. To make sense of this definition, we use that $\op O\circ \op P$ and $\op P\circ \op O$ are both summands (modulo identification of identity elements) of the coproduct in $\cat {Op}$ of $\op O$ and $\op P$.

We now explain how to lift the Boardman--Vogt tensor product from simplicial operads to operadic modules. For concreteness, we consider only modules in $\cat{sSet}$, but our framework may be adapted with ease to include other simplicial targets equipped with suitable monoidal structures.

Write $\nu:\cat{F} \times \cat F \to \cat F$ for the Cartesian product of finite sets, and fix a subcategory $\F'\subseteq \F$ closed under finite products. Recall that, for a simplicial operad $\op O$ and finite sets $I$ and $J$, a $p$-simplex of $\map_{\F'(\op O)}(I,J)$ is a pair $\big(f, (o_{j})_{j\in J}\big)$, where $f: I\to J$ is an arrow in $\cat F'$ and $o_{j}$ is a $p$-simplex of $\op O\big(f^{-1}(j)\big)$. If $\op P$ is another operad, there is a simplicial functor
\[\mu: \F'(\op O) \times \F' (\op P) \to \F' (\op O\bvt \op P)\] covering $\nu$ and natural in $\op O$ and $\op P$. Explicitly, $\mu$ is defined on objects by $\mu(I,I')= I\times I'$ and on simplicial hom sets as the map
\begin{align*}\map_{\F'(\op O)}(I,J)\times \map_{\F'(\op P)}(I',J') &\to \map_{\F'(\op O\bvt \op P)}(I\times I', J\times J')\\
\big( \big(f, (o_{j})_{j\in J}\big), \big(g, (p_{j'})_{j'\in J'}\big)\Big)&\mapsto \Big( f\times g, (o_j\star p_{j'})_{(j,j')\in J\times J'}\Big).
\end{align*}
To see that $\mu$ is indeed a functor, note that the construction $(o,p)\mapsto o\star p$ determines a map from the matrix product of the underlying symmetric sequences of $\op O$ and $\op P$ to $\op O\star\op P$.  The Boardman-Vogt tensor product is define precisely so that this is a map of operads in each variable separately, from which follows easily that $\mu$ is functor.

The following definition is a mild generalization of the one given in \cite{DwyerHessKnudsen:CSP} following \cite{dwyer-hess:bv}. Our notation here, which differs from those references, is chosen to avoid confusion with various other tensor products of interest.

\begin{definition}\label{def:bv tensor} Let $\op O$ and $\op P$ be simplicial operads and $\F'\subseteq \F$ a subcategory closed under finite products. For $\op M\in \cat{sSet}^{\F'(\op O)^{\mathrm{op}}}$  and $\op N\in \cat{sSet}^{\F'(\op P)^{\mathrm{op}}}$, the \emph{Boardman--Vogt tensor product} of $\op M$ and $\op N$ is the enriched left Kan extension in the diagram of simplicial categories
\[\xymatrix{
\F'(\op O)^{op}\times\F'(\op P)^{\mathrm{op}}\ar[d]_\mu\ar[r]^-{\op M\times\op N}& \cat{sSet}\times\cat{sSet}\ar[rr]^-{-\times -}&&\cat{sSet}\\
\F'(\op O\bvt\op P)^{op}\ar@{-->}[urrr]_-{\op M\bvt\op N}
}\]
\end{definition}

In other words, $\op M\bvt \op N=\mu_!(\op M\boxtimes \op N)$, where 
$$-\boxtimes -: \cat{sSet}^{\F'(\op O)^{\mathrm{op}}}\times \cat{sSet}^{\F'(\op P)^{\mathrm{op}}} \to \cat{sSet}^{\F'(\op O)^{\mathrm{op}}\times \F'(\op P)^{\mathrm{op}}}$$
is the external product, specified by $(\op M\boxtimes \op N)(I,J)=\op M(I) \times \op M(J)$.

\begin{example}\label{ex:bvt-seq} When $\F'=\F_{\text{disc}}$, so that $\cat{sSet}^{\F'(\op O)^{\mathrm{op}}}$ is equivalent to the category of sequences in $\cat{sSet}$, the Boardman--Vogt tensor product takes a particularly simple form.  If $\op X$ and $\op Y$ are sequences, then
$$(\op X \bvt \op Y)(n)\cong\coprod_{lm=n} \op X(l)\times \op Y(m).$$ 
\end{example}

The Boardman--Vogt tensor product of presheaves is natural in the operad coordinate and in the $\F'$ coordinate, in the following sense.

\begin{lemma}\label{lem:nat-bv} Let $\F'$ be a subcategory of $\F$ that is closed under finite products. Let $\vp: \op O \to \op O'$ and $\psi:\op P \to \op P'$ be morphisms of simplicial operads. For all $\op M\in \cat {sSet} ^{\F'(\op O)^{\mathrm{op}}}$  and $\op N\in \cat {sSet} ^{\F'(\op P)^{\mathrm{op}}}$, there is a natural isomorphism
$$(\vp\bvt \psi)_!(\op M\bvt \op N) \cong \vp_!(\op M) \bvt \psi_!(\op N)$$
in $\cat {sSet}^{\F' (\op O' \bvt \op P')^{\mathrm{op}}}$.  Moreover, if $\iota: \F' \hookrightarrow \F$ denotes the inclusion functor, then there is a natural isomorphism
$$\iota_!(\op M\bvt \op N) \cong \iota_!(\op M) \bvt \iota_!(\op N)$$
in $\Mod {\op O\bvt \op P}$.
\end{lemma}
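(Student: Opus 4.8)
The plan is to deduce both isomorphisms from two formal properties of enriched (simplicial) left Kan extension. The first is \emph{pseudofunctoriality}: for composable simplicial functors $F$ and $G$ there is a natural isomorphism $\lan_{G}\lan_{F}\cong\lan_{GF}$. The second is a \emph{Fubini identity}: for simplicial functors $\vp\colon\C\to\C'$ and $\psi\colon\D\to\D'$ and simplicial presheaves $\op M$ on $\C$ and $\op N$ on $\D$, there is a natural isomorphism $(\vp\times\psi)_!(\op M\boxtimes\op N)\cong\vp_!\op M\boxtimes\psi_!\op N$ in $\cat{sSet}^{\C'\times\D'}$. I would prove the latter by expressing the enriched left Kan extension as the coend $(\vp_!\op M)(c')=\int^{c}\C'(\vp c,c')\times\op M(c)$, factoring the hom-object of a product category as a product, and invoking the facts that a coend over a product category is an iterated coend and that $X\times(-)$ preserves colimits in $\cat{sSet}$ (since $\cat{sSet}$ is Cartesian closed) to split the resulting coend over $\C\times\D$ into a product of coends over $\C$ and over $\D$.

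For the first isomorphism, I would observe that, since the Boardman--Vogt tensor product is a bifunctor on $\cat{Op}$ and $\mu$ is natural in the operad variables as recalled above, the square of simplicial functors
\[\xymatrix{
\F'(\op O)^{op}\times\F'(\op P)^{op}\ar[r]^-{\mu}\ar[d]_-{\vp\times\psi} & \F'(\op O\bvt\op P)^{op}\ar[d]^-{\vp\bvt\psi}\\
\F'(\op O')^{op}\times\F'(\op P')^{op}\ar[r]^-{\mu} & \F'(\op O'\bvt\op P')^{op}
}\]
commutes strictly, the content of this being the identity $(\vp\bvt\psi)(o\star p)=\vp(o)\star\psi(p)$. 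Combining the two formal properties above with Definition~\ref{def:bv tensor} then yields the chain of natural isomorphisms
\[(\vp\bvt\psi)_!(\op M\bvt\op N)\cong(\vp\bvt\psi)_!\mu_!(\op M\boxtimes\op N)\cong\mu_!(\vp\times\psi)_!(\op M\boxtimes\op N)\cong\mu_!\bigl(\vp_!\op M\boxtimes\psi_!\op N\bigr)\cong\vp_!\op M\bvt\psi_!\op N.\]

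For the second isomorphism I would run the identical argument with the square above replaced by
\[\xymatrix{
\F'(\op O)^{op}\times\F'(\op P)^{op}\ar[r]^-{\mu}\ar[d]_-{\iota\times\iota} & \F'(\op O\bvt\op P)^{op}\ar[d]^-{\iota}\\
\F(\op O)^{op}\times\F(\op P)^{op}\ar[r]^-{\mu} & \F(\op O\bvt\op P)^{op}
}\]
which commutes because $\mu$ is given by the same formula on hom-objects whether $\F'$ or $\F$ is used and because, for fixed $o$ and $p$, the element $o\star p$ lies in $(\op O\bvt\op P)(I\times J)$ independently of the ambient subcategory. I expect the only genuine work to lie in verifying that these two squares commute on the nose --- in particular in pinning down the action of $\vp\bvt\psi$ on the classes $o\star p$, which follows directly from the presentation of the Boardman--Vogt tensor product as a quotient of a coproduct in $\cat{Op}$ --- and in the coend bookkeeping behind the Fubini identity; since $\cat{sSet}$ is Cartesian closed, the interchange of Cartesian products and colimits needed there is automatic, so I do not anticipate any conceptual obstacle.
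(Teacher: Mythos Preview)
Your proposal is correct and matches the paper's proof essentially step for step: the paper too invokes naturality of $\mu$ to get the commuting square of enriched categories (yielding $(\vp\bvt\psi)_!\mu_!\cong\mu_!(\vp\times\psi)_!$ by pseudofunctoriality of Kan extension), then uses Cartesian closedness of $\cat{sSet}$ and the colimit description of left Kan extension to establish the Fubini identity $(\vp\times\psi)_!(\op M\boxtimes\op N)\cong\vp_!\op M\boxtimes\psi_!\op N$, and declares the $\iota$ case analogous. Your coend formulation of the Fubini step is a bit more explicit than the paper's, but the content is the same.
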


\begin{proof} Naturality of $\mu$ implies that the diagram
\[\xymatrix{\cat{sSet}^{\F'(\op O)^{\mathrm{op}}\times \F'(\op P)^{\mathrm{op}}} \ar [r]^(0.55){\mu_!} \ar[d]_{(\vp\times \psi)_!} & \cat{sSet}^{\F' (\op O \bvt \op P)^{\mathrm{op}}}\ar[d]^{(\vp\bvt \psi)_!}\\
\cat{sSet}^{\F'(\op O')^{\mathrm{op}}\times \F'(\op P')^{\mathrm{op}}} \ar [r]^(0.55){\mu_!}& \cat{sSet}^{\F' (\op O' \bvt \op P')^{\mathrm{op}}}}\]
commutes. Since formation of the Cartesian product with a fixed simplicial set preserves colimits, it follows from the colimit description of left Kan extensions that the diagram
\[\xymatrix{\cat{sSet}^{\F'(\op O)^{\mathrm{op}}}\times \cat{sSet}^{\F'(\op P)^{\mathrm{op}}} \ar [rr]^(0.5){-\boxtimes -} \ar[d]_{\vp_!\times \psi_!}&&\cat{sSet}^{\F'(\op O)^{\mathrm{op}}\times \F'(\op P)^{\mathrm{op}}} \ar[d]_{(\vp\times \psi)_!}\\
\cat{sSet}^{\F'(\op O')^{\mathrm{op}}}\times \cat{sSet}^{\F'(\op P')^{\mathrm{op}}} \ar [rr]^(0.5){-\boxtimes -} &&\cat{sSet}^{\F'(\op O')^{\mathrm{op}}\times \F'(\op P')^{\mathrm{op}}}}\]
also commutes, concluding the proof of the first isomorphism. We omit the argument for the second isomorphism, which is very similar.
\end{proof}

\begin{example}Applied to the unit maps $\eta_{\op O}:\op J \to \op O$ and $\eta_{\op P}:\op J \to \op P$, Lemma \ref{lem:nat-bv} implies that 
\[(\eta_{\op O})_!(\op X) \bvt (\eta_{\op P})_!(\op Y) \cong (\eta_{\op O\bvt \op P})_!(\op X\bvt \op Y)\] for all symmetric sequences $\op X$ and $\op Y$,
i.e., the Boardman--Vogt tensor product of a free $\op O$-module and a free $\op P$-module is the free $\op O\bvt \op P$-module on the Boardman--Vogt tensor product of the generating symmetric sequences.  
This isomorphism was first established in \cite{dwyer-hess:bv}, where the Boardman--Vogt tensor product of symmetric sequences was called the \emph{matrix monoidal product} and denoted $\Box$. 
\end{example}

\begin{example}\label{ex:bvt}
In the case $\F'= \cat N$ and $\op O=\op J =\op P$, Lemma \ref{lem:nat-bv} implies that for all seqences $\op X$ and $\op Y$, we have a natural isomorphism
\[\iota_!(\op X) \bvt \iota _!(\op Y) \cong \iota_!(\op X \bvt \op Y)\]
of symmetric sequences (we use that $\op J\bvt \op J\cong \op J$). Thus, applying Example \ref{ex:bvt-seq}, we have the natural isomorphism of $\op O\bvt \op P$-modules
\[\mathbb F_{\op O}(\op X) \bvt \mathbb F_{\op P}(\op Y) \cong \mathbb F_{\op O\bvt \op P}(\op X\bvt \op Y)\cong\mathbb{F}_{\op O\bvt \op P}\left(n\mapsto \coprod_{lm=n} \op X(l)\times \op Y(m)\right).\]
\end{example}

The lifted Boardman--Vogt tensor product behaves well with respect to colimits and cofibrations.

\begin{lemma}[{\cite[Lem. 3.2, Prop. 3.12]{DwyerHessKnudsen:CSP}}]\label{lem:bvt-leftquillen} Let $\op O$ and $\op P$ be simplicial operads.   
The functors 
\[\op M\bvt -: \Mod{\op P} \to \Mod{\op O\bvt \op P}\] and \[-\bvt \op N: \Mod{\op O} \to \Mod{\op O\bvt \op P}\]
are left adjoints
for any $\op O$-module $\op M$ and $\op P$-module $\op N$. If $\op M$ (resp. $\op N$) is cofibrant, then the left adjoint is a left Quillen functor.
\end{lemma}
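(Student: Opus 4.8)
The statement splits into two claims: that $\op M \bvt -$ and $- \bvt \op N$ are left adjoints, and that they are left Quillen when $\op M$ (resp.\ $\op N$) is cofibrant. I will treat $-\bvt \op N$; the other case is symmetric. For the adjointness, the plan is to unwind Definition \ref{def:bv tensor}: we have $\op M \bvt \op N = \mu_!(\op M \boxtimes \op N)$, and $\mu_!$ is a left Kan extension, hence a left adjoint on the whole functor category $\cat{sSet}^{\F'(\op O)^{\mathrm{op}} \times \F'(\op P)^{\mathrm{op}}}$. The external product $-\boxtimes \op N \colon \cat{sSet}^{\F'(\op O)^{\mathrm{op}}} \to \cat{sSet}^{\F'(\op O)^{\mathrm{op}} \times \F'(\op P)^{\mathrm{op}}}$, for fixed $\op N$, is defined pointwise by $(\op M \boxtimes \op N)(I,J) = \op M(I) \times \op N(J)$, and since Cartesian product with a fixed simplicial set $\op N(J)$ is a left adjoint in $\cat{sSet}$ (the right adjoint being the internal hom $\op N(J)^{(-)}$), the functor $-\boxtimes \op N$ is a left adjoint, computed objectwise. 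Composing the two left adjoints $\mu_!$ and $-\boxtimes \op N$ gives that $-\bvt \op N$ is a left adjoint, which takes care of the first assertion (and then $\iota_!$ is also a left adjoint, giving the statement for honest $\op O \bvt \op P$-modules when $\F' = \F$; for general $\F'$ one applies $\iota_!$ at the end).

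For the Quillen claim, by Proposition \ref{prop:module model cat} the categories $\Mod{\op O}$, $\Mod{\op P}$, and $\Mod{\op O \bvt \op P}$ all carry projective model structures, where cofibrations are generated by the images under the relevant free functors of the generating (trivial) cofibrations of $\cat{sSet}$. To show $-\bvt \op N$ is left Quillen when $\op N$ is cofibrant, it suffices to check that it sends generating cofibrations and generating trivial cofibrations of $\Mod{\op O}$ to cofibrations and trivial cofibrations of $\Mod{\op O \bvt \op P}$. The generating (trivial) cofibrations of $\Mod{\op O}$ have the form $\eta_!\iota_!(\partial \Delta^n \hookrightarrow \Delta^n)$ placed in a single arity $k$ — i.e.\ $\mathbb F_{\op O}$ applied to a sequence concentrated in degree $k$. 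Here the key input is the formula of Example \ref{ex:bvt}: $\mathbb F_{\op O}(\op X) \bvt \mathbb F_{\op P}(\op Y) \cong \mathbb F_{\op O \bvt \op P}(\op X \bvt \op Y)$. I would combine this with a cell-attachment / filtration argument on a cofibrant $\op N$: writing $\op N$ as a retract of a cellular $\op O'$-module built by attaching cells of the form $\mathbb F_{\op P}(\text{arity-}k\text{ sphere})$, and using that $-\bvt \op N$ preserves colimits (being a left adjoint) and preserves the relevant pushouts, one reduces to checking the generating (trivial) cofibrations against such cells; and against a single generating cell $\mathbb F_{\op P}(\op Y)$ the map $\mathbb F_{\op O}(\partial \Delta^n \cdot \Sigma_k) \bvt \mathbb F_{\op P}(\op Y) \to \mathbb F_{\op O}(\Delta^n \cdot \Sigma_k) \bvt \mathbb F_{\op P}(\op Y)$ becomes, by Example \ref{ex:bvt}, the free $\op O \bvt \op P$-module on an explicit (trivial) cofibration of sequences $\big(\partial\Delta^n \cdot \Sigma_k\big) \bvt \op Y \to \big(\Delta^n \cdot \Sigma_k\big)\bvt \op Y$ (using the coproduct formula of Example \ref{ex:bvt-seq}), which is a (trivial) cofibration of sequences because Cartesian product with a fixed simplicial set preserves cofibrations and weak equivalences in $\cat{sSet}$. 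Since $\mathbb F_{\op O \bvt \op P}$ is left Quillen, its image is a (trivial) cofibration.

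Alternatively, and more cleanly, I would appeal directly to \cite[Lem.~3.2, Prop.~3.12]{DwyerHessKnudsen:CSP}, which is cited in the statement of the lemma itself: the content there is exactly these two assertions (for the case $\F' = \F$), so the "proof" is essentially a pointer, with the only new observation being that the adjointness and Quillen properties are preserved under post-composition with the left Quillen functor $\iota_!$ to pass from general $\F'$ to honest modules, and that the argument is insensitive to the choice of $\F'$ since all the model-categorical inputs (generating cofibrations as free modules on arity-concentrated sequences, the formula of Example \ref{ex:bvt}) hold verbatim.

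\textbf{Main obstacle.} The one genuinely non-formal point is verifying that $-\bvt \op N$ preserves enough pushouts/colimits to run the cell-induction — more precisely, that although $-\bvt \op N$ is a left adjoint (hence preserves all colimits) on the category of $\op O$-modules, the colimit of a cellular presentation of $\op N$ taken on the $\op P$-side interacts correctly, via Lemma \ref{lem:nat-bv} and the bilinearity of $\star$, with the tensor product; i.e.\ one must know that $- \bvt -$ is suitably "biclosed" so that attaching a cell to $\op N$ corresponds to a pushout of $\op O \bvt \op P$-modules after tensoring. This is the step where one actually uses the structure of the Boardman--Vogt tensor product (the map from the matrix product being a map of operads in each variable separately), rather than purely categorical generalities, and it is where I would expect to spend the most care.
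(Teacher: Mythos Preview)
The paper gives no proof at all for this lemma: it is stated with the citation \cite[Lem.~3.2, Prop.~3.12]{DwyerHessKnudsen:CSP} in the header and nothing further. Your ``alternative'' of simply appealing to that reference is therefore exactly the paper's approach.

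Your detailed sketch (adjointness as a composite $\mu_! \circ (-\boxtimes \op N)$ of left adjoints; left Quillen via cell induction using Example~\ref{ex:bvt} to reduce to free modules) is a reasonable outline of how the cited results are proved, and goes beyond what this paper supplies. The adjointness part is entirely sound. For the Quillen part, your identified ``main obstacle''---that one must know $-\bvt-$ preserves colimits in each variable separately so as to run a cell induction on $\op N$---is indeed the substantive point, but note that it is already resolved by the first half of the lemma: once you know $\op M \bvt -$ is a left adjoint for \emph{every} $\op M$, you have bilinearity of $\bvt$ with respect to colimits, and the cell induction goes through. So the obstacle is less severe than you suggest; the two halves of the statement are not independent, and the second leans on the first.
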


\begin{remark} As Emily Riehl pointed out to the authors, it is likely that \[-\bvt - : \Mod{\op O}\times \Mod{\op P} \to \Mod{\op O\bvt \op P}\] is actually a Quillen bifunctor, in particular because the Boardman--Vogt tensor product of free modules is free.  Since we do not need this stronger result here, we leave its proof to the curious reader. 
\end{remark}

By Lemma \ref{lem:bvt-leftquillen}, it makes sense to speak of the \emph{derived Boardman--Vogt tensor product} of an $\op O$-module $\op M$ and a $\op P$-module $\op N$. By Lemma \ref{lem:bar cofibrant}, this derived tensor product is computed as
\[\op M\bvt^{\mathbb L} \op N = B_{\cat N}^{\op O}(\op M) \bvt B_{\cat N}^{\op P}(\op N).\]

\section{Linear modules}\label{sec:linear}

Fix a commutative, unital ring $R$. Throughout this section and the following two sections, homology is implicitly with $R$-coefficients, and tensor products are implicitly over $R$. We use the terminology $R$-\emph{flat} to refer to objects built from simplicial sets---simplicial operads or modules over them, for example---with $R$-flat homology.  We let $\grMod R$ denote the category of $\mathbb{Z}$-graded $R$-modules, which is a closed symmetric monoidal category with the respect to the graded tensor product.

Recall that the functor $H_*:\cat{sSet}\to \grMod{R}$ is lax monoidal; in particular, for every pair of simplicial sets $K$ and $L$, there is a natural map $H_*(K)\otimes H_*(L) \to H_*(K\times L)$, which is an isomorphism if $K$ or $L$ is $R$-flat. It follows that, if $\op O$ is a simplicial operad, then $H_*(\op O)$ is a operad in $\grMod R$.

\subsection{Linearization}

\begin{definition}
Let $\cat C$ be a simplicial category. The $R$-\emph{linearization} of $\cat C$ is the $\grMod{R}$-enriched category $\cat C_R$ where 
\begin{enumerate}
\item $\ob \cat C_R=\cat C$,
\item $\Hom_{\cat C_R}(X,Y)= H_*\big(\Hom_{\cat C}(X,Y)\big)$ for all objects $X$ and $Y$, and
\item composition is given by the composites of the form 
\[H_*\big (\Hom_{\cat C}(X,Y)\big)\otimes H_*\big(\Hom_{\cat C}(Y,Z)\big)\to H_*\big(\Hom_{\cat C}(X,Y)\times \Hom_{\cat C}(Y,Z)\big)\to H_*\big(\Hom_{\cat C}(X,Z)\big),\] 
where the first map is obtained from the lax monoidal structure of $H_*$, and the second from composition in $\cat C$.
\end{enumerate}
\end{definition}

This construction is functorial in the sense that a simplicial functor $\vp:\cat C \to \cat D$ induces a $\grMod{R}$-functor $\vp_R:\cat C_R \to \cat D_R$ coinciding with $\vp$ on objects.  In particular, a map $\vp: \op O\to \op P$ of simplicial operads induces a $\grMod{R}$-functor $\vp_R:\F'(\op O)_R \to \F'(\op P)_R$ for every subcategory $\F'\subseteq \F$. The $R$-linearization functor is lax monoidal in the sense that there is an enriched comparison functor 
\[\nabla: \cat C_R\otimes \cat D_R\to (\cat C\times\cat D)_R,\] where the objects  of $C_R\otimes \cat D_R$ are those of their Cartesian product, and the hom objects are the tensor products of the hom objects of the factors. This comparison functor $\nabla$ is often an isomorphism, e.g., whenever $\cat C$ or $\cat D$ has $R$-flat simplicial mapping spaces.

\begin{definition}
Let $\op O$ be a simplicial operad. An \emph{$R$-linear (right) $\op O$-module} is a $\grMod{R}$-functor $\F(\op O)_R^{op}\to \grMod{R}$.
\end{definition}

We let  $\Mod{\op O_R}$ denote $\grMod{R}^{\F(\op O)_R^{op}}$, the $\grMod{R}$-category of $R$-linear $\op O$-modules.

\begin{remark}
An obvious modification of this definition leads to a notion of $R$-linear $\op O$-module in any $\grMod{R}$-category $\cat C$. We shall have no use for such generality.
\end{remark}

As in the non-linear case, we have a functor that produces ``totally free" modules.

\begin{notation} Let $\iota : \cat N(\op J) \hookrightarrow \F (\op J)$ denote the inclusion functor of the discrete subcategory.  For any simplicial operad $\op O$ with unit map $\eta$, we let 
\[\mathbb F_{\op O_R}=(\eta_R)_!\circ (\iota_R)_!: \grMod{R}^{\cat N(\op J)^{\mathrm{op}}}\to \Mod{\op O_R},\]
while $\mathbb U_{\op O_R}$ denotes the corresponding forgetful functor and $\mathbb{K}_{\op O_R}$ the resulting comonad.
\end{notation}

Note that, contrary to what the notation might suggest, there is in general no operad called $\op O_R$. On the other hand, there is an operad in graded $R$-modules associated to $\op O$, namely $H_*(\op O)$, and the lax monoidal structure of $H_*$ supplies a canonical $\grMod{R}$-functor \[\xi:\F(H_*(\op O))\to \F(\op O)_R,\] 
that is the identity on objects and natural in $\op O$.

The easy proof of the next lemma is left to the reader. 

\begin{lemma}\label{lem:flat F}
Let $\op O$ be a simplicial operad. If $\op O$ is $R$-flat, then the canonical functor $\xi:\F(H_*(\op O))\to \F(\op O)_R$ is an isomorphism of $\grMod{R}$-categories.
\end{lemma}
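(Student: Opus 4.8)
The claim is that if $\op O$ is $R$-flat then $\xi\colon \F(H_*(\op O))\to \F(\op O)_R$ is an isomorphism of $\grMod{R}$-categories. Since $\xi$ is the identity on objects by construction, I only need to check that it is an isomorphism on every hom object. So fix finite sets $I$ and $J$ and compare
\[
\Hom_{\F(H_*(\op O))}(I,J)=\coprod_{f\in \F(I,J)}\bigotimes_{j\in J} H_*(\op O)\big(f^{-1}(j)\big)
\qquad\text{with}\qquad
\Hom_{\F(\op O)_R}(I,J)=H_*\Big(\coprod_{f\in\F(I,J)}\prod_{j\in J}\op O\big(f^{-1}(j)\big)\Big).
\]
The map $\xi$ on these hom objects is, by definition, assembled from the lax monoidal comparison maps for $H_*$.

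**Key steps, in order.** First I would commute $H_*$ past the coproduct over $f\in\F(I,J)$: since $\F(I,J)$ is a finite set, the canonical map $\bigoplus_f H_*(\prod_{j\in J}\op O(f^{-1}(j)))\to H_*(\coprod_f \prod_{j\in J}\op O(f^{-1}(j)))$ is an isomorphism (homology commutes with finite, indeed arbitrary, coproducts of simplicial sets — this needs no flatness). This reduces the problem, summand by summand, to showing that the lax monoidal structure map
\[
\bigotimes_{j\in J} H_*\big(\op O(f^{-1}(j))\big)\ \longrightarrow\ H_*\Big(\prod_{j\in J}\op O(f^{-1}(j))\Big)
\]
is an isomorphism. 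Second, I would invoke the statement recalled at the start of Section \ref{sec:linear}: the Künneth map $H_*(K)\otimes H_*(L)\to H_*(K\times L)$ is an isomorphism whenever $K$ or $L$ is $R$-flat. Applying this iteratively over the finite index set $J$ — at each stage one of the two factors, namely $H_*(\op O(f^{-1}(j)))$, is $R$-flat because $\op O$ is $R$-flat by hypothesis, so every partial product $\prod_{j\in J'}\op O(f^{-1}(j))$ also has $R$-flat homology and the induction goes through — gives that the displayed map is an isomorphism for every $f$. Third, I would note that compatibility with composition is automatic: both $\xi$ and the composition laws on the two sides are built from exactly the same data (the lax monoidal transformation for $H_*$ and the operad/category structure of $\op O$), so once $\xi$ is a bijection on hom objects it is by construction a $\grMod{R}$-functor, hence an isomorphism of enriched categories.

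**Main obstacle.** There is really no deep obstacle here — this is the "easy proof left to the reader" — but the one point requiring a little care is the bookkeeping in the iterated Künneth argument: one must make sure that at each step of the induction over $J$ the flatness hypothesis still applies, i.e. that finite products of $R$-flat-homology spaces again have $R$-flat homology, so that the Künneth isomorphism can be applied with the correct factor playing the role of the "flat" one. This is exactly where the $R$-flatness hypothesis on $\op O$ is used, and it is the only place; everything else (commuting $H_*$ past finite coproducts, compatibility with composition) is formal. One should also remember the empty case $J=\varnothing$, where both sides reduce to $R$ concentrated in degree $0$ and $\xi$ is the identity.
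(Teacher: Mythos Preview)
Your argument is correct and is precisely the routine verification the authors had in mind; the paper does not supply a proof, stating only that ``the easy proof of the next lemma is left to the reader,'' and your Künneth-plus-coproduct unpacking is the natural way to fill this in. One minor simplification: in the inductive Künneth step you do not actually need the partial product to have $R$-flat homology, since at each stage the \emph{new} single factor $\op O(f^{-1}(j))$ is $R$-flat and that alone suffices for the two-variable Künneth isomorphism; your stronger observation is true but unnecessary.
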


Thus, in the $R$-flat case, the category $\Mod{\op O_R}$ coincides with the category of modules for $H_*(\op O)$, interpreted as an operad in $\grMod{R}$.

\begin{definition}
Let $\op O$ be a simplicial operad and $\op M$ an $\op O$-module. The $R$-\emph{linearization} of $\op M$ is the object $H_*(\op M)$ in $\Mod{\op O_R}$ given by the composite $R$-linear functor \[\F(\op O)_R^{op}\xrightarrow{\op M_R} \S_R\xrightarrow{H_*} \grMod{R}.\]
\end{definition}

Note that the first functor in the composition above involves applying homology to the simplicial hom sets of the categories in question, while the second is given by applying homology to the objects of the second category, which are themselves simplicial sets.

In the obvious way, $R$-linearization of modules extends to a functor, which is natural with respect to base change; that is, for every morphism of simplicial operads $\vp: \op O\to \op P$, the diagram
\[\xymatrix{
\Mod{\op P}\ar[d]_-{\vp^*}\ar@{->}[r]^-{H_*}& \Mod{\op P_R}\ar[d]^-{\vp_R^*}\\
\Mod{\op O}\ar[r]^-{H_*}&\Mod{\op O_R}
}\]
commutes. With flatness assumptions, $R$-linearization is also compatible with the ``extension of scalars" functor $\vp_!$. We  need only the following rudimentary case if this compatibility.

\begin{lemma}\label{lem:free module homology}
Let $\op O$ be a simplicial operad and $\op X$ a simplicial sequence. There is a natural transformation $\mathbb{F}_{\op O_R}(H_*(\op X))\to H_*(\mathbb{F}_{\op O}(\op X))$ that is an isomorphism provided either $\op O$ or $\op X$ is $R$-flat.
\end{lemma}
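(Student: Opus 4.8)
The strategy is to unwind both sides as iterated enriched left Kan extensions and compare them term by term using the colimit formula for Kan extension, exploiting the fact that the relevant colimits are coproducts indexed by hom-sets of $\cat N(\op J)$ and $\F(\op J)$, together with the fact that $H_*$ commutes with coproducts and is lax monoidal (strong monoidal on $R$-flat objects). First I would recall that $\mathbb F_{\op O}=\eta_!\circ\iota_!$, so that $\mathbb F_{\op O}(\op X)$ is the $\op O$-module whose value at a finite set $I$ is, by the pointwise formula for $\iota_!$ followed by $\eta_!$, a coproduct of the form $\coprod \op X(-)\times\bigl(\text{hom-simplices of }\F(\op O)\bigr)$; concretely, using that $\iota$ lands in the discrete subcategory and $\eta$ is the unit map, one gets $\mathbb F_{\op O}(\op X)(I)\cong\coprod_{k\ge0}\op X(k)\times_{\Sigma_k}\Hom_{\F(\op O)}(\underline k,I)$, a coproduct of simplicial sets. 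On the linearized side, $\mathbb F_{\op O_R}(H_*\op X)$ is computed by the enriched Kan extension $(\eta_R)_!\circ(\iota_R)_!$ applied to $H_*\op X$, and the enriched pointwise formula gives a colimit in $\grMod R$ of $H_*(\op X(k))\otimes\Hom_{\F(\op O)_R}(\underline k,I)$, which since $\Hom_{\F(\op O)_R}=H_*\Hom_{\F(\op O)}$ unwinds to a coequalizer/coproduct expression built from $H_*(\op X(k))\otimes H_*\bigl(\Hom_{\F(\op O)}(\underline k,I)\bigr)$.

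The natural transformation is then assembled from the lax monoidal comparison maps $H_*(K)\otimes H_*(L)\to H_*(K\times L)$: at each spot of the colimit diagram we have
\[
H_*(\op X(k))\otimes H_*\bigl(\Hom_{\F(\op O)}(\underline k,I)\bigr)\longrightarrow H_*\bigl(\op X(k)\times\Hom_{\F(\op O)}(\underline k,I)\bigr),
\]
and these are compatible with the $\Sigma_k$-actions and the coequalizer relations, hence descend to a map
\[
\mathbb F_{\op O_R}(H_*\op X)(I)=\colim\bigl(H_*(\op X(k))\otimes H_*\Hom_{\F(\op O)}(\underline k,I)\bigr)\longrightarrow H_*\Bigl(\colim\bigl(\op X(k)\times\Hom_{\F(\op O)}(\underline k,I)\bigr)\Bigr)=H_*\bigl(\mathbb F_{\op O}(\op X)(I)\bigr);
\]
here I use that $H_*$ commutes with the colimit on the right because that colimit is (up to the $\Sigma_k$-coequalizer, which $H_*$ preserves as it preserves coproducts and, being additive, reflexive coequalizers of the relevant split type) a coproduct of simplicial sets, and $H_*$ preserves coproducts. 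Naturality in $I$ (i.e.\ compatibility with the $\grMod R$-functor structure over $\F(\op O)_R$) follows from naturality of the lax monoidal structure maps and of composition, and naturality in $\op X$ and $\op O$ is likewise formal. To see it is an isomorphism when $\op O$ or $\op X$ is $R$-flat: in that case each simplicial set $\op X(k)$ or each $\Hom_{\F(\op O)}(\underline k,I)=\coprod_{f}\prod_j\op O(f^{-1}(j))$ is $R$-flat, so the lax monoidal map $H_*(K)\otimes H_*(L)\to H_*(K\times L)$ is an isomorphism at every spot of the diagram (using that finite products of $R$-flat simplicial sets are $R$-flat, and that $H_*$ of an $R$-flat simplicial set is $R$-flat); since both colimits are computed spotwise by the same (co)limit shape and $H_*$ preserves it, the induced map is an isomorphism.

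The main obstacle I anticipate is bookkeeping rather than conceptual: one must check that the comparison maps are genuinely compatible with \emph{all} the structure encoded in the iterated Kan extension---the $\Sigma_k$-coequalizers coming from $\iota_!$, the further coend over $\F(\op J)$ coming from $\eta_!$, and the enrichment over $\grMod R$ versus $\S$---so that the term-by-term lax monoidal maps truly glue to a map of colimits. The cleanest way to organize this is to note that $H_*\colon\S\to\grMod R$ together with $\nabla\colon\cat C_R\otimes\cat D_R\to(\cat C\times\cat D)_R$ assembles into a lax monoidal functor of enriched-category frameworks, so that $H_*$ applied to an enriched left Kan extension receives a canonical comparison from the enriched left Kan extension of the linearized data---this is a general fact about (op)lax monoidal functors and weighted colimits---and the flatness hypothesis is exactly what makes the relevant comparison maps iso. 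I would then simply cite this principle and check the flat case spotwise as above, leaving the purely diagrammatic verification to the reader, consistent with the lemma's billing as having an ``easy proof.''
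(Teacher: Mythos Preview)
Your approach is correct but takes a more explicit route than the paper. The paper constructs the natural transformation in one line via the adjunction: apply $H_*$ to the unit map $\op X \to \mathbb{U}_{\op O}\mathbb{F}_{\op O}(\op X)$ of sequences, observe that $\mathbb{U}_{\op O_R}\circ H_* = H_*\circ \mathbb{U}_{\op O}$ (since $R$-linearization is compatible with restriction), and transpose under $\mathbb{F}_{\op O_R}\dashv \mathbb{U}_{\op O_R}$. The isomorphism claim then reduces immediately to the K\"unneth theorem. You instead unwind both sides via the pointwise Kan extension formula and assemble the map termwise from the lax monoidal comparison maps $H_*(K)\otimes H_*(L)\to H_*(K\times L)$. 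The paper's argument completely bypasses the bookkeeping you anticipate as the main obstacle (compatibility with $\Sigma_k$-actions, coend relations, enriched structure), since the universal property of the free functor packages all of this automatically; your approach, while heavier, makes the comparison map and the flatness check fully explicit at the level of the colimit diagram.

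Two minor corrections to your write-up: first, since $\cat N$ is genuinely discrete, the Kan extension $\mathbb{F}_{\op O}(\op X)(I)$ is a pure coproduct $\coprod_{k}\op X(k)\times \Hom_{\F(\op O)}(I,\underline k)$ with no residual $\Sigma_k$-quotient (the $\times_{\Sigma_k}$ in your formula is redundant once you trace through $\iota_!$ followed by $\eta_!$), so your worry about $H_*$ preserving reflexive coequalizers is moot---only preservation of coproducts is needed. Second, for right modules (contravariant presheaves on $\F(\op O)$) the hom factor should be $\Hom_{\F(\op O)}(I,\underline k)$ rather than $\Hom_{\F(\op O)}(\underline k,I)$.
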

\begin{proof}
The map arises from the universal property of $\mathbb{F}_{\op O_R}$ applied to the map induced on homology by the inclusion of $\op X$ into $\mathbb{U}_{\op O}\mathbb{F}_{\op O}(\op X)$. Either flatness assumption implies that the K\"{u}nneth isomorphism holds.
\end{proof}

\subsection{Homotopy theory of linear operadic modules} We now situate $R$-linear $\op O$-modules in a homotopical context. 

Write $\cat{grCh}_R$ for the category of chain complexes of graded $R$-modules. Explicitly, an object of this category is a bigraded $R$-module $V=\bigoplus_{p,q\in\mathbb{Z}} V_{p,q}$ equipped with a differential that decreases $p$ and preserves $q$. Write $\cat{grCh}_R^{\geq0}$ for the subcategory of chain complexes $V$ such that $V_{p,q}=0$ for $p<0$. Note that we require non-negativity only in the direction of the chain grading. 

A version of the classical Dold--Thom correspondence asserts that the functor of \emph{normalized chains} witnesses an equivalence of categories \[N:\grMod{R}^{\cat\Delta^{op}}\xrightarrow{\simeq} \cat{grCh}_R^{\geq0}.\] An account of this correspondence at a suitable level of generality may be found in \cite[Thm. 1.2.3.7]{lurie2}, for example.   For any simplicial operad $\op O$, this equivalence extends to an equivalence of module categories 
\[N:(\grMod{R}^{\cat\Delta^{op}})^{\F (\op O)_R^{op}}\xrightarrow{\simeq} (\cat{grCh}_R^{\geq0})^{\F (\op O)_R^{op}}.\]
A slight variant of \cite[Example 7.9]{moser} shows that if we equip $\cat{grCh}_R^{\geq0}$ with the model structure in which fibrations are surjections in positive degrees and weak equivalences quasi-isomorphisms, then $(\cat{grCh}_R^{\geq0})^{\F (\op O)_R^{op}}$ admits the projective model structure, which transfers via the equivalence $N$ to $(\grMod{R}^{\cat\Delta^{op}})^{\F (\op O)_R^{op}}\cong \Mod{\op O_R}^{\cat\Delta^{op}}$.

A bar construction serves as a cofibrant replacement in the $R$-linear context as well.  
 
\begin{definition}
Let $\op O$ be a simplicial operad. The \emph{$R$-linear sequential $\op O$-bar construction} is the composite \[B_{\cat N}^{\op O_R}:\Mod{\op O_R}^{\cat\Delta^{op}}\xrightarrow{B_\bullet^{\mathbb K_{\op O_R}}} \Mod{\op O_R}^{\cat\Delta^{op}\times\cat \Delta ^{\mathrm{op}}}\xrightarrow{|-|}\Mod{\op O_R}^{\cat\Delta^{op}}\] of the simplicial comonadic bar construction with geometric realization.
\end{definition}
 
\begin{lemma}\label{lem:bar cofibrant rlin} Let $\op O$ be an $R$-projective simplicial operad and $\op M_\bullet$ a simplicial $R$-linear $\op O$-module. If $\op M_\bullet$ is $R$-projective in each simplicial degree, then the natural augmentation $B_{\cat N}^{{\op O_R}}(\op M_\bullet)\to \op M_\bullet$ is a cofibrant replacement in $\Mod{\op O_R}^{\cat\Delta^{op}}$.
\end{lemma}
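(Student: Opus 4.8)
The plan is to follow the proof of Lemma~\ref{lem:bar cofibrant} almost line for line, with the $R$-projectivity hypotheses taking over the role played for free by the Cartesian structure of $\cat{sSet}$ in that argument. There are, as there, two things to prove: that the augmentation is a weak equivalence, and that the source is cofibrant.

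I would first check that the augmentation $B_\bullet^{\mathbb K_{\op O_R}}(\op M_\bullet)\to\op M_\bullet$, and hence its geometric realization, is a weak equivalence. Extending the $\grMod R$-enriched adjunction $(\mathbb F_{\op O_R},\mathbb U_{\op O_R})$ levelwise along $\cat\Delta^{\mathrm{op}}$ and transporting through the normalized chains equivalence $N$, one obtains a simplicial Quillen adjunction between $(\grMod R^{\cat\Delta^{\mathrm{op}}})^{\cat N(\op J)^{\mathrm{op}}}$ and $\Mod{\op O_R}^{\cat\Delta^{\mathrm{op}}}$ for the projective model structures recalled above; the levelwise extensions are simplicial functors because $\mathbb F_{\op O_R}$ (a left adjoint) and the restriction functor $\mathbb U_{\op O_R}$ both preserve colimits. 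In particular $\mathbb U_{\op O_R}$ preserves colimits and the monad $\mathbb U_{\op O_R}\mathbb F_{\op O_R}$ is simplicial Quillen in the sense of \cite[Def.~3.1]{JohnsonNoel:LHTAMSM}, so, exactly as in Lemma~\ref{lem:bar cofibrant}, \cite[Prop.~3.13]{JohnsonNoel:LHTAMSM} applies. The projectivity hypotheses are not needed for this half, since $\mathbb U_{\op O_R}$ reflects weak equivalences and the augmentation acquires extra degeneracies after applying it.

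For cofibrancy it suffices to show that $B_\bullet^{\mathbb K_{\op O_R}}(\op M_\bullet)$ is Reedy cofibrant as a simplicial object in the simplicial model category $\Mod{\op O_R}^{\cat\Delta^{\mathrm{op}}}$ in the bar direction, since geometric realization, as a left Quillen functor out of the Reedy model structure, then sends it to a cofibrant object. Following \cite[Sec.~3.3]{JohnsonNoel:LHTAMSM}, the restriction of $B_\bullet^{\mathbb K_{\op O_R}}(\op M_\bullet)$ to the wide subcategory $\cat\Delta_0^{\mathrm{op}}\subseteq\cat\Delta^{\mathrm{op}}$ of order-preserving maps fixing the minimal element is isomorphic to $\mathbb F_{\op O_R}(X_\bullet)$, where $X_\bullet\colon\cat\Delta_0^{\mathrm{op}}\to(\grMod R^{\cat\Delta^{\mathrm{op}}})^{\cat N(\op J)^{\mathrm{op}}}$ has $X_n=\mathbb U_{\op O_R}\mathbb K_{\op O_R}^{\circ n}(\op M_\bullet)$. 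Because $\cat N$ is discrete, $(\grMod R^{\cat\Delta^{\mathrm{op}}})^{\cat N(\op J)^{\mathrm{op}}}\cong\prod_{k\geq0}\grMod R^{\cat\Delta^{\mathrm{op}}}$, whose cofibrant objects are exactly the sequences that are $R$-projective in each simplicial degree (via Dold--Kan, a summand of a projective is projective). By hypothesis $X_0=\mathbb U_{\op O_R}\op M_\bullet$ is such a sequence; and since $\op O$ is $R$-projective, hence $R$-flat, Lemma~\ref{lem:flat F} identifies $\Mod{\op O_R}$ with the category of modules over the operad $H_*(\op O)$ and $\mathbb F_{\op O_R}$ with the corresponding totally free functor, which, being assembled from coproducts of iterated tensor products over $R$ with the projective $R$-modules $H_*(\op O(k))$, preserves degreewise $R$-projectivity. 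Thus every $X_n$ is cofibrant, $X_\bullet$ satisfies the hypotheses of \cite[Prop.~3.22]{JohnsonNoel:LHTAMSM}, and \cite[Prop.~3.17]{JohnsonNoel:LHTAMSM} applied to $B_\bullet^{\mathbb K_{\op O_R}}(\op M_\bullet)$ yields the required Reedy cofibrancy.

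I expect the only real work to lie in the bookkeeping just described: confirming that ``$R$-projective in each simplicial degree'' is precisely the cofibrancy condition transported through the Dold--Thom correspondence, that it is stable under $\mathbb U_{\op O_R}\mathbb F_{\op O_R}$ (this is where $R$-projectivity of $\op O$ and the K\"{u}nneth isomorphism, via Lemma~\ref{lem:flat F}, are indispensable), and that the hypotheses of the cited Johnson--Noel results — stated for simplicial monads on simplicial model categories — are genuinely met once the internal $\mathbb{Z}$-grading and the simplicial direction are kept apart. The weak-equivalence half and the descent from Reedy cofibrancy to cofibrancy of the realization should both be formal.
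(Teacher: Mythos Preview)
Your proposal is correct and follows exactly the approach of the paper, which simply says the proof is identical to that of Lemma~\ref{lem:bar cofibrant} with the projectivity assumptions on $\op O$ and $\mathbb U_{\op O_R}(\op M_\bullet)$ needed to verify the hypotheses of \cite[Prop.~3.22]{JohnsonNoel:LHTAMSM}. Your write-up is in fact more detailed than the paper's, spelling out precisely why those hypotheses guarantee cofibrancy of each $X_n$.
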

 \begin{proof}
The proof is identical to that of Lemma \ref{lem:bar cofibrant}. The assumptions on $\op O$ and $\mathbb{U}_{\op O_R}(\op M_\bullet)$ are necessary to verify the hypotheses of \cite[Prop. 3.22]{JohnsonNoel:LHTAMSM} (the corresponding assumption in the non-linear case is always satisfied).
\end{proof}
 
The two bar constructions admit the following comparison, which is easy to verify (see Lemma \ref{lem:free module homology}).
 
\begin{lemma}\label{lem:bar vs linear bar}
There is a natural transformation $B_{\cat N}^{{\op O_R}}\circ H_*\to H_*\circ B_{\cat N}^{{\op O}}$ of functors from $\Mod{\op O}$ to $\Mod{\op O_R}^{\cat\Delta^{op}}$ that is an isomorphism provided $\op O$ is $R$-flat.
\end{lemma}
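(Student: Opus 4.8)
The plan is to build the natural transformation one bar level at a time out of Lemma~\ref{lem:free module homology}, then check that the levelwise maps are compatible with the simplicial structure of the bar construction and descend through geometric realization; the isomorphism claim will then follow directly from the ``$\op O$ is $R$-flat'' clause of Lemma~\ref{lem:free module homology}.

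First I would record two elementary compatibilities of the linearization functor $H_*$. Since $\mathbb{U}_{\op O}$ and $\mathbb{U}_{\op O_R}$ are restriction functors along a map of enriched categories and its $R$-linearization, the base-change naturality square for $H_*$ displayed just before Lemma~\ref{lem:free module homology} yields a natural isomorphism $\mathbb{U}_{\op O_R}\circ H_*\cong H_*\circ\mathbb{U}_{\op O}$. And Lemma~\ref{lem:free module homology} itself gives a natural transformation $\mathbb{F}_{\op O_R}\circ H_*\to H_*\circ\mathbb{F}_{\op O}$ of functors from simplicial sequences. Splicing these produces a natural transformation $\gamma\colon\mathbb{K}_{\op O_R}\circ H_*\to H_*\circ\mathbb{K}_{\op O}$, where $\mathbb{K}_{\op O}=\mathbb{F}_{\op O}\mathbb{U}_{\op O}$ and $\mathbb{K}_{\op O_R}=\mathbb{F}_{\op O_R}\mathbb{U}_{\op O_R}$ are the comonads underlying the two bar constructions.

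Next I would verify that $\gamma$ respects the counit and comultiplication of the two comonads. This is where the bookkeeping lives, but it is formal: the counit of $\mathbb{K}_{\op O}$ (resp.\ of $\mathbb{K}_{\op O_R}$) is induced from the counit of the free--forgetful adjunction, and these are intertwined by the isomorphism $\mathbb{U}_{\op O_R}H_*\cong H_*\mathbb{U}_{\op O}$; the comultiplication is built from the unit $\op X\to\mathbb{U}_{\op O}\mathbb{F}_{\op O}(\op X)$, which is respected by the canonical comparison map of Lemma~\ref{lem:free module homology} by construction. Granting these two commuting squares, iterating $\gamma$ produces natural maps $\mathbb{K}_{\op O_R}^{n+1}(H_*\op M)\to H_*(\mathbb{K}_{\op O}^{n+1}\op M)$ for each $n$, which assemble over the simplicial degree into a map of simplicial $R$-linear $\op O$-modules $B_\bullet^{\mathbb{K}_{\op O_R}}(H_*\op M)\to H_*\big(B_\bullet^{\mathbb{K}_{\op O}}(\op M)\big)$; applying the (linear) geometric realization, which is a colimit-type construction and therefore preserves natural transformations and levelwise isomorphisms, gives the desired natural transformation $B_{\cat N}^{\op O_R}\circ H_*\to H_*\circ B_{\cat N}^{\op O}$.

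Finally, for the isomorphism statement, assume $\op O$ is $R$-flat. The one point requiring care is that the argument of each occurrence of $\mathbb{F}_{\op O}$ inside $\mathbb{K}_{\op O}^{n+1}(\op M)$ is of the form $\mathbb{U}_{\op O}(-)$ and is in general \emph{not} $R$-flat, so one cannot invoke the ``$\op X$ is $R$-flat'' clause of Lemma~\ref{lem:free module homology}; one invokes instead the ``$\op O$ is $R$-flat'' clause, which makes every instance of $\mathbb{F}_{\op O_R}H_*\to H_*\mathbb{F}_{\op O}$ an isomorphism regardless of its input. Together with $\mathbb{U}_{\op O_R}H_*\cong H_*\mathbb{U}_{\op O}$, an induction on $n$ then shows each $\mathbb{K}_{\op O_R}^{n+1}(H_*\op M)\to H_*(\mathbb{K}_{\op O}^{n+1}\op M)$ is an isomorphism, so the comparison is a levelwise isomorphism of simplicial objects and hence so is its realization. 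I expect the only genuine obstacle to be the second step---checking that $\gamma$ is a morphism of comonads in the lax sense---which is conceptually routine but tedious to write out in full; the flatness bookkeeping in the last step, while easy, is the one place where the choice of which clause of Lemma~\ref{lem:free module homology} to use actually matters.
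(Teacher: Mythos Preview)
Your proposal is correct and follows exactly the route the paper intends: the paper gives no argument beyond the parenthetical ``(see Lemma~\ref{lem:free module homology})'', and your proof is simply a careful unpacking of that hint---building the comparison levelwise from the natural transformation $\mathbb{F}_{\op O_R}\circ H_*\to H_*\circ\mathbb{F}_{\op O}$, checking comonad compatibility, and invoking the $R$-flat clause for the isomorphism. The only minor wrinkle is that, given the stated target $\Mod{\op O_R}^{\cat\Delta^{op}}$ and the way the lemma is later applied (to $B_\bullet^{\mathbb K_{\op O}}$ rather than its realization), the realization step you include at the end is not really needed; the levelwise simplicial comparison already is the desired natural transformation.
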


\subsection{Simplicial linear vs. differential graded modules}\label{section:dgification} Throughout this section, we fix an $R$-flat simplicial operad $\op O$. By Lemma \ref{lem:flat F}, the category $\Mod{\op O_R}$ coincides with the category of modules obtained by viewing $H_*(\op O)$ as an operad in $\grMod{R}$. In this situation, we may compare the homotopy theory of $\Mod{\op O_R}^{\cat\Delta^{op}}$ with another natural homotopy theory associated to $H_*(\op O)$, namely that of the category of modules in $\Ch{R}$ obtained by viewing $H_*(\op O)$ instead as an operad in  $\Ch{R}$.  We denote this category  $\dgMod{H_*(\op O)}$ to avoid ambiguity.

To compare the two, we use the total complex functor $T:\cat{grCh}_R\to \Ch{R}$. There is a natural isomorphism
\[{T}\circ N\circ \mathbb{U}_{\op O_R}\circ \mathbb{F}_{\op O_R} \cong \mathbb{U}_{H_*(\op O)}\circ\mathbb{F}_{H_*(\op O)}\circ T\circ N,\] where $N$ denotes the functor of normalized chains, as above, enabling us to formulate the following definition.

\begin{definition}
The \emph{dg-ification functor} is the unique dashed filler making both of the following square diagrams commute.
\[\xymatrix{
\Mod{\op O_R}^{\cat\Delta^{op}}\ar@{-->}[rrr]\ar[d]^-{\mathbb{U}_{\op O_R}}&&&\dgMod{H_*(\op O)}\ar[d]_-{\mathbb{U}_{H_*(\op O)}}\\
(\grMod{R}^{\cat\Delta^{op}})^{\cat N(\op J)^{\mathrm{op}}}\ar@/^1.2pc/[u]^-{\mathbb{F}_{\op O_R}}\ar[r]^-N&(\cat{grCh}_R^{\geq0})^{\cat N(\op J)^{\mathrm{op}}}\,\ar@{^{(}->}[r]^-i&\cat{grCh}_R^{\cat N(\op J)^{\mathrm{op}}}\ar[r]^-{T}&\Ch{R}^{\cat N(\op J)^{\mathrm{op}}}\ar@/_1.2pc/[u]_-{\mathbb{F}_{H_*(\op O)}}
}\]
\end{definition}

It is not hard to construct the dg-ification functor, since we know how it should be defined on free modules, and every module is a coequalizer of free modules  of a type preserved by the forgetful functors \cite[Prop.~3.7]{Barr-Wells:TTT}. 

The only fact about dg-ification relevant for our purpose is the following.

\begin{proposition}\label{prop:dg left quillen}
For any $R$-flat simplicial operad $\op O$, dg-ification preserves and reflects homotopy colimits.
\end{proposition}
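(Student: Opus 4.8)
The goal is to show that dg-ification $D\colon \Mod{\op O_R}^{\cat\Delta^{op}}\to\dgMod{H_*(\op O)}$ preserves and reflects homotopy colimits. The strategy is to exhibit $D$ as (part of) a Quillen equivalence between the two model structures, after which preservation of homotopy colimits is automatic (a left Quillen functor preserves homotopy colimits, and a Quillen \emph{equivalence} also reflects weak equivalences between cofibrant objects, hence reflects homotopy colimits). Concretely, I would first check that $D$ is a left adjoint: it commutes with the monadic forgetful/free adjunctions over sequences by construction, so one can build its right adjoint $D^{\mathrm{r}}$ by transferring the right adjoint of $T\circ i\circ N\colon (\grMod{R}^{\cat\Delta^{op}})^{\cat N(\op J)^{\mathrm{op}}}\to \Ch{R}^{\cat N(\op J)^{\mathrm{op}}}$ along the two monadic adjunctions (this is the standard ``adjoint lifting'' argument, using that every module is a reflexive coequalizer of free ones as already invoked in the text, cf.\ \cite[Prop.~3.7]{Barr-Wells:TTT}). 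Note $T\circ i\circ N$ is itself a left adjoint: $N$ is an equivalence, $i$ is the fully faithful inclusion of non-negatively-graded complexes which has a right adjoint (good truncation $\tau_{\geq0}$ in the chain direction), and $T$ has a right adjoint given by the obvious ``un-totalization''.

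**Quillen adjunction.** Next I would verify that $(D, D^{\mathrm{r}})$ is a Quillen adjunction for the transferred projective model structures. By Proposition \ref{prop:module model cat}-style transfer, both model structures are created by the forgetful functors $\mathbb{U}_{\op O_R}$ and $\mathbb{U}_{H_*(\op O)}$ down to sequences of simplicial $R$-modules, resp.\ sequences of chain complexes. Since $\mathbb{U}_{H_*(\op O)}\circ D \cong T\circ i\circ N\circ \mathbb{U}_{\op O_R}$ and fibrations/trivial fibrations are detected objectwise after forgetting, it suffices to know that $T\circ i\circ N$ is left Quillen on the underlying categories. That is classical: $N$ is a Quillen equivalence (Dold--Kan), $i$ preserves cofibrations and trivial cofibrations (it is the inclusion of a full subcategory closed under the relevant constructions, with right adjoint $\tau_{\geq0}$ preserving fibrations and trivial fibrations), and $T\colon \cat{grCh}_R\to\Ch{R}$ preserves cofibrations (degreewise split monos with projective cokernel go to the same) and quasi-isomorphisms (a standard spectral-sequence or filtration argument, the total complex of a levelwise quasi-iso is a quasi-iso when things are appropriately bounded — here the non-negativity in the chain direction makes the filtration exhaustive and bounded below). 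So $D$ is left Quillen, hence preserves homotopy colimits.

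**Reflection / Quillen equivalence.** The remaining, and I expect the most delicate, point is \emph{reflection}: I want $D$ to reflect weak equivalences between cofibrant objects, equivalently that the derived unit and counit of $(D,D^{\mathrm{r}})$ are weak equivalences, i.e.\ $(D,D^{\mathrm{r}})$ is a Quillen equivalence. The key input is that $T\circ i\circ N$ induces an equivalence on homotopy categories downstairs: $N$ is a Dold--Kan equivalence, and $T\circ i\colon \cat{grCh}_R^{\geq0}\to \Ch{R}$ is a Quillen equivalence because a bigraded complex, non-negatively graded in the chain direction, has the same derived category information as its totalization — one checks the derived unit/counit are quasi-isomorphisms using the convergent (first-quadrant-type, in the chain direction) spectral sequence of the total complex. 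Granting this, the Quillen equivalence lifts along the monadic adjunctions: a left Quillen functor between categories of algebras over a monad is a Quillen equivalence provided it is so on underlying categories and the monads are suitably compatible (preservation of the relevant cofibrant objects and sifted colimits), which holds here since $B_{\cat N}^{\op O_R}$ and $B_{\cat N}^{H_*(\op O)}$ give functorial cofibrant replacements built from free modules (Lemmas \ref{lem:bar cofibrant rlin} and \ref{lem:bar cofibrant}), and $D$ takes the former to the latter by naturality — this is exactly where the $R$-flatness hypothesis on $\op O$ and the identification of Lemma \ref{lem:flat F} are used, so that free modules match up under $D$. Once $D$ is a Quillen equivalence it preserves and reflects homotopy colimits, completing the proof.

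**Main obstacle.** The hard part is the last step: establishing that $T\circ i\circ N$ is a Quillen equivalence on the underlying presheaf categories and then propagating this through the monadic lift. The subtlety is purely homological — controlling the total-complex spectral sequence so that the derived counit $T\,i\,\tau_{\geq0}(C)\to C$ and unit are quasi-isomorphisms in the unbounded setting — together with the standard but fiddly verification that the monadic lift of a Quillen equivalence remains one (which needs that cofibrant objects, or at least the bar resolutions, are sent to cofibrant objects and that enough colimits are preserved). Everything else is formal adjoint-functor and transfer bookkeeping.
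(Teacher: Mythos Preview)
Your approach contains a genuine gap: the functor $T\circ i\colon \cat{grCh}_R^{\geq0}\to \Ch{R}$ is \emph{not} a Quillen equivalence, so the strategy of lifting a Quillen equivalence through the monadic adjunctions cannot succeed. Consider the objects $A$ and $B$ of $\cat{grCh}_R^{\geq0}$ given by $R$ concentrated in bidegree $(1,0)$ and $(0,1)$, respectively. Both totalize to $R[1]$ in $\Ch{R}$, yet they are not isomorphic in $\Ho(\cat{grCh}_R^{\geq0})$: weak equivalences there are quasi-isomorphisms in each auxiliary degree $q$, so anything weakly equivalent to $A$ has homology concentrated in auxiliary degree $0$, while for $B$ it lies in auxiliary degree $1$. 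Hence $T\circ i$ is not essentially surjective at the level of homotopy categories in any useful sense, and your proposed derived counit also fails concretely: the right adjoint of $T$ sends $V$ to the family $(V[-q])_{q\in\mathbb{Z}}$, and totalizing back yields an infinite direct sum of shifted copies of $V$, not $V$ itself. The auxiliary grading is genuine extra data that totalization forgets; dg-ification is conservative but far from an equivalence.

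The paper's argument is both simpler and avoids this trap. One first observes that the forgetful functors $\mathbb{U}_{\op O_R}$ and $\mathbb{U}_{H_*(\op O)}$ create weak equivalences and colimits, hence preserve and reflect homotopy colimits; this reduces everything to the bottom row $N$, $i$, $T$. Each of these three functors preserves and reflects weak equivalences --- for $T$ this is because $T(V)\cong \bigoplus_q V_{*,q}[q]$ and a direct sum of chain maps is a quasi-isomorphism if and only if each summand is. Now the key formal point: a functor that \emph{preserves} homotopy colimits and \emph{reflects weak equivalences} automatically reflects homotopy colimits (apply it to the comparison map $\hocolim X_i\to X$). So one only needs preservation, which is checked individually: $N$ classically preserves homotopy colimits, $T$ is left Quillen (its right adjoint visibly preserves fibrations and trivial fibrations), and $i$ preserves both colimits and weak equivalences, so its own left derived functor preserves homotopy colimits. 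No Quillen equivalence is required, and indeed none exists.
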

\begin{proof}
Homotopy colimits of operadic modules are preserved and reflected by the relevant forgetful functor, as the forgetful functor preserves and reflects both weak equivalences, and colimits of modules are created in the underlying category of sequences.  it suffices to show that each of the bottom functors preserves and reflects homotopy colimits. Each of these functors preserves and reflects weak equivalences, so it suffices to demonstrate mere preservation of homotopy colimits. 

It is well known that the normalized chains functor preserves homotopy colimits. The functor $T$ also preserves homotopy colimits because it is a left Quillen functor, since the right adjoint sends a chain complex $V$ to the graded chain complex given in auxiliary degree $q$ by $V[-q]$, a construction that preserves surjections and quasi-isomorphisms. Finally, the inclusion $i$ of non-negatively graded complexes preserves weak equivalences, hence is its own total left derived functor, and, since $i$ also preserves colimits, this left derived functor preserves homotopy colimits.
\end{proof}

\section{The operadic K\"{u}nneth spectral sequence}\label{sec:KSS}

\subsection{Linearized Boardman--Vogt tensor products}\label{section:linearized tensor product}

The constructions and results of Section \ref{sec:bvt} generalize in a straightforward way to the $R$-linear context. To begin, for any subcategory $\F'$ of $\F$ that is closed under products, let $\mu_R$ denote the $R$-linearization of $\mu: \F' (\op O) \times \F' (\op P) \to \F' (\op O\bvt \op P)$.

\begin{definition}
Let $\op O$ and $\op P$ be simplicial operads and $\F'\subseteq \F$ a subcategory closed under finite products. The \emph{linearized Boardman--Vogt tensor product} of $\op M\in \grMod{R}^{\F'(\op O)_R^{\mathrm{op}}}$ and $\op N\in \grMod{R}^{\F'(\op P)_R^{\mathrm{op}}}$ is the enriched left Kan extension in the following diagram of $\grMod{R}$-categories.
\[\xymatrix{
\F'(\op O)_R^{op}\otimes\F'(\op P)_R^{op}\ar[d]_\nabla\ar[r]^-{\op M\otimes \op N}&\grMod{R}\otimes\grMod{R}\ar[r]^-{\otimes}&\grMod{R}\\
\big(\F'(\op O)\times\F'(\op P)\big)_R^{op}\ar[d]_{\mu_R}\\
\F'(\op O\bvt\op P)_R^{op}\ar@{-->}[uurr]_-{\op M\bvt\op N}.
}\]
\end{definition}

\begin{remark}
The definition of the linearized Boardman--Vogt tensor product extends in an obvious way to $R$-linear modules valued in $R$-linear categories with a compatible symmetric monoidal structure. We will have no use for such generality.
\end{remark}

Naturality of the linearized Boardman--Vogt tensor product of presheaves in the operad and $\F'$ coordinates can be established by a proof essentially identical to that of Lemma \ref{lem:nat-bv}.

\begin{lemma}\label{lem:nat-bv-lin} Let $\F'$ be a subcategory of $\F$ that is closed under finite products. Let $\vp: \op O \to \op O'$ and $\psi:\op P \to \op P'$ be morphisms of simplicial operads. For all $\op M\in \grMod{R}^{\F'(\op O)_R^{\mathrm{op}}}$  and $\op N\in \grMod{R}^{\F'(\op P)_R^{\mathrm{op}}}$, there is a natural isomorphism
$$(\vp_R\bvt \psi_R)_!(\op M\bvt \op N) \cong (\vp_R)_!(\op M) \bvt (\psi_R)_!(\op N)$$
in $\grMod{R}^{\F' (\op O' \bvt \op P')_R^{\mathrm{op}}}$.  Moreover, if $\iota: \F' \hookrightarrow \F$ denotes the inclusion functor, then there is a natural isomorphism
$$\iota_!(\op M\bvt \op N) \cong \iota_!(\op M) \bvt \iota_!(\op N)$$
in $\Mod {(\op O\bvt \op P)_R}$.
\end{lemma}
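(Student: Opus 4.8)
The statement to prove is Lemma \ref{lem:nat-bv-lin}, the $R$-linear analogue of Lemma \ref{lem:nat-bv}. As the statement itself advertises, the proof should be a near-verbatim transcription of the proof of Lemma \ref{lem:nat-bv}, with Cartesian products of simplicial sets replaced by tensor products of graded $R$-modules and ordinary left Kan extensions replaced by $\grMod{R}$-enriched left Kan extensions. So the plan is: first recall that $\op M \bvt \op N = (\mu_R)_! \circ \nabla_! (\op M \otimes \op N)$, where $\op M \otimes \op N$ denotes the external tensor product in $\grMod{R}^{\F'(\op O)_R^{\mathrm{op}} \otimes \F'(\op P)_R^{\mathrm{op}}}$, defined by $(\op M \otimes \op N)(I,J) = \op M(I) \otimes \op N(J)$. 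Then the isomorphism will be assembled by stacking two commuting squares of enriched left Kan extension functors, exactly as in Lemma \ref{lem:nat-bv}.

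**Key steps.** First I would verify that the square
\[\xymatrix{
\grMod{R}^{\big(\F'(\op O)\times\F'(\op P)\big)_R^{op}} \ar[r]^-{(\mu_R)_!} \ar[d]_{(\vp_R\times\psi_R)_!} & \grMod{R}^{\F'(\op O\bvt\op P)_R^{op}} \ar[d]^{(\vp_R\bvt\psi_R)_!}\\
\grMod{R}^{\big(\F'(\op O')\times\F'(\op P')\big)_R^{op}} \ar[r]^-{(\mu_R)_!} & \grMod{R}^{\F'(\op O'\bvt\op P')_R^{op}}
}\]
commutes up to natural isomorphism. This follows from naturality of $\mu$ in the operad variables, which passes through $R$-linearization since $(-)_R$ is functorial on simplicial functors, together with the fact that left Kan extensions compose: both composites are the left Kan extension along the common composite functor $\big(\F'(\op O)\times\F'(\op P)\big)_R^{op} \to \F'(\op O'\bvt\op P')_R^{op}$. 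Second, I would verify that the external-product square
\[\xymatrix{
\grMod{R}^{\F'(\op O)_R^{op}}\times\grMod{R}^{\F'(\op P)_R^{op}} \ar[r]^-{-\otimes-} \ar[d]_{(\vp_R)_!\times(\psi_R)_!} & \grMod{R}^{\F'(\op O)_R^{op}\otimes\F'(\op P)_R^{op}} \ar[d]^{(\vp_R\otimes\psi_R)_!}\\
\grMod{R}^{\F'(\op O')_R^{op}}\times\grMod{R}^{\F'(\op P')_R^{op}} \ar[r]^-{-\otimes-} & \grMod{R}^{\F'(\op O')_R^{op}\otimes\F'(\op P')_R^{op}}
}\]
commutes; this uses the pointwise (coend) formula for enriched left Kan extensions together with the fact that, in $\grMod{R}$, tensoring with a fixed graded module is cocontinuous in each variable, so it commutes with the relevant colimits/coends. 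Composing with the $\nabla_!$ square (which commutes trivially, $\nabla$ being natural in both operad variables) and pasting yields the first isomorphism. Applying the first isomorphism to the unit maps $\eta_{\op O}\colon \op J \to \op O$, $\eta_{\op P}\colon \op J \to \op P$, together with $\op J \bvt \op J \cong \op J$ and the definition $\mathbb F_{\op O_R} = (\eta_R)_! \circ (\iota_R)_!$, combined with the $\F'=\cat N$ case to handle the $\iota_!$ assertion exactly as in Example \ref{ex:bvt}, gives the second isomorphism in $\Mod{(\op O\bvt\op P)_R}$.

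**Main obstacle.** There is no serious obstacle; the content is entirely formal. The one point requiring a modicum of care is the interchange of the external product $-\otimes-$ with enriched left Kan extension, i.e. the second square above: in the simplicial case (Lemma \ref{lem:nat-bv}) the authors invoke that "formation of the Cartesian product with a fixed simplicial set preserves colimits," and the honest analogue here is that $\grMod{R}$ is closed symmetric monoidal, so $V \otimes -$ is a left adjoint for every graded module $V$ and hence preserves the colimits (and, being $\grMod{R}$-enriched, the enriched coends) computing the Kan extension. Since $\grMod{R}$ is genuinely closed symmetric monoidal — unlike $\cat{sSet}$ with its Cartesian structure, which is also closed but where the argument was phrased additively — this step is if anything cleaner in the linear setting. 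I would therefore state the two squares, note that each commutes for the reason just given, paste them, and conclude, omitting the routine verification of the second isomorphism as the authors do for Lemma \ref{lem:nat-bv}.
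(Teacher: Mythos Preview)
Your approach is correct and essentially identical to the paper's own, which simply states that the proof is ``essentially identical to that of Lemma~\ref{lem:nat-bv}''; you have faithfully transcribed that argument into the $\grMod{R}$-enriched setting, correctly noting the extra (trivially commuting) $\nabla_!$ square and replacing the appeal to cocontinuity of $-\times K$ with the closedness of $\grMod{R}$.

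One small confusion in your final paragraph: the second isomorphism concerns $\iota_!$ for the inclusion $\iota\colon\F'\hookrightarrow\F$ and is \emph{not} a consequence of the first isomorphism applied to unit maps---rather, it requires its own (entirely analogous) pair of commuting squares, this time with $\iota$ playing the role of $\vp$ and $\psi$. What you describe is instead how the two isomorphisms are \emph{applied} in Example~\ref{ex:bvt-lin} to deduce compatibility with the totally free functors $\mathbb{F}_{\op O_R}$; that example is a corollary of the lemma, not part of its proof.
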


\begin{example}\label{ex:bvt-lin} As in the non-linear case, Lemma \ref{lem:nat-bv-lin} implies that for all sequences $\op X$ and $\op Y$ in $\grMod{R}$
$$\mathbb F_{\op O_R}(\op X) \bvt \mathbb F_{\op P_R}(\op Y) \cong \mathbb F_{(\op O\bvt \op P)_R}(\op X\bvt \op Y)$$
in $\Mod {(\op O\bvt \op P)_R}$.
\end{example}

There is a comparison map between the homology of Boardman--Vogt tensor products of sequences and the Boardman--Vogt tensor product of their homologies.

\begin{lemma}\label{lem:box tensor flat} Let $\op X$ and $\op Y$ be sequences of simplicial sets.
There is a natural map $H_*(\op X)\bvt H_*(\op Y)\to H_*(\op X\bvt\op Y)$ of sequences of graded $R$-modules, which is an isomorphism if either $\op X$ or $\op Y$ is $R$-flat.
\end{lemma}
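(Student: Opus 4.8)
The plan is to reduce the assertion to the classical Eilenberg--Zilber/Künneth comparison, applied one level at a time. First I would record the explicit shape of both tensor products on sequences. By Example~\ref{ex:bvt-seq}, $(\op X\bvt\op Y)(n)\cong\coprod_{lm=n}\op X(l)\times\op Y(m)$, and the same computation in the $R$-linear setting (with $\op O=\op P=\op J$ and $\F'=\F_{\text{disc}}$, using Lemma~\ref{lem:nat-bv-lin} in place of its non-linear counterpart) shows that, for sequences $\op A$ and $\op B$ of graded $R$-modules, $(\op A\bvt\op B)(n)\cong\bigoplus_{lm=n}\op A(l)\otimes\op B(m)$. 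Taking $\op A=H_*(\op X)$ and $\op B=H_*(\op Y)$ and using that $H_*$ carries coproducts of simplicial sets to direct sums, both the source and the target of the claimed map acquire, in level $n$, a description as a direct sum over the factorizations $lm=n$: the source is $\bigoplus_{lm=n}H_*(\op X(l))\otimes H_*(\op Y(m))$ and the target is $\bigoplus_{lm=n}H_*\big(\op X(l)\times\op Y(m)\big)$.

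The comparison map I would then construct via the universal property of the enriched left Kan extension defining the linearized Boardman--Vogt tensor product: a map $H_*(\op X)\bvt H_*(\op Y)\to H_*(\op X\bvt\op Y)$ is the same datum as an enriched natural transformation from $\otimes\circ(H_*(\op X)\otimes H_*(\op Y))$ to $H_*(\op X\bvt\op Y)\circ\mu_R\circ\nabla$, and at an object $(I,I')$ I would take the composite of the Eilenberg--Zilber map $H_*(\op X(I))\otimes H_*(\op Y(I'))\to H_*(\op X(I)\times\op Y(I'))$ with the map induced on homology by the structural comparison $\op X(I)\times\op Y(I')\to(\op X\bvt\op Y)(I\times I')$ coming from $\mu$ and the unit of $\mu_!$. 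Enriched naturality of this family follows from naturality of the Eilenberg--Zilber transformation together with the functoriality built into $\bvt$, and naturality of the resulting comparison in $\op X$ and $\op Y$ is likewise immediate from naturality of Eilenberg--Zilber. Unwinding the colimit formula for the left Kan extension exactly as in Example~\ref{ex:bvt-seq}, this map is identified, level by level, with the direct sum over $lm=n$ of the Eilenberg--Zilber maps $H_*(\op X(l))\otimes H_*(\op Y(m))\to H_*(\op X(l)\times\op Y(m))$.

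For the isomorphism claim I would argue as follows: if $\op X$ is $R$-flat then $H_*(\op X(l))$ is $R$-flat for every $l$ (the case of $\op Y$ being symmetric), so each summand $H_*(\op X(l))\otimes H_*(\op Y(m))\to H_*(\op X(l)\times\op Y(m))$ is an isomorphism -- this is precisely the statement, recalled at the start of Section~\ref{sec:linear}, that the lax structure map of $H_*$ is an isomorphism as soon as one of its two arguments is $R$-flat -- and a direct sum of isomorphisms is an isomorphism.

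The only non-formal ingredient is the Künneth theorem; the single point requiring genuine care is the bookkeeping in the second paragraph that matches the abstractly defined comparison map with the levelwise Eilenberg--Zilber maps, and I do not expect any real obstacle there.
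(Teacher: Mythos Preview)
Your proposal is correct and follows essentially the same approach as the paper: both arguments identify the source and target in arity $n$ as the direct sums $\bigoplus_{lm=n}H_*(\op X(l))\otimes H_*(\op Y(m))$ and $\bigoplus_{lm=n}H_*(\op X(l)\times\op Y(m))$, take the comparison map to be the sum of the K\"unneth maps (the lax monoidal structure of $H_*$ together with its compatibility with coproducts), and conclude the isomorphism claim from the K\"unneth theorem under the $R$-flatness hypothesis. The paper simply defines the map componentwise without routing through the universal property of the left Kan extension, so your second paragraph is a more elaborate packaging of the same construction rather than a different idea.
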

\begin{proof} The component of the desired map in arity $k$ is the map 
\[\left(H_*(\op X)\mat H_*(\op Y)\right)(k)=\bigoplus_{ij=k}H_*(\op X(i))\otimes H_*(\op Y(j))\to H_*\left(\coprod_{ij=k}\op X(i)\times \op Y(j)\right)=H_*\left((\op X\mat\op Y)(k)\right)\] induced by the lax monoidal structure and compatibility with coproducts of $H_*$. The hypothesis of $R$-flatness ensures that the K\"{u}nneth isomorphism holds.
\end{proof}

\begin{lemma}\label{lem:bv vs linear bv} Let $\op O$ and $\op P$ be simplicial operads.  For all $\op M\in \Mod{\op O}$ and $\op N\in \Mod{\op P}$,
there is a natural map $H_*(\op M)\bvt H_*(\op N)\to H_*(\op M\bvt \op N)$ of $R$-linear $\op O\bvt \op P$-modules, which is an isomorphism if $\op M$ and $\op N$ are totally free on $R$-flat generating sequences.
\end{lemma}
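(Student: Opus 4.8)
The plan is to build the comparison map from the universal property of the linearized Boardman--Vogt tensor product, and then to verify it is an isomorphism in the totally free case by a direct computation reducing everything to lemmas already established.

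To construct the map: since $\op M\bvt\op N=\mu_!(\op M\boxtimes\op N)$, the unit of the $(\mu_!,\mu^*)$-adjunction supplies, naturally in finite sets $I$ and $J$, a map $\op M(I)\times\op N(J)=(\op M\boxtimes\op N)(I,J)\to(\op M\bvt\op N)(I\times J)$. Applying $H_*$ to this and precomposing with the lax monoidal comparison $H_*(\op M(I))\otimes H_*(\op N(J))\to H_*\big(\op M(I)\times\op N(J)\big)$ produces a $\grMod{R}$-natural transformation from the functor $\otimes\circ\big(H_*(\op M)\otimes H_*(\op N)\big)$ on $\F(\op O)_R^{\mathrm{op}}\otimes\F(\op P)_R^{\mathrm{op}}$ to the restriction of $H_*(\op M\bvt\op N)$ along $\mu_R\circ\nabla$. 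The universal property of the enriched left Kan extension defining $H_*(\op M)\bvt H_*(\op N)$ then yields the desired map $H_*(\op M)\bvt H_*(\op N)\to H_*(\op M\bvt\op N)$; that it is a morphism of $R$-linear $\op O\bvt\op P$-modules, natural in $\op M$ and $\op N$, follows from the lax monoidal coherence of $H_*$ together with the naturality of $\mu$, just as in the proof of Lemma \ref{lem:nat-bv-lin}.

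For the isomorphism statement, write $\op M=\mathbb F_{\op O}(\op X)$ and $\op N=\mathbb F_{\op P}(\op Y)$ for simplicial sequences $\op X$ and $\op Y$, with $\op M$ and $\op N$ assumed $R$-flat. The key preliminary point is that $\op X$ and $\op Y$ are then $R$-flat as well. Indeed, inspecting the composition-product formula for $\mathbb F_{\op O}(\op X)$ shows that the arity-$n$ part $\op X(n)$ sits, via the unit $\op X\to\mathbb U_{\op O}\mathbb F_{\op O}(\op X)$, inside a coproduct summand of $\mathbb F_{\op O}(\op X)(n)$ on which the unit vertex of $\op O(1)$ furnishes a retraction onto $\op X(n)$; since $H_*$ sends coproducts to direct sums, $H_*(\op X)$ is thus a retract of the $R$-flat sequence $H_*(\op M)$, hence $R$-flat, and likewise for $\op Y$. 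It follows by the K\"unneth theorem that $\op X\bvt\op Y$, whose arity-$n$ part is $\coprod_{lm=n}\op X(l)\times\op Y(m)$, is also $R$-flat.

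Now Lemma \ref{lem:free module homology} gives $H_*(\op M)\cong\mathbb F_{\op O_R}(H_*(\op X))$ and $H_*(\op N)\cong\mathbb F_{\op P_R}(H_*(\op Y))$, so Example \ref{ex:bvt-lin} yields
\[H_*(\op M)\bvt H_*(\op N)\cong\mathbb F_{(\op O\bvt\op P)_R}\big(H_*(\op X)\bvt H_*(\op Y)\big),\]
while Example \ref{ex:bvt} gives $\op M\bvt\op N\cong\mathbb F_{\op O\bvt\op P}(\op X\bvt\op Y)$, so a second use of Lemma \ref{lem:free module homology} yields
\[H_*(\op M\bvt\op N)\cong\mathbb F_{(\op O\bvt\op P)_R}\big(H_*(\op X\bvt\op Y)\big).\]
Pushing the isomorphism $H_*(\op X)\bvt H_*(\op Y)\xrightarrow{\ \sim\ }H_*(\op X\bvt\op Y)$ of Lemma \ref{lem:box tensor flat} (available since $\op X$ is $R$-flat) through $\mathbb F_{(\op O\bvt\op P)_R}$ identifies the right-hand sides of the two displays. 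The final step---which I expect to be the main obstacle---is to check that the resulting composite isomorphism agrees with the comparison map constructed above. No single step is difficult, but the verification requires carefully unwinding the stack of left Kan extensions, linearizations, and K\"unneth maps at play, and noting that each isomorphism invoked (from Lemmas \ref{lem:free module homology} and \ref{lem:box tensor flat} and Examples \ref{ex:bvt-lin} and \ref{ex:bvt}) is, like the comparison map, assembled solely from the lax monoidal structure of $H_*$ and the universal properties of the constructions involved.
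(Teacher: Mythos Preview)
Your proposal is correct and follows essentially the same approach as the paper: build the comparison map from the lax monoidal structure of $H_*$ and the universal property of the enriched left Kan extension defining the linearized Boardman--Vogt tensor product, then in the totally free case chain together the isomorphisms of Example~\ref{ex:bvt}, Lemma~\ref{lem:free module homology}, Lemma~\ref{lem:box tensor flat}, and Example~\ref{ex:bvt-lin}. You are in fact slightly more careful than the paper in two places: you explicitly argue that the generating sequences $\op X$ and $\op Y$ inherit $R$-flatness from $\op M$ and $\op N$ via the retract coming from the unit of $\op O$, and you flag the compatibility check between the composite isomorphism and the comparison map, which the paper leaves entirely implicit.
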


\begin{proof} For all pairs of modules $\op M$, $\op N$ as in the statement of the lemma, the respective lax monoidal structures of $H_*$ and of the tensor product of $\grMod{R}$-categories combine to produce a $\grMod R$-natural transformation 
\[\alpha: (-\otimes -)\circ (H_*(\op M)\otimes H_*(\op N)) \Rightarrow H_* \circ (-\times -)_R \circ \nabla \circ (\op M_R \otimes \op N_R).\]  
On the other hand, the canonical $\S$-natural transformation
$$(-\times -)\circ (\op M\times \op N) \Rightarrow (\op M \bvt \op N)\circ \mu$$ induces a $\grMod R$-natural transformation
$$\beta: H_* \circ (-\times -)_R \circ (\op M\times \op N)_R \Rightarrow H_* \circ (\op M \bvt \op N)_R \circ \mu_R = H_*(\op M \bvt \op N) \circ \mu _R.$$
By the universal property of enriched left Kan extensions, there is therefore a unique $\grMod R$-natural transformation 
$$H_*(\op M )\bvt H_*(\op N) \Rightarrow H_*(\op M\bvt \op N)$$
factoring the composite $\beta\alpha$.

When $\op M$ and $\op N$ are totally free, we may write $\op M=\mathbb{F}_{\op O}(\op X)$ and $\op N=\mathbb{F}_{\op P}(\op Y)$, where $\op X$ and $\op Y$  are seqences in $\cat {sSet}$. Taking these sequences to be $R$-flat, we have the isomorphism
\begin{align*}
H_*(\op M\bvt \op N)&=H_*(\mathbb{F}_{\op O}(\op X)\bvt \mathbb{F}_{\op P}(\op Y))\\
&\cong H_*(\mathbb{F}_{\op O\bvt \op P}(\op X\bvt \op Y)) &\quad (\ref{ex:bvt})\\
&\cong \mathbb{F}_{(\op O\bvt\op P)_R}(H_*(\op X\bvt\op Y)) &\quad (\ref{lem:free module homology})\\
&\cong \mathbb{F}_{(\op O\bvt\op P)_R}(H_*(\op X)\bvt H_*(\op Y))&\quad (\ref{lem:box tensor flat})\\
&\cong \mathbb{F}_{\op O_R}(H_*(\op X))\bvt \mathbb{F}_{\op P_R}(H_*(\op Y))&\quad (\ref{ex:bvt-lin})\\
&\cong H_*(\mathbb{F}_{\op O}(\op X))\bvt H_*(\mathbb{F}_{\op P}(\op Y))&\quad (\ref{lem:free module homology})\\
&\cong H_*(\op M)\bvt H_*(\op N).
\end{align*}
\end{proof}

As in the non-linear case, the linearized Boardman--Vogt tensor product with a fixed module behaves well.  The proofs of the properties below again follow immediately from results in \cite[Sec. 3]{DwyerHessKnudsen:CSP}.

\begin{lemma}\label{lem:bvt-leftquillen-lin}
Let $\op O$ and $\op P$ be simplicial operads.   
The functors \[\op M\bvt -: \Mod{\op P_R}\to \Mod{(\op O\bvt \op P)_R}\] and \[-\bvt \op N: \Mod{\op O_R}\to \Mod{(\op O\bvt \op P)_R}\]
are left adjoints
for any $R$-linear $\op O$-module $\op M$ and $R$-linear $\op P$-module $\op N$. If $\op M$ (respectively, $\op N$) is cofibrant, then the left adjoint is a left Quillen functor.
\end{lemma}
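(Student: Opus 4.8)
The plan is to deduce the $R$-linear statement from its non-linear counterpart, Lemma~\ref{lem:bvt-leftquillen}, by transporting adjunctions and Quillen functors through the linearization machinery, rather than re-deriving everything from scratch. The reference \cite[Sec.~3]{DwyerHessKnudsen:CSP} establishes the analogous facts in the simplicial world, and the category $\Mod{\op O_R}$ is built from $\F(\op O)_R$, which is itself the linearization of $\F(\op O)$; so one expects a formal translation to work, exactly as the paragraph preceding the statement already advertises.

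First I would treat the adjointness claim. Fix an $R$-linear $\op P$-module $\op M$ (the case of $-\bvt \op N$ being symmetric). By the colimit formula for enriched left Kan extensions, the functor $\op N\mapsto \op M\bvt\op N=(\mu_R)_!\circ\nabla_!(\op M\otimes\op N)$ is a composite of the external-product functor $\op M\otimes-$, the comparison $\nabla_!$, and $(\mu_R)_!$. The functors $\nabla_!$ and $(\mu_R)_!$ are enriched left Kan extensions, hence honest left adjoints (with right adjoints $\nabla^*$ and $\mu_R^*$). For $\op M\otimes -$ one checks directly that $\grMod{R}$ is closed symmetric monoidal, so $\op M(I)\otimes-$ has a right adjoint $\underline{\Hom}_{\grMod R}(\op M(I),-)$ objectwise, and these assemble (using the $\F'(\op O)_R$-action on $\op M$) into a right adjoint for the external product; this is the $R$-linear avatar of the corresponding step in \cite[Lem.~3.2]{DwyerHessKnudsen:CSP}. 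Composing right adjoints gives the desired right adjoint to $\op M\bvt-$, proving the first assertion.

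Next, the Quillen claim. Since $\Mod{\op P_R}$ and $\Mod{(\op O\bvt\op P)_R}$ carry projective model structures in which weak equivalences and fibrations are objectwise (the $R$-linear analogue of Proposition~\ref{prop:module model cat}, via the Dold--Thom equivalence $N$ of Section~\ref{section:linearized tensor product}), it suffices to check that the right adjoint constructed above preserves fibrations and acyclic fibrations; equivalently, that $\op M\bvt-$ preserves cofibrations and acyclic cofibrations. By cofibrant generation it is enough to treat generating (acyclic) cofibrations. These are built, via $(\eta_R)_!(\iota_R)_!=\mathbb{F}_{\op P_R}$, from (acyclic) cofibrations of sequences in $\grMod R$; and by Example~\ref{ex:bvt-lin}, $\op M\bvt\mathbb{F}_{\op P_R}(\op Y)$ can be analyzed—when $\op M$ itself is, up to the relevant cofibrant replacement, totally free—using the identification $\mathbb{F}_{\op O_R}(\op X)\bvt\mathbb{F}_{\op P_R}(\op Y)\cong\mathbb{F}_{(\op O\bvt\op P)_R}(\op X\bvt\op Y)$, reducing the question to the behavior of $-\bvt-$ on sequences, where it is a coproduct of $\otimes$'s of projective modules and hence manifestly left Quillen. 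The cofibrancy hypothesis on $\op M$ is precisely what lets one run this reduction. I would phrase this, as the authors suggest, by citing the structure of the argument in \cite[Prop.~3.12]{DwyerHessKnudsen:CSP} and indicating the (purely formal) modifications.

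The main obstacle is bookkeeping around the comparison functor $\nabla:\cat C_R\otimes\cat D_R\to(\cat C\times\cat D)_R$, which need not be an isomorphism in general. One must check that $\nabla_!$ is well-behaved enough—it is always a left adjoint, but its interaction with the model structures and with $\mu_R^*$ requires the identifications of Section~\ref{section:linearized tensor product} to be used carefully, and it is the one place where the linear argument genuinely diverges from the simplicial one. I expect everything else to be a routine transcription of \cite[Sec.~3]{DwyerHessKnudsen:CSP}, which is why a one-line proof citing that section is appropriate here.
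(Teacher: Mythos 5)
Your overall route---deduce the $R$-linear statement by transcribing \cite[Sec.~3]{DwyerHessKnudsen:CSP}---is exactly what the paper itself does: its ``proof'' is the single remark preceding the lemma that these properties follow immediately from that section. Your adjointness argument is a correct transcription: $\op M\bvt-$ factors as $(\mu_R)_!\circ\nabla_!\circ(\op M\boxtimes-)$, each factor is a left adjoint (enriched left Kan extensions for the last two, and an objectwise internal hom in $\grMod{R}$ assembled into an enriched end for the external product), and the failure of $\nabla$ to be an isomorphism is harmless because only $\nabla_!$ is used.

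The left Quillen half, however, has a genuine gap. Reducing to the generating (acyclic) cofibrations $\mathbb{F}_{\op P_R}(j)$ is fine, but your treatment of the fixed module is not: a cofibrant $\op M$ need not be totally free, and the phrase ``up to the relevant cofibrant replacement'' cannot be used here---being left Quillen is a property of the point-set functor $\op M\bvt-$ attached to the given $\op M$, and replacing $\op M$ by a weakly equivalent free module proves nothing about the original functor. What cofibrancy of $\op M$ actually provides is a presentation as a retract of a cellular module built by attaching free cells $\mathbb{F}_{\op O_R}(i)$; since $-\bvt\mathbb{F}_{\op P_R}(\op Y)$ preserves the relevant colimits, the correct argument is a cellular induction whose inductive step is a pushout--product style verification on free cells, where the identification of Example~\ref{ex:bvt-lin} (the linear analogue of Example~\ref{ex:bvt}) enters. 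That induction is precisely the content of \cite[Prop.~3.12]{DwyerHessKnudsen:CSP} that the paper is importing, so you must either cite it for this step, as the authors do, or reproduce it; the free-module identification applied to a ``replacement'' of $\op M$ does not close the argument. A minor further point: the homotopical content lives in $\Mod{\op P_R}^{\cat\Delta^{op}}$, equivalently in non-negatively graded chain objects, with the transferred projective structure described before Lemma~\ref{lem:bar cofibrant rlin}; your appeal to the Dold--Thom correspondence is right, but the generating cofibrations should be taken there rather than in $\grMod{R}$ itself.
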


By Lemma \ref{lem:bvt-leftquillen-lin}, it makes sense to speak of the \emph{derived Boardman--Vogt tensor product} of an $R$-linear $\op O$-module $\op M$ and a $R$-linear $\op P$-module $\op N$. In good circumstances, Lemma \ref{lem:bar cofibrant} allows us to compute this derived tensor product as a tensor product of bar constructions.

\subsection{The spectral sequence}

The operadic K\"unneth spectral sequence is a special case of the following construction.

\begin{proposition}\label{prop:linearization spectral sequence} Let $\op O$ be a simplicial operad, and let $\op M_\bullet$ be a simplicial $\op O$-module. There is a natural, convergent spectral sequence of $R$-linear $\op O$-modules 
\[E^2_{p,q}\cong H_p\big(H_*(\op M_\bullet)\big)_q\implies H_{p+q}(|\op M_\bullet|).\]
\end{proposition}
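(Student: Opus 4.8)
The plan is to produce the spectral sequence as the homotopy spectral sequence of a simplicial object in a stable (or stably-behaved) setting, after replacing $\op M_\bullet$ by a levelwise cofibrant resolution so that geometric realization computes the homotopy colimit. First I would note that $|\op M_\bullet|$, the geometric realization of the simplicial $\op O$-module $\op M_\bullet$, is a model for the homotopy colimit $\hocolim_{\cat\Delta^{op}}\op M_\bullet$ provided $\op M_\bullet$ is Reedy cofibrant; since $\Mod{\op O}$ carries the projective model structure (Proposition \ref{prop:module model cat}), and since everything is built from simplicial sets which are automatically cofibrant, this Reedy cofibrancy is automatic, so $|\op M_\bullet|\simeq \hocolim\op M_\bullet$ with no further fuss. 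Applying $H_*$ objectwise, the standard spectral sequence of a simplicial object then has the form $E^1_{p,q}=H_q(\op M_p)\implies H_{p+q}(|\op M_\bullet|)$, with $E^2$-page the homology of the normalized chain complex in the $p$-direction. Convergence is immediate because $\op M_p$ is nonnegatively graded in the simplicial direction, so the filtration on $H_*(|\op M_\bullet|)$ is bounded below and exhaustive; this is the classical convergence statement for the homology spectral sequence of a simplicial space.

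The key point that distinguishes this from the bare topological statement is that the whole construction must take place in the category $\Mod{\op O_R}$ of $R$-linear $\op O$-modules, i.e., the differentials and the identification of $E^2$ must be compatible with the $H_*(\op O)$-module (equivalently $\F(\op O)_R$-enriched presheaf) structure. For this I would use the Dold--Thom correspondence recalled in the paper: applying $H_*$ objectwise to $\op M_\bullet$ yields a simplicial object in $\Mod{\op O_R}$, namely $H_*(\op M_\bullet)$, and via $N:\grMod{R}^{\cat\Delta^{op}}\xrightarrow{\simeq}\cat{grCh}_R^{\geq0}$ this corresponds to a nonnegatively-graded chain complex of $R$-linear $\op O$-modules whose homology in chain-degree $p$ is exactly $H_p(H_*(\op M_\bullet))$ — this is the claimed $E^2$-term, and it visibly lives in $\Mod{\op O_R}$ because $N$ is an equivalence of module categories (as stated in Section \ref{section:dgification}'s preliminaries). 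The abutment $H_*(|\op M_\bullet|)$ likewise carries its $R$-linear $\op O$-module structure since $H_*$ of an $\op O$-module is an $R$-linear $\op O$-module by definition. So the task reduces to checking that the spectral sequence arising from the skeletal filtration of $|\op M_\bullet|$ — or, what amounts to the same, the spectral sequence of the bicomplex obtained by applying normalized chains in both the simplicial and internal directions — is a spectral sequence of $\Mod{\op O_R}$-objects and that its $E^2$ and abutment are as asserted. Both filtration and differentials are induced by maps of $\op O$-modules (the face maps of $\op M_\bullet$ are such maps, and the skeletal inclusions are cofibrations of $\op O$-modules), so every page and every differential is automatically $\Mod{\op O_R}$-linear.

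Concretely I would carry out the following steps. Step one: pass to the (double) normalized chain complex; apply $T\circ N$ in the internal grading to get a double complex of $R$-modules with extra $\F(\op O)_R$-enriched presheaf structure, and observe that objectwise everything is a bicomplex of $R$-modules to which the standard spectral sequence of a first-quadrant (in the relevant sense) bicomplex applies. Step two: read off $E^1_{p,q}=H_q(\op M_p)$ and $E^2_{p,q}=H_p(H_*(\op M_\bullet))_q$ from the two filtrations of that bicomplex; naturality of all maps involved in $\op O$ (face/degeneracy maps are $\op O$-module maps, $H_*$ is functorial on $\Mod{\op O}$) upgrades this to a sequence of $R$-linear $\op O$-modules. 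Step three: identify the abutment with $H_*(|\op M_\bullet|)$ using that geometric realization of a simplicial $\op O$-module is computed objectwise in $\cat{sSet}$ and that the chains on a realization are quasi-isomorphic to the total complex of the chains on the levels (the Eilenberg--Zilber / Dold--Thom comparison again), which is what underlies the filtration by skeleta. Step four: convergence, via boundedness-below of the simplicial filtration.

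The main obstacle I expect is not the existence of the spectral sequence — which is classical — but making the $R$-linear $\op O$-module structure on every page genuinely rigorous rather than merely plausible, i.e., being careful that the isomorphism between ``apply $H_*$ to $|\op M_\bullet|$'' and ``take the total complex of the bicomplex of $H_*(\op M_p)$'' is an isomorphism of $R$-linear $\op O$-modules and not just of graded $R$-modules. This comes down to the compatibility of the Eilenberg--Zilber map with the $\F(\op O)_R$-enrichment, which in turn follows from the lax monoidality of $H_*$ and of normalized chains, but needs to be spelled out; the cleanest route is probably to phrase the construction entirely inside $\Mod{\op O_R}^{\cat\Delta^{op}}$ with its projective model structure (described in the excerpt) and to invoke the general spectral sequence of a simplicial object in a nice model category, so that $\Mod{\op O_R}$-linearity is built in from the start rather than checked by hand at the end.
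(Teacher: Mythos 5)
Your proposal follows essentially the same route as the paper's proof: form the bicomplex obtained by applying chains levelwise and normalizing in the simplicial direction, take the spectral sequence that uses the internal chain differential first and the simplicial differential second, observe first-quadrant convergence, and obtain the $R$-linear $\op O$-module structure from naturality of the whole construction. The one quibble is your opening claim that Reedy cofibrancy is ``automatic'' in the projective model structure on $\Mod{\op O}$ (it is not, since not every $\op O$-module is projectively cofibrant), but that remark is not needed: the spectral sequence converges to $H_*(|\op M_\bullet|)$ itself via the Eilenberg--Zilber/diagonal comparison, with no cofibrancy hypotheses, exactly as in the paper.
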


\begin{proof}
The homology $H_*(|\op M_\bullet|)$ is computed by the total complex of the double complex obtained by first applying the singular chains functor $C_*(-;R)$ levelwise to $\op M_\bullet$ and then using the normalized complex functor $N_*$ of the Dold--Kan correspondence to pass from simplicial chain complexes to double complexes over $R$. The desired spectral sequence is one of the two spectral sequences associated to this bicomplex; specifically, it is the spectral sequence obtained by using the differential derived from the singular chains functor first and the simplicial differential second. This spectral sequence is concentrated in the first quadrant, hence convergent.

By naturality, this is a spectral sequence of $R$-linear $\op O$-modules.
\end{proof}

Specializing now to Boardman--Vogt tensor products of modules, we obtain a spectral sequence converging from the (derived) Boardman--Vogt tensor product of homology modules to the homology of the (derived) Boardman--Vogt tensor product.

\begin{theorem}\label{thm:general kuenneth}
Let $\op O$ and $\op P$ be simplicial operads, $\op M$ an $\op O$-module, and $\op N$ a $\op P$-module, and assume that all four are $R$-projective. There is a natural, convergent spectral sequence 
\[H_p\big(H_*(\op M)\bvt^\mathbb{L} H_*(\op N)\big)_q\implies H_{p+q}\left(\op M\bvt^\mathbb{L}\op N\right)\] of $R$-linear $\op O\bvt \op P$-modules.
\end{theorem}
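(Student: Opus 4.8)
The plan is to deduce Theorem \ref{thm:general kuenneth} from Proposition \ref{prop:linearization spectral sequence} by exhibiting the derived Boardman--Vogt tensor product $\op M\bvt^{\mathbb L}\op N$ as the geometric realization of a simplicial $\op O\bvt\op P$-module $\op M_\bullet$ whose homology is the derived linearized tensor product $H_*(\op M)\bvt^{\mathbb L}H_*(\op N)$. Concretely, I would set $\op M_\bullet$ to be the bisimplicial object $B_\bullet^{\mathbb K_{\op O}}(\op M)\bvt B_\bullet^{\mathbb K_{\op P}}(\op N)$, or rather its diagonal, so that $|\op M_\bullet|\cong B_{\cat N}^{\op O}(\op M)\bvt B_{\cat N}^{\op P}(\op N)=\op M\bvt^{\mathbb L}\op N$ by the discussion following Lemma \ref{lem:bvt-leftquillen} (using that $-\bvt-$ commutes with geometric realization, being a left adjoint in each variable, together with Lemma \ref{lem:bar cofibrant} to identify this with the derived tensor product). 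Then I would apply Proposition \ref{prop:linearization spectral sequence} to this $\op M_\bullet$, yielding a convergent spectral sequence $E^2_{p,q}\cong H_p(H_*(\op M_\bullet))_q\implies H_{p+q}(\op M\bvt^{\mathbb L}\op N)$ of $R$-linear $\op O\bvt\op P$-modules.

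The remaining task is then to identify $H_*(\op M_\bullet)$ with $H_*(\op M)\bvt^{\mathbb L}H_*(\op N)$ as a simplicial $R$-linear $\op O\bvt\op P$-module, so that its $E^2$-page matches the claim. In each bisimplicial degree, $B_p^{\mathbb K_{\op O}}(\op M)$ and $B_q^{\mathbb K_{\op P}}(\op N)$ are totally free $\op O$- and $\op P$-modules respectively (being iterated applications of $\mathbb F_{\op O},\mathbb F_{\op P}$), and they are $R$-projective (hence $R$-flat) since $\op O$, $\op P$, $\op M$, $\op N$ all are and totally free modules on flat sequences over flat operads stay flat. So Lemma \ref{lem:bv vs linear bv} gives a natural isomorphism $H_*(B_p^{\mathbb K_{\op O}}(\op M))\bvt H_*(B_q^{\mathbb K_{\op P}}(\op N))\xrightarrow{\cong}H_*(B_p^{\mathbb K_{\op O}}(\op M)\bvt B_q^{\mathbb K_{\op P}}(\op N))$, natural in both simplicial directions. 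By Lemma \ref{lem:bar vs linear bar} (and its evident bisimplicial analogue), $H_*(B_\bullet^{\mathbb K_{\op O}}(\op M))\cong B_\bullet^{\mathbb K_{\op O_R}}(H_*(\op M))$ and similarly for $\op P$, using $R$-flatness of $\op O$ and $\op P$. Combining, $H_*(\op M_\bullet)\cong B_\bullet^{\mathbb K_{\op O_R}}(H_*(\op M))\bvt B_\bullet^{\mathbb K_{\op P_R}}(H_*(\op N))$, whose realization is precisely the model for $H_*(\op M)\bvt^{\mathbb L}H_*(\op N)$ afforded by Lemma \ref{lem:bar cofibrant rlin} and Lemma \ref{lem:bvt-leftquillen-lin}; here the projectivity hypotheses on $\op O$, $\op P$, $\op M$, $\op N$ are exactly what is needed for Lemma \ref{lem:bar cofibrant rlin} to apply in each simplicial degree. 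Taking homology of this simplicial $R$-linear module computes $H_*$ of the realization of the bar construction, which is $H_*(\op M)\bvt^{\mathbb L}H_*(\op N)$, so $E^2_{p,q}\cong H_p(H_*(\op M)\bvt^{\mathbb L}H_*(\op N))_q$.

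The main obstacle I anticipate is bookkeeping with the two simplicial directions: Proposition \ref{prop:linearization spectral sequence} takes a single simplicial module, so I must either pass to the diagonal of the bisimplicial object and argue that $E^2$ of the diagonal spectral sequence still computes $H_p$ of the bar-construction realization, or reorganize the double complex so that the correct filtration is used. The cleanest route is probably to note that $|B_\bullet^{\mathbb K_{\op O}}(\op M)\bvt B_\bullet^{\mathbb K_{\op P}}(\op N)|$ (realization in both directions simultaneously) agrees with the realization of the diagonal, and that applying Proposition \ref{prop:linearization spectral sequence} to the diagonal produces a spectral sequence whose $E^2$ is the homology of the ``homology of the diagonal,'' which by the Dold--Kan / Eilenberg--Zilber comparison equals the homology of the total complex of the bicomplex $N_*(H_*(\op M_{p}))_q$---and that total complex is by construction the one computing $H_*(\op M)\bvt^{\mathbb L}H_*(\op N)$. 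The only subtlety worth flagging explicitly is ensuring naturality of all the identifications is preserved so that the spectral sequence genuinely lives in $R$-linear $\op O\bvt\op P$-modules; this follows because every comparison map invoked (the lax monoidal structure maps, the maps of Lemmas \ref{lem:bv vs linear bv}, \ref{lem:bar vs linear bar}, \ref{lem:free module homology}) is natural in the relevant operad and module variables, so the isomorphisms assemble into isomorphisms of simplicial $R$-linear $\op O\bvt\op P$-modules, and Proposition \ref{prop:linearization spectral sequence} then delivers the structured spectral sequence directly.
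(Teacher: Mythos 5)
Your proposal is correct and follows essentially the same route as the paper's own proof: realize $\op M\bvt^{\mathbb L}\op N$ as the realization of the diagonal of the bisimplicial module $B_\bullet^{\mathbb K_{\op O}}(\op M)\bvt B_\bullet^{\mathbb K_{\op P}}(\op N)$ via Lemmas \ref{lem:bar cofibrant} and \ref{lem:bvt-leftquillen}, apply Proposition \ref{prop:linearization spectral sequence} to that diagonal, and identify $E^2$ levelwise using Lemmas \ref{lem:bv vs linear bv}, \ref{lem:bar vs linear bar}, and \ref{lem:bar cofibrant rlin}. The diagonal bookkeeping and naturality issues you flag are handled in the paper exactly as you suggest.
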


In the statement above, we consider $H_*(\op M)$ and $H_*(\op N)$ as constant objects in $\Mod{\op O_R}^{\cat\Delta^{op}}$  and $\Mod{\op P_R}^{\cat\Delta^{op}}$, respectively.  Their derived Boardman--Vogt tensor product is an object in $\Mod{(\op O\bvt\op P)_R}^{\cat\Delta^{op}}$ that is, in general, not constant.   The external homology is computed by applying the normalized chains functor  to the simplicial, graded $R$-module, which gives rise to a chain complex in graded $R$-modules, then computing homology. This homology is graded by homological degree $p$ and internal degree $q$.

\begin{proof}[Proof of Theorem \ref{thm:general kuenneth}]
We observe that 
\begin{align*}
\op M \bvt^\mathbb{L}\op N&\simeq B^{\op O}_{\cat N}(\op M)\bvt B^{\op P}_{\cat N}(\op N)&\quad (\ref{lem:bar cofibrant})\\
&\cong \left||B_\bullet^{\mathbb K_{{\op O}}}(\op M)\bvt B_\bullet^{\mathbb K_{{\op P}}}(\op N)|\right|&\quad (\ref{lem:bvt-leftquillen})\\
&\cong |\d(B_\bullet^{\mathbb K_{{\op O}}}(\op M)\bvt B_\bullet^{\mathbb K_{{\op O}}}(\op N))|.
\end{align*}
Applying Proposition \ref{prop:linearization spectral sequence} to the simplicial $\op O\bvt\op P$-module $\d(B_\bullet^{\mathbb K_{{\op O}}}(\op M)\bvt B_\bullet^{\mathbb K_{{\op O}}}(\op N))$, we obtain a spectral sequence converging to the homology of $\op M\bvt^\mathbb{L}\op N$. It remains to identify the $E^2$-page of this spectral sequence. To do so, we note that 
\begin{align*}
H_*(\d(B_\bullet^{\mathbb K_{\op O}}(\op M)\bvt B_\bullet^{\mathbb K_{\op P}}(\op N)))&\cong \d(H_*(B_\bullet^{\mathbb{K}_{\op O}}(\op M))\bvt H_*(B_\bullet^{\mathbb{K}_{\op P}}(\op N)))&\quad (\ref{lem:bv vs linear bv})\\
&\cong \d\big(B_\bullet^{\mathbb{K}_{\op O_R}}(H_*(\op M))\bvt B_\bullet^{\mathbb{K}_{\op P_R}}(H_*(\op N)))&\quad (\ref{lem:bar vs linear bar})\\
&\simeq H_*(\op M)\bvt^\mathbb{L} H_*(\op N).&\quad (\ref{lem:bar cofibrant rlin})
\end{align*} Note that only the last step uses the assumption of projectivity.
\end{proof}

\section{Application to configuration spaces}

In this section, we combine the general considerations of Section \ref{sec:KSS} with the results of \cite{DwyerHessKnudsen:CSP} to prove Theorem \ref{thm:kuenneth ss}, which we deduce as a special case of a general result concerning products of manifolds equipped with tangential structures.

\subsection{Reminders on structured manifolds}
We review the conventions of \cite[4.1]{DwyerHessKnudsen:CSP}, following \cite[V.5-10]{andrade}, on manifolds equipped with tangential structures. 

\begin{definition}
Let $M$ be an $m$-manifold and $G\to GL(m)$ a continuous homomorphism, and write $\Fr_{M}$ for the frame bundle of the tangent bundle of $M$. A \emph{$G$-framing} on $M$ is a principal $G$-bundle $\Fr^G_M$ together with an isomorphism \[\varphi_M:\Fr^G_M\times_G GL(m)\xrightarrow{\cong} \Fr_M\] of principal $GL(m)$-bundles covering the identity. A \emph{framing} is a $G$-framing with $G$ trivial.
\end{definition}

We often abbreviate to $M$ the triple constituting a $G$-framed manifold, leaving all other data implicit. Examples of canonically $G$-framed manifolds include Euclidean $m$-space and disjoint unions and open subsets of $G$-framed manifolds. The Cartesian product of a $G$-framed manifold and an $H$-framed manifold is canonically $G\times H$-framed. Combining these examples, the configuration space $\Conf_k(M)\subseteq M^k$ is canonically $G^k$-framed whenever $M$ is $G$-framed.

\begin{definition}
The $G$-\emph{framed configuration space} of $k$ points in $M$ is the $G^k\rtimes\Sigma_k$-space \[\Conf^G_k(M):=\Fr_{\Conf_k(M)}^{G^k},\] where $\Sigma_k$ acts on $G^k$ by permuting the factors.
\end{definition}

The collection of $G$-framed manifolds forms a category under the following type of map.

\begin{definition}
Let $M_1$ and $M_2$ be $G$-framed manifolds. A $G$-\emph{framed embedding} consists of an embedding $f:M_1\to M_2$, a bundle map $\tilde f:\Fr_{M_1}^G\to \Fr_{M_2}^G$, and a $GL(m)$-equivariant homotopy $h:\Fr_{M_1}\times[0,1]\to \Fr_{M_2}$ from $Df$ to  the composite $\varphi_{M_2}\circ (\tilde f\times_GGL(m))\circ\varphi_{M_1}^{-1}$, where $Df:\Fr_{M_1}\to \Fr_{M_2}$ is the induced bundle map. We require that $\tilde f$ and $h$ each cover $f$.
\end{definition}

The set $\Emb^G(M_1, M_2)$ carries a natural topology in which composition is continuous, where the composite of $G$-framed embeddings is defined using composition of embeddings and bundle maps and \emph{pointwise} composition of homotopies. We denote the resulting topological category, which is symmetric monoidal under disjoint union, by $\Mfld^G_m$.

Formation of structured configuration spaces extends to a functor $\Conf_k^G:\Mfld_m^G\to \cat{Top},$ which is closely related to certain spaces of $G$-framed embeddings.

\begin{proposition}[{\cite[14.4]{andrade}}]\label{prop:embedding and conf}
For each $k\geq0$, the natural transformation \[\Emb^G(\amalg_k\mathbb{R}^m, -)\longrightarrow\Conf^G_k(-)\] induced by evaluation at the origin is a $G^k\rtimes \Sigma_k$-equivariant weak equivalence.
\end{proposition}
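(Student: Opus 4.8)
This is \cite[14.4]{andrade}; the plan is the classical ``zoom in at the origin'' argument, carried out with the tangential data in tow. First, observe that the natural transformation in question sends a $G$-framed embedding $(f,\tilde f,h)\colon\amalg_k\mathbb{R}^m\to M$ to the $k$-tuple of frames $\big(\tilde f(0_1,e),\dots,\tilde f(0_k,e)\big)$, which lies in $(\Fr^G_M)^{\times k}$ over the configuration $\big(f(0_1),\dots,f(0_k)\big)\in\Conf_k(M)$ and hence defines a point of $\Conf^G_k(M)=\Fr^{G^k}_{\Conf_k(M)}$; it is manifestly natural in $M\in\Mfld^G_m$ and $G^k\rtimes\Sigma_k$-equivariant. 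I would treat $k=1$ first and then bootstrap.

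For $k=1$, the starting point is the classical fact that the derivative-at-the-origin map $\Emb(\mathbb{R}^m,M)\to\Fr_M$ is a weak equivalence: after a preliminary deformation retraction onto the subspace of embeddings with small image (using a diffeomorphism $\mathbb{R}^m\cong B_1(0)$), the rescaling $f\mapsto\big(x\mapsto\exp_{f(0)}(t^{-1}\exp^{-1}_{f(0)}f(tx))\big)$ contracts $\Emb(\mathbb{R}^m,M)$ onto the subspace of exponential-linear embeddings, which maps isomorphically to $\Fr_M$. To upgrade this to the $G$-framed statement, I would use that, for a fixed underlying embedding $f$ of the contractible manifold $\mathbb{R}^m$, a bundle lift $\tilde f$ of $f$ to $\Fr^G_M$ is determined up to contractible ambiguity by its value at the origin, and the homotopy $h$ is likewise determined up to contractible ambiguity by its value at the origin. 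Threading this through yields a square
\[\xymatrix{
\Emb^G(\mathbb{R}^m,M)\ar[r]\ar[d] & \Fr^G_M\ar[d]^-{q}\\
\Emb(\mathbb{R}^m,M)\ar[r] & \Fr_M
}\]
that commutes up to the homotopy recorded by $h$, with $q$ induced by $\varphi_M$; a contractibility-over-$\mathbb{R}^m$ argument shows it is a homotopy pullback, so the top map is a weak equivalence because the bottom one is.

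For general $k$, I would iterate, using that restricting a $G$-framed embedding of $\amalg_k\mathbb{R}^m$ to its first $k-1$ components defines a fibration $\Emb^G(\amalg_k\mathbb{R}^m,M)\to\Emb^G(\amalg_{k-1}\mathbb{R}^m,M)$ with fibre over $f$ equal to $\Emb^G(\mathbb{R}^m,M')$, where $M'=M\setminus\overline{f(\amalg_{k-1}\mathbb{R}^m)}$ is an open $G$-framed submanifold homotopy equivalent to $M$ with $k-1$ points deleted. This sits over the Fadell--Neuwirth fibration $\Conf^G_k(M)\to\Conf^G_{k-1}(M)$, whose fibre over the corresponding framed configuration is $\Fr^G_{M''}=\Conf^G_1(M'')$ with $M''=M$ minus those $k-1$ points; on fibres the comparison map factors as $\Emb^G(\mathbb{R}^m,M')\to\Fr^G_{M'}\to\Fr^G_{M''}$, the first arrow being the $k=1$ case and the second the equivalence of frame bundles induced by $M'\hookrightarrow M''$, and on base spaces it is a weak equivalence by induction, so the map of total spaces is a weak equivalence. (Alternatively, one can rescale all $k$ disks at once, using a configuration-dependent preliminary shrinking that keeps all images in disjoint exponential balls.) All of this is natural, and the shrinkings, rescalings, and fibrewise contractions can be performed $G^k\rtimes\Sigma_k$-equivariantly, giving a $G^k\rtimes\Sigma_k$-equivariant weak equivalence.

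The hard part is entirely in the classical differential-topology input: checking that the preliminary shrinking and the rescaling formula stay within embedding spaces, converge, and --- for $k>1$ --- preserve disjointness of images, all continuously in families over $\Conf_k(M)$, together with the claim that the restriction maps above are genuine fibrations. These points are carried out in detail in \cite{andrade}; granting them, promoting to the $G$-framed statement is the formal homotopy-pullback and path-space bookkeeping sketched here.
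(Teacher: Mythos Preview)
The paper does not supply its own proof of this proposition; it is simply recorded as a citation to \cite[14.4]{andrade} and used thereafter as a black box. Your sketch therefore goes well beyond what the paper does, and it follows the standard line of argument: for $k=1$, combine the classical rescaling contraction of $\Emb(\mathbb{R}^m,M)$ onto exponential-linear embeddings with a fibrewise analysis of the extra $(\tilde f,h)$-data over the contractible source $\mathbb{R}^m$; for general $k$, induct along restriction fibrations sitting over the Fadell--Neuwirth tower. That outline is correct and is essentially what one finds in Andrade's thesis.

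One point worth tightening: the square you draw, with right vertical $q\colon\Fr^G_M\to\Fr_M$ induced by $\varphi_M$, is not literally a homotopy pullback in general --- the homotopy type of the fibers of $q$ depends on the kernel and image of $G\to GL(m)$ and need not match the fibers of the left vertical. The argument you actually want (and which your prose correctly describes) is more direct: fix $f$ and observe that, because $\mathbb{R}^m$ is contractible, the space of bundle lifts $\tilde f$ is equivalent via evaluation at $0$ to the fiber of $\Fr^G_M$ over $f(0)$, and, given $\tilde f$, the space of homotopies $h$ is a path space in $GL(m)$; assembling these shows that evaluation at the origin identifies the fiber of $\Emb^G(\mathbb{R}^m,M)\to\Emb(\mathbb{R}^m,M)$ over $f$ with the fiber of $\Fr^G_M\to\Fr_M$ over $Df(0)$ \emph{together with} a path in $GL(m)$ witnessing the compatibility. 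So the conclusion is right, but the clean packaging is a map of fibration sequences over $M$ (or a direct fiberwise comparison) rather than the square you wrote. This is exactly the ``formal bookkeeping'' you allude to in your last paragraph, so the gap is cosmetic rather than substantive.
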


\subsection{Skew little cubes and configuration spaces} Denote by $\Lambda(m)\subseteq GL(m)$ the subgroup of diagonal matrices with positive entries. 

\begin{definition}[{\cite[Def. 4.9]{DwyerHessKnudsen:CSP}}]\label{def:dilation rep}
A \emph{dilation representation} is a continuous group homomorphism $\rho:G\to GL(m)$ such that $\mathrm{im}(\rho)=(\mathrm{im}(\rho)\cap O(m))\cdot \Lambda(m).$
\end{definition}

We fix a dilation representation $\rho:G\to GL(m)$, which is left implicit in the notation, and write $\Box^m:=(-1,1)^m\subseteq \mathbb{R}^m$ for the open $m$-cube of side-length 2 centered at the origin.

\begin{definition}[{\cite[Def. 4.11]{DwyerHessKnudsen:CSP}}]
A $G$-\emph{skew little cube} is a pair $(v,g)$ with $v\in\Box^m$ and $g\in G$ such that the formula $f_{v,g}(x)=\rho(g)x+v$ specifies an embedding $f_{v,g}:\Box^m\to \Box^m$. A \emph{little $m$-cube} is a $\Lambda(m)$-skew little cube.
\end{definition}

The space $\op {C}^G_m(k)$ of $k$-tuples of $G$-skew little cubes with pairwise disjoint images forms an operad, with $\op {C}^{\Lambda(m)}_m$ recovering the usual little $m$-cubes operad $\op C_m$.

\begin{theorem}[{\cite[Thm. 4.14]{DwyerHessKnudsen:CSP}}]\label{thm:cubes and euclidean}
Let $\rho:G\to GL(m)$ be a dilation representation. There is a canonical weak equivalence of operads \[\varphi:\op C_m^G\to \End_{\Mfld_m^G}(\mathbb{R}^m).\]
\end{theorem}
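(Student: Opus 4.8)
The plan is to write down the comparison map $\varphi$ by hand, check that it is a morphism of operads essentially by unwinding definitions, and then prove that it is a weak equivalence in each arity by interposing structured configuration spaces, invoking Proposition \ref{prop:embedding and conf} for one of the two resulting legs.

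For the construction, fix a $G$-framed diffeomorphism $\mathbb{R}^m\xrightarrow{\cong}\Box^m$ (e.g.\ a coordinatewise rescaling, whose fibrewise derivative lies in $\Lambda(m)$). A $G$-skew little cube $(v_i,g_i)$ is nothing but the affine embedding $f_{v_i,g_i}\colon\Box^m\to\Box^m$ equipped with its \emph{canonical} $G$-framed structure: since $Df_{v_i,g_i}$ is the constant linear map $\rho(g_i)$, the bundle lift may be taken to be the constant map to $g_i$ and the interpolating $GL(m)$-equivariant homotopy to be constant. A $k$-tuple of skew cubes with pairwise disjoint images thus assembles, after conjugating by the chosen diffeomorphism, into a morphism $\amalg_k\mathbb{R}^m\to\mathbb{R}^m$ in $\Mfld^G_m$, i.e.\ a point of $\End_{\Mfld^G_m}(\mathbb{R}^m)(k)=\Emb^G(\amalg_k\mathbb{R}^m,\mathbb{R}^m)$. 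This assignment is manifestly continuous and $\Sigma_k$-equivariant, and it is compatible with operadic composition because composing nested skew cubes corresponds to composing the associated affine embeddings, while the homotopy data, being constant, composes to constant homotopy data under the pointwise composition law of $\Mfld^G_m$; the unit is visibly preserved. Hence $\varphi$ is a morphism of operads.

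To see that $\varphi$ is an arity-wise weak equivalence, evaluate the skew cubes at their centres, retaining the group elements; since disjoint images have distinct centres, this yields a map $\mathrm{ev}\colon\op C^G_m(k)\to\Conf^G_k(\mathbb{R}^m)$ fitting into a commuting triangle
\[
\begin{CD}
\op C^G_m(k) @>{\varphi}>> \Emb^G(\amalg_k\mathbb{R}^m,\mathbb{R}^m)\\
@V{\mathrm{ev}}VV @VV{\mathrm{ev}_0}V\\
\Conf^G_k(\mathbb{R}^m) @= \Conf^G_k(\mathbb{R}^m).
\end{CD}
\]
The right-hand vertical map is a weak equivalence by Proposition \ref{prop:embedding and conf}, so by two-out-of-three it suffices to prove that $\mathrm{ev}$ is a weak equivalence. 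Trivializing $\Fr^G_{\mathbb{R}^m}\cong\mathbb{R}^m\times G$ identifies $\Conf^G_k(\mathbb{R}^m)$ with $\Conf_k(\mathbb{R}^m)\times G^k$ and $\mathrm{ev}$ with the inclusion of the open subspace consisting of those $\big((v_i)_i,(g_i)_i\big)$ for which the affine maps $x\mapsto\rho(g_i)x+v_i$ are pairwise disjoint embeddings of $\Box^m$ into itself.

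I would prove that this open inclusion is a weak equivalence by exhibiting a deformation retraction of $\Conf_k(\mathbb{R}^m)\times G^k$ onto its image that radially contracts the configuration of centres into $\Box^m$ while simultaneously scaling each skew cube towards its own centre, so that in the limit all cubes lie in $\Box^m$ and are pairwise disjoint. Here the dilation representation hypothesis $\mathrm{im}(\rho)=(\mathrm{im}(\rho)\cap O(m))\cdot\Lambda(m)$ is exactly what is needed: since $\Lambda(m)$ is a group it gives $\mathrm{im}(\rho)\cdot\Lambda(m)=\mathrm{im}(\rho)$, so post-composing $\rho(g_i)$ with a shrinking positive-diagonal scaling stays inside $\mathrm{im}(\rho)$ and the scaled affine map is again of the form $f_{v,g}$ for some $g\in G$. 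The hard part will be making this deformation genuinely precise and continuous in $G$ — in particular, lifting the scaling path from $\mathrm{im}(\rho)$ to $G$, letting the shrinking rate depend on $g_i$ (the operator norm of $\rho(g_i)$ being unbounded), and maintaining all disjointness and containment constraints throughout — and this is precisely the place where the dilation hypothesis is indispensable. These details are carried out in \cite[Section 4]{DwyerHessKnudsen:CSP} following \cite[V]{andrade}; the statement is \cite[Thm.~4.14]{DwyerHessKnudsen:CSP}.
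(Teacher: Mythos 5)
This theorem is not proved in the paper you were given: it is imported verbatim by citation from \cite[Thm.~4.14]{DwyerHessKnudsen:CSP}, so there is no internal argument to compare yours against. Measured against the cited source, your outline has the right shape: defining $\varphi$ by regarding a $G$-skew little cube $(v,g)$ as the affine embedding $f_{v,g}$ with constant bundle lift $g$ and (essentially) constant interpolating homotopy, and then checking the arity-wise equivalence by factoring through structured configuration spaces and invoking Proposition \ref{prop:embedding and conf} for the leg $\Emb^G(\amalg_k\mathbb{R}^m,\mathbb{R}^m)\to\Conf^G_k(\mathbb{R}^m)$, is exactly the intended strategy, and the identification of $\mathrm{ev}\colon\op C^G_m(k)\to\Conf_k(\mathbb{R}^m)\times G^k$ with an open inclusion is correct.

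The gap is that the one genuinely nontrivial step --- that this open inclusion is a weak equivalence --- is not actually carried out. Your sketch of a deformation retraction hides the real content: shrinking a skew cube forces you to replace $g_i$ by some $g_i'\in G$ with $\rho(g_i')$ equal to the rescaled linear part, and since $\rho$ is neither assumed injective nor assumed to admit a continuous section, ``lifting the scaling path from $\mathrm{im}(\rho)$ to $G$'' is not automatic; moreover the rate of shrinking must be chosen continuously over a parameter space on which $\|\rho(g_i)\|$ is unbounded, so a single formula for a strict retraction is not obviously available (one typically argues on compact families, or via a fibration/section argument over configurations, as in the cited source). You acknowledge these difficulties but then resolve them by appealing to \cite[Sec.~4]{DwyerHessKnudsen:CSP} --- i.e., to the very theorem being proved --- which makes the argument circular as a standalone proof. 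To complete it you would need to supply that homotopical input yourself, for instance by showing that every compact family in $\Conf_k(\mathbb{R}^m)\times G^k$ can be deformed into the image using the polar-type decomposition $\mathrm{im}(\rho)=(\mathrm{im}(\rho)\cap O(m))\cdot\Lambda(m)$ together with a careful treatment of the lift along $\rho$, or by exhibiting $\mathrm{ev}$ as a map of fibrations over $\Conf_k(\mathbb{R}^m)$ with weakly contractible comparison on fibers.
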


Using this map, we obtain a $\op C_m^G$-module $\op C_M^G:=\varphi^*\HHom_{\Mfld_m^G}(\mathbb{R}^m,M)$ organizing the homotopy types of the structured configuration spaces of the $G$-framed manifold $M$.

\subsection{Additivity and the main result}

Fix dilation representations $G\to GL(m)$ and $H\to GL(n)$. There are canonical maps of operads $\op C^G_m\to \op C_{m+n}^{G\times H}$ and $\op C^H_n\to \op C^{G\times H}_{m+n}$, and these two maps satisfy the interchange relations defining a map $\iota$ from the Boardman--Vogt tensor product.

\begin{conjecture}{\cite[Conf. 4.18]{DwyerHessKnudsen:CSP}}\label{conj:additivity}
Let $G\to GL(m)$ and $H\to GL(n)$ be dilation representations. The map \[\iota:\op C^G_m\otimes\op C_n^H\to \op C_{m+n}^{G\times H}\] is a weak equivalence.
\end{conjecture}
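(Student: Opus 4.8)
The statement I want to prove is that the interchange map $\iota:\op C^G_m\otimes\op C_n^H\to \op C_{m+n}^{G\times H}$ is a weak equivalence of operads, i.e., that the skew little cubes operads are ``additive'' in the same sense that the ordinary little cubes operads are. My strategy is to reduce the structured statement to the unstructured additivity theorem of Dunn (and its reformulations by Brinkmeier, Fiedorowicz--Vogt, and Lurie) by carefully tracking the group actions. First I would recall that an arity-$k$ operation in $\op C_m^G\otimes\op C_n^H$ is, roughly, generated by configurations of $G$-skew cubes in $\Box^m$ and $H$-skew cubes in $\Box^n$ subject to the interchange relation of Definition~\ref{definition:bv-op}, while an arity-$k$ operation in $\op C_{m+n}^{G\times H}$ is a configuration of $(G\times H)$-skew cubes in $\Box^{m+n}=\Box^m\times\Box^n$; the map $\iota$ sends a pair $(c,d)$ of such configurations to the configuration whose $i$-th cube is the product of the $i$-th cube of $c$ with the $i$-th cube of $d$, after suitably reindexing as in the notation $o\star p$.

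The key structural input is the semidirect-product decomposition afforded by the dilation representation hypothesis: if $\rho:G\to GL(m)$ is a dilation representation, then $\mathrm{im}(\rho)=(\mathrm{im}(\rho)\cap O(m))\cdot\Lambda(m)$, so that, up to the homotopy equivalence of Theorem~\ref{thm:cubes and euclidean}, the space $\op C_m^G(k)$ is equivalent to $\Emb^G(\amalg_k\R^m,\R^m)$, which by Proposition~\ref{prop:embedding and conf} is $G^k\rtimes\Sigma_k$-equivariantly equivalent to $\Conf_k^G(\R^m)=\Fr_{\Conf_k(\R^m)}^{G^k}$. Since $\R^m$ is canonically framed, $\Fr_{\Conf_k(\R^m)}^{G^k}\simeq \Conf_k(\R^m)\times G^k$ up to the tangential data, and the analogous statement holds for $H$, $n$, and for $G\times H$, $m+n$. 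Thus the map $\iota$, after these identifications, becomes the map
\[
\big(\Conf_k(\R^m)\times G^k\big)\otimes\big(\Conf_k(\R^n)\times H^k\big)\longrightarrow \Conf_k(\R^{m+n})\times (G\times H)^k,
\]
and the group factors $G^k$ and $H^k$ split off, reducing the claim to the statement that the underlying map of little cubes operads $\op C_m\otimes\op C_n\to\op C_{m+n}$ is a weak equivalence — which is precisely Dunn's additivity theorem. Making this reduction precise requires checking that the Boardman--Vogt tensor product is compatible with the splitting of the equivariant configuration spaces into a ``geometric'' factor and a ``group'' factor; this is where the bulk of the careful bookkeeping lies, and I expect it to be the main obstacle. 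One must verify that the interchange relation \eqref{eqn:bv}, when transported along the equivalences above, does not mix the $G$- and $H$-directions with the geometric cube data in a way that obstructs the splitting — essentially, that the dilation hypothesis guarantees the structured embedding spaces are products ``compatibly with composition'' in the relevant sense, so that tensoring commutes with projecting away the group.

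The remaining steps are then formal: assemble the equivalences of Theorems~\ref{thm:cubes and euclidean} and~\ref{prop:embedding and conf} into a commuting square relating $\iota$ to the unstructured additivity map, invoke that $-\star-$ preserves weak equivalences between cofibrant objects (or pass to cofibrant replacements via the bar constructions of Section~\ref{sec:bvt}, using Lemma~\ref{lem:bar cofibrant}), and conclude by two-out-of-three. The one genuinely non-formal ingredient beyond Dunn's theorem is the verification that a dilation representation makes the structured little cubes operad ``split'' as a geometric operad twisted by $BG$-type data in a way stable under $\otimes$; I would isolate this as a lemma and prove it by direct inspection of the definition of $G$-skew little cube, using that $f_{v,g}(x)=\rho(g)x+v$ and that the product of two such affine embeddings in orthogonal coordinate blocks is again of this form with group element $(g,h)\in G\times H$.
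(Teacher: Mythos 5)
The statement you set out to prove is not proved in the paper at all: it is Conjecture 4.18 of \cite{DwyerHessKnudsen:CSP}, quoted here precisely as an unproven hypothesis. The authors only use it conditionally (Corollary \ref{cor:structured spectral sequence}, Theorem \ref{thm:structured collapse}) and, for the unconditional Theorem \ref{thm:kuenneth ss}, invoke only the classical case $G=\Lambda(m)$, $H=\Lambda(n)$, where the map $\iota$ is the ordinary interchange map $\op C_m\otimes\op C_n\to\op C_{m+n}$ and the result is Dunn's additivity theorem. So any complete argument you gave here would be new mathematics settling an open conjecture, and your proposal does not reach that standard.

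Concretely, the reduction you sketch has two gaps that are not bookkeeping. First, the splitting $\op C_m^G(k)\simeq \Conf_k(\R^m)\times G^k$ is only an equivalence of spaces; the operad composition of $G$-skew cubes $f_{v,g}(x)=\rho(g)x+v$ genuinely mixes the group coordinates with the affine data (composites produce group elements acting on translation vectors), so $\op C_m^G$ does not decompose as the little cubes operad times a ``group'' factor compatibly with composition --- and a fortiori not compatibly with the interchange relation \eqref{eqn:bv}. You flag this as ``the main obstacle,'' but it is exactly the content of the conjecture, not a verification to be deferred. Second, even granting such a splitting, your final ``formal'' step --- comparing $\iota$ to the unstructured additivity map and concluding by two-out-of-three --- requires the Boardman--Vogt tensor product of \emph{operads} to preserve weak equivalences (between suitably cofibrant operads). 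The homotopy-invariance results in the paper (Lemma \ref{lem:bvt-leftquillen}, Lemma \ref{lem:bar cofibrant}) concern the tensor product of operadic \emph{modules}, not of operads; the operad-level $\otimes$ is notoriously poorly behaved homotopically, which is precisely why even Dunn's theorem is a nontrivial geometric result and why the skew version remains open. Relatedly, your description of arity-$k$ operations in $\op C_m^G\otimes\op C_n^H$ as pairs of configurations is inaccurate: the tensor product is a quotient of the coproduct of operads, and its arity spaces are not products of the factors' arity spaces; only the decomposable operations $o\star p$ have the form you describe.
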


We view this conjecture as a statement of ``local additivity'' for configuration spaces. The main result of \cite{DwyerHessKnudsen:CSP} is the following global additivity statement.

\begin{theorem}{\cite[Thm. 5.7]{DwyerHessKnudsen:CSP}}
Let $G\to GL(m)$ and $H\to GL(n)$ be dilation representations, $M$ a $G$-framed $m$-manifold, and $N$ an $H$-framed $n$-manifold. If Conjecture \ref{conj:additivity} holds for $G$ and $H$, then there is a natural isomorphism 
\[\Ho( \iota^{*})(\op C_{M\times N}^{G\times H})\cong \op C_M^G\otimes^\mathbb{L} \op C_N^H\] in $ \Ho(\Mod{\op C_m^G\otimes\op C_n^H}).$
\end{theorem}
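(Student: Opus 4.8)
The plan is to build a natural comparison map $\Theta_{M,N}\colon\op C_M^G\otimes^{\mathbb{L}}\op C_N^H\to\iota^*\op C_{M\times N}^{G\times H}$, reduce by a homotopy-cosheaf argument to the case where $M$ and $N$ are finite disjoint unions of Euclidean spaces, and there deduce the statement from Conjecture~\ref{conj:additivity} together with the behaviour of the Boardman--Vogt tensor product on totally free modules (Examples~\ref{ex:bvt-seq} and~\ref{ex:bvt}). Since $\iota^*$ is the identity on underlying sequences it preserves all colimits and all weak equivalences, so $\Ho(\iota^*)=\iota^*$ and it suffices to exhibit $\Theta_{M,N}$ as a weak equivalence. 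To build $\Theta$, note that Cartesian product of framed embeddings defines, naturally in finite sets $I$ and $J$, maps $\Emb^G(\coprod_I\R^m,M)\times\Emb^H(\coprod_J\R^n,N)\to\Emb^{G\times H}(\coprod_{I\times J}\R^{m+n},M\times N)$. By Proposition~\ref{prop:embedding and conf} and Theorem~\ref{thm:cubes and euclidean} these are compatible with the $\op C_m^G$- and $\op C_n^H$-actions (through the two canonical maps into $\op C_{m+n}^{G\times H}$) and satisfy the interchange relation~(\ref{eqn:bv}); since $\mu(I,J)=I\times J$, they assemble into a map of $\op C_m^G\otimes\op C_n^H$-modules $\op C_M^G\otimes\op C_N^H\to\iota^*\op C_{M\times N}^{G\times H}$, natural in the framed manifolds $M$ and $N$. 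Deriving via the sequential bar constructions of Lemma~\ref{lem:bar cofibrant} produces $\Theta_{M,N}$.

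For the reduction, the key input is that $M\mapsto\op C_M^G$ on $\Mfld_m^G$, and likewise its $H$- and $(G\times H)$-variants, is the homotopy left Kan extension of its restriction to the full subcategory of finite disjoint unions of framed disks---the ``analyticity'' of structured configuration spaces, for which I would appeal to \cite{andrade} and the apparatus of \cite{DwyerHessKnudsen:CSP}. In particular, letting $U$ and $V$ range over finite disjoint unions of framed disks in $M$ and $N$, and using that the products $U\times V$---again finite disjoint unions of framed disks---exhaust $M\times N$, one gets $\op C_{M\times N}^{G\times H}\simeq\hocolim_{(U,V)}\op C_{U\times V}^{G\times H}$; applying $\iota^*$, which preserves homotopy colimits, yields $\iota^*\op C_{M\times N}^{G\times H}\simeq\hocolim_{(U,V)}\iota^*\op C_{U\times V}^{G\times H}$. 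On the other side, $-\otimes^{\mathbb{L}}\op C_N^H$ and $\op C_U^G\otimes^{\mathbb{L}}-$ are left derived functors of left adjoints (Lemmas~\ref{lem:bar cofibrant} and~\ref{lem:bvt-leftquillen}), hence preserve homotopy colimits, so $\op C_M^G\otimes^{\mathbb{L}}\op C_N^H\simeq\hocolim_{(U,V)}(\op C_U^G\otimes^{\mathbb{L}}\op C_V^H)$ by the Fubini theorem for homotopy colimits. By naturality, $\Theta_{M,N}$ is the homotopy colimit of the maps $\Theta_{U,V}$, so it suffices to prove that $\Theta_{U,V}$ is a weak equivalence when $U=\coprod_a\R^m$ and $V=\coprod_b\R^n$.

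In that case $U\times V=\coprod_{ab}\R^{m+n}$, and $\op C_{\coprod_a\R^m}^G$ is weakly equivalent, via Theorem~\ref{thm:cubes and euclidean}, to the totally free $\op C_m^G$-module $\mathbb{F}_{\op C_m^G}(e_a)$ on the sequence $e_a$ which is a point in arity $a$ and $\varnothing$ elsewhere; similarly for $V$ and for $U\times V$. Being totally free, these modules are cofibrant, so the derived tensor product coincides with the underived one, and Examples~\ref{ex:bvt-seq} and~\ref{ex:bvt} give $\op C_{\coprod_a\R^m}^G\otimes^{\mathbb{L}}\op C_{\coprod_b\R^n}^H\simeq\mathbb{F}_{\op C_m^G}(e_a)\otimes\mathbb{F}_{\op C_n^H}(e_b)\cong\mathbb{F}_{\op C_m^G\otimes\op C_n^H}(e_a\otimes e_b)\cong\mathbb{F}_{\op C_m^G\otimes\op C_n^H}(e_{ab})$. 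On the other hand $\iota^*\op C_{\coprod_{ab}\R^{m+n}}^{G\times H}\simeq\iota^*\mathbb{F}_{\op C_{m+n}^{G\times H}}(e_{ab})$, and because $\iota$ is a weak equivalence of operads by Conjecture~\ref{conj:additivity}, the adjunction $(\iota_!,\iota^*)$ is a Quillen equivalence, whence this module is weakly equivalent to $\mathbb{F}_{\op C_m^G\otimes\op C_n^H}(e_{ab})$. A direct inspection shows that $\Theta_{U,V}$ realizes this chain of identifications; this is the only step that uses the conjecture.

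The step I expect to be the main obstacle is the analyticity input of the second paragraph: that the structured configuration-space functor of a framed manifold is recovered as a homotopy colimit over its category of framed disjoint unions of disks, compatibly with taking products of manifolds. This is where genuine manifold topology, and the relationship with embedding calculus, enters; by contrast the interchange of homotopy colimits with the derived Boardman--Vogt tensor product, the naturality of $\Theta$, and the Euclidean computation are formal consequences of the framework developed in Sections~\ref{sec:bvt} and~\ref{sec:KSS}.
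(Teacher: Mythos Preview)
The paper does not prove this theorem; it is quoted verbatim from \cite[Thm.~5.7]{DwyerHessKnudsen:CSP} and used as a black box. There is therefore no proof here to compare your proposal against.

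That said, your outline is the correct strategy and matches the architecture of the argument in the cited reference: one constructs a natural comparison map, reduces via a homotopy-colimit-over-disks argument to the local case of finite disjoint unions of Euclidean spaces, and there invokes Conjecture~\ref{conj:additivity} together with the identification of the Boardman--Vogt tensor product of totally free modules (Example~\ref{ex:bvt}). Your identification of the ``analyticity'' step as the substantive input is accurate; indeed, the present paper uses exactly this kind of hypercover/disk-colimit argument for a related purpose in the proof of Lemma~\ref{lem:hocolim over disks}, and the reference \cite[Lem.~5.5]{DwyerHessKnudsen:CSP} invoked there is precisely the analyticity statement you would need. One small point: in the Euclidean case the sequence $e_a$ should be concentrated in arity~$a$ with value the discrete set of labelings $\{1,\dots,a\}\xrightarrow{\cong}\pi_0(U)$ rather than a single point (equivalently, the free module is on the represented functor, not a one-point sequence), but this does not affect the argument.
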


Combining this result with Theorem \ref{thm:general kuenneth}, we obtain the following consequence.

\begin{corollary}\label{cor:structured spectral sequence}
Let $G\to GL(m)$ and $H\to GL(n)$ be dilation representations, $M$ a $G$-framed $m$-manifold, and $N$ an $H$-framed $n$-manifold such that $G$, $H$, $\Conf^G_k(M)$ and $\Conf_\ell^H(N)$ are all $R$-projective, and assume that Conjecture \ref{conj:additivity} holds for $G$ and $H$. There is a natural, convergent spectral sequence \[E_{p,q}^2\cong H_p\left(H_*\left(\Conf^G(M);R\right)\star^\mathbb{L} H_*\left(\Conf^H(N);R\right)\right)_q\implies H_{p+q}\left(\Conf^{G\times H}(M\times N);R\right)\] of $R$-linear $\op C^{G\times H}_{m+n}$-modules.
\end{corollary}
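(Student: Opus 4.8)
The plan is to derive Corollary~\ref{cor:structured spectral sequence} by feeding the global additivity theorem of \cite{DwyerHessKnudsen:CSP} into the general operadic K\"unneth spectral sequence of Theorem~\ref{thm:general kuenneth}. First I would set $\op O=\op C_m^G$ and $\op P=\op C_n^H$, and take $\op M=\op C_M^G$ and $\op N=\op C_N^H$ as the relevant modules. Under Proposition~\ref{prop:embedding and conf} the module values $\op C_M^G(k)$ are weakly equivalent to the structured configuration spaces $\Conf^G_k(M)$, and similarly for $N$; since $G$, $H$, $\Conf^G_k(M)$, and $\Conf^H_\ell(N)$ are all assumed $R$-projective, and the operads $\op C_m^G$, $\op C_n^H$ are built from $G$ and $H$ (so are $R$-projective as well), the projectivity hypotheses of Theorem~\ref{thm:general kuenneth} are satisfied. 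Applying that theorem verbatim produces a natural, convergent spectral sequence
\[
E^2_{p,q}\cong H_p\big(H_*(\op C_M^G)\star^{\mathbb L}H_*(\op C_N^H)\big)_q\implies H_{p+q}\big(\op C_M^G\star^{\mathbb L}\op C_N^H\big)
\]
of $R$-linear $\op C_m^G\star \op C_n^H$-modules.

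Next I would identify the abutment. By the main theorem of \cite{DwyerHessKnudsen:CSP} quoted above, the assumption that Conjecture~\ref{conj:additivity} holds for $G$ and $H$ gives a natural isomorphism $\Ho(\iota^*)(\op C_{M\times N}^{G\times H})\cong \op C_M^G\star^{\mathbb L}\op C_N^H$ in $\Ho(\Mod{\op C_m^G\star \op C_n^H})$, and hence an isomorphism on homology $H_{p+q}(\op C_M^G\star^{\mathbb L}\op C_N^H)\cong H_{p+q}(\op C_{M\times N}^{G\times H})$, compatibly with the module structure via restriction along $\iota$. Again invoking Proposition~\ref{prop:embedding and conf}, $H_*(\op C_{M\times N}^{G\times H})\cong H_*(\Conf^{G\times H}(M\times N);R)$, and likewise $H_*(\op C_M^G)\cong H_*(\Conf^G(M);R)$ and $H_*(\op C_N^H)\cong H_*(\Conf^H(N);R)$ as $R$-linear operadic modules over $H_*(\op C_m^G)$ and $H_*(\op C_n^H)$ respectively (here one uses $R$-flatness of the relevant spaces, which follows from $R$-projectivity, so that the linearization functors and $\star$ are computed correctly, cf.\ Lemmas~\ref{lem:box tensor flat} and~\ref{lem:bv vs linear bv}). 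Substituting these identifications into the spectral sequence above, and using that $\iota:\op C_m^G\star\op C_n^H\to \op C_{m+n}^{G\times H}$ is a weak equivalence (Conjecture~\ref{conj:additivity}) so that restriction along it is an equivalence of homotopy categories of modules, yields the asserted spectral sequence of $R$-linear $\op C_{m+n}^{G\times H}$-modules.

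Finally, Theorem~\ref{thm:kuenneth ss} itself is the special case $G=\Lambda(m)$, $H=\Lambda(n)$, for which $\op C_m^{\Lambda(m)}=\op C_m$ is the ordinary little cubes operad, $\Conf^{\Lambda(m)}_k(M)\simeq \Conf_k(M)$, and the additivity conjecture is Dunn's additivity theorem, hence a theorem rather than a hypothesis; moreover the tangential groups $\Lambda(m)\cong \R_{>0}^m$ are contractible hence $R$-projective, so the only remaining hypothesis is the $R$-projectivity of $H_*(\Conf_k(M);R)$ and $H_*(\Conf_k(N);R)$, as stated. I expect the main subtlety, rather than any deep obstacle, to be bookkeeping: ensuring that all the weak equivalences of Proposition~\ref{prop:embedding and conf} and of the additivity theorem are compatible with the operadic module structures and with the identifications needed to move between $\op C_m^G$-modules and $H_*(\op C_m^G)$-modules, so that the isomorphism of $E^2$-pages and of abutments is genuinely one of $R$-linear $\op C_{m+n}^{G\times H}$-modules; this is where the $R$-flatness consequences of the projectivity hypotheses and Lemmas~\ref{lem:bar vs linear bar}, \ref{lem:bv vs linear bv} do the real work.
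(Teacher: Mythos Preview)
Your proposal is correct and follows exactly the approach the paper takes: the corollary is stated there simply as the result of ``combining'' the global additivity theorem of \cite{DwyerHessKnudsen:CSP} with Theorem~\ref{thm:general kuenneth}, with no further argument given. You have merely filled in the bookkeeping (verifying projectivity of the operads via the structure of $\op C_m^G(k)\simeq\Conf_k^G(\mathbb{R}^m)$, invoking Proposition~\ref{prop:embedding and conf} to translate between $\op C_M^G$ and $\Conf^G(M)$, and using the weak equivalence $\iota$ to upgrade the module structure) that the paper leaves implicit.
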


Since Conjecture \ref{conj:additivity} is known to hold in the classical case of $G=\Lambda_m$ and $H=\Lambda_n$ \cite{dunn, brinkmeier}, the proof of Theorem \ref{thm:kuenneth ss} is complete.

\section{Formality and collapse}

In this section, we prove Theorem \ref{thm:collapse}, which asserts that the spectral sequence of Theorem \ref{thm:kuenneth ss} collapses in characteristic zero. In fact, we will show that collapse occurs in any situation in which the operads in question are formal.

We maintain our convention that homology and tensor products are taken with respect to a fixed commutative, unital ring $R$.

\subsection{Formality and Yoneda diagrams}

Throughout this section, $\op O$ denotes a fixed operad in $\Ch{R}$. The homology $H_*(\op O)$ is then an operad in both $\Ch{R}$ and $\grMod{R}$. In order to avoid confusion, we reflect this distinction in the notation for categories of modules.

\begin{definition}
We say that $\op O$ is \emph{formal} if there is a zig-zag \[\op O\xleftarrow{f} \widetilde{\op O}\xrightarrow{g} H_*(\op O)\] of weak equivalences of operads such that the induced automorphism of $H_*(\op O)$ is the identity.
\end{definition}

Although it appears stronger, this condition is equivalent to the existence of an isomorphism between $\op O$ and $H_*(\op O)$ in the homotopy category of operads in $\Ch{R}$. We record the following basic observation about the homology of modules over a formal operad.

\begin{lemma}\label{lem:formality and homology}
If $\op O$ is formal, and both $\op O$ and $H_*(\op O)$ are $R$-projective, then the diagram of functors \[\xymatrix{
\Ho(\Mod{\op O})\ar[dr]_-{H_*}\ar[r]^-{f^*}&\Ho(\Mod{\widetilde{\op O}})\ar[d]^-{H_*}\ar[r]^-{\mathbb{L}g_!}& \Ho(\Mod{H_*(\op O)}(\Ch{R}))\ar[dl]^-{H_*}\\
&\Mod{H_*(\op O)}(\grMod{R})
}\] commutes.
\end{lemma}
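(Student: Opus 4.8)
The plan is to prove commutativity by a compatibility-of-homology argument, tracking an $\op O$-module around both paths of the triangle. Fix a cofibrant $\op O$-module $\op M$; since $\op O$ and $H_*(\op O)$ are $R$-projective, $H_*$ sends cofibrant objects to projective modules, so all the homology functors behave well and the derived functors in sight are computed on cofibrant replacements. The left triangle, involving $f^*$, is the easy half: the diagram in the excerpt displaying naturality of $R$-linearization with respect to base change says precisely that $H_* \circ f^* \cong f_R^* \circ H_*$; but $f$ is a weak equivalence inducing the identity on $H_*(\op O)$, so $f_R^*$ is (isomorphic to) the identity functor on $\Mod{H_*(\op O)}(\grMod R)$. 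Hence $H_* \circ f^* \cong H_*$ on the homotopy category, which is the commutativity of the left-hand triangle.

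The right triangle, involving $\mathbb L g_!$, is where the work lies. First I would note that $g\colon \widetilde{\op O}\to H_*(\op O)$ is a weak equivalence between $R$-projective operads, so $\mathbb L g_!$ is an equivalence $\Ho(\Mod{\widetilde{\op O}})\xrightarrow{\simeq}\Ho(\Mod{H_*(\op O)}(\Ch R))$ with inverse $g^*$. It therefore suffices to show $H_*\circ \mathbb L g_! \cong H_*$, or equivalently (precomposing with $g^*$) that $H_* \cong H_* \circ g^*$ on $\Ho(\Mod{H_*(\op O)}(\Ch R))$. Again the base-change compatibility of linearization gives $H_* \circ g^* \cong g_R^* \circ H_*$, and $g_R^*$ is the identity because $g$ induces the identity on $H_*(\op O)$; this requires only that applying $H_*$ to the underlying chain complexes is the identity on objects for modules that are already in $\Ch R$, which is the content of the remark after the definition of $R$-linearization of modules together with $R$-flatness of $H_*(\op O)$. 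One subtlety: to invoke the naturality square for $\vp = g$ one should know $g_!$ computes $\mathbb L g_!$ on cofibrant objects and that $H_*$ of a cofibrant $\widetilde{\op O}$-module agrees with $H_*$ of its image under $g_!$; this is exactly the flat/projective case of compatibility of linearization with extension of scalars, the analogue of Lemma~\ref{lem:free module homology} for the map $g$, which holds since cofibrant modules are retracts of cell modules built from totally free modules on $R$-projective sequences.

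Assembling the two halves: starting from $\op M \in \Ho(\Mod{\op O})$, going across the top to $\Ho(\Mod{H_*(\op O)}(\Ch R))$ and then down by $H_*$ yields $\mathbb Lg_! f^* \op M$ followed by $H_*$, which by the right-triangle identity is $H_*(f^*\op M)$, which by the left-triangle identity is $H_*(\op M)$; this agrees with the direct diagonal $H_*\colon \Ho(\Mod{\op O})\to \Mod{H_*(\op O)}(\grMod R)$, and the middle vertical $H_*$ sits in both triangles by construction. The main obstacle, and the only place genuine care is needed, is verifying that the base-change naturality square for $R$-linearization, stated in the excerpt for simplicial operads and modules in $\cat{sSet}$, is available in the form needed here for operads and modules in $\Ch R$ after passing to homotopy categories: concretely, that $\mathbb L g_!$ and $f^*$ commute with the (underived) homology functor up to the coherent natural isomorphism coming from the Künneth map, which ultimately rests on the $R$-projectivity hypotheses to force all relevant Künneth maps to be isomorphisms. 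Everything else is formal manipulation of adjunctions and the fact that $f$ and $g$ induce the identity on $H_*(\op O)$.
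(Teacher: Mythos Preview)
Your proof is correct and follows essentially the same approach as the paper: the left triangle is immediate since $f^*$ does not alter underlying complexes, and for the right triangle the paper likewise observes that $\mathbb{L}g_!$ is an equivalence with inverse $g^*$ (citing \cite[Thm.~16.B]{fresse}), thereby reducing to the same observation applied to $g^*$. Your paragraph on controlling $g_!$ on cofibrant objects is an unnecessary detour once you have passed to $g^*$, and strictly speaking it is the composite $H_*(g)\circ H_*(f)^{-1}$---not $f$ or $g$ individually---that is required to be the identity on $H_*(\op O)$, but neither point affects the validity of the argument.
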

\begin{proof}
The commuting of the lefthand triangle is obvious. Under the assumption of projectivity, $\mathbb{L}g_!$ is an equivalence with inverse $g^*$ \cite[Thm. 16.B]{fresse}, so the righthand triangle commutes for the same reason.
\end{proof}

The goal of this section is to investigate a consequence of formality at the level of certain diagrams of $\op O$-modules.

\begin{definition}
Let $\cat I$ be a small category. A functor $F:\cat I\to \Mod{\op O}$ is called a \emph{Yoneda diagram} if it factors as \[F:\cat I\xrightarrow{} \F(\op O)\xrightarrow{Y_{\op O}} \Mod{\op O},\] where $Y_{\op O}$ denotes the enriched Yoneda embedding.
\end{definition}

\begin{proposition}\label{prop:yoneda hocolim}
Suppose that $\op O$ is formal and that both $\op O$ and $H_*(\op O)$ are $R$-projective. If $F:\cat I\to \Mod{\op O}$ is a Yoneda diagram, then the canonical map\[\hocolim_{\cat I} H_*(F)\to H_*(\hocolim_{\cat I} F)\] in $\Ho(\Mod{H_*(\op O)}(\Ch{R}))$ is an isomorphism.
\end{proposition}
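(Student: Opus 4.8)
The idea is to reduce the claim about a general Yoneda diagram to the case of a single representable, and then to exploit formality. First I would reduce to the case where $F$ is the constant diagram at a single representable $\F(\op O)(-,I)$, or more precisely to representables. The key point is that $\hocolim$ is computed by a bar construction $\hocolim_{\cat I}F \simeq |B_\bullet(\ast,\cat I, F)|$, which is levelwise a coproduct of representables $\coprod_{i_0\to\cdots\to i_n}\F(\op O)(-,F(i_n))$, since $F$ factors through the Yoneda embedding. Because $H_*$ commutes with coproducts and with geometric realization (it is computed by a bicomplex whose homology is the realization spectral sequence), it suffices to know that $H_*$ sends each representable $\op O$-module $Y_{\op O}(I)=\F(\op O)(-,I)$ to the corresponding representable $H_*(\op O)$-module, compatibly with the functoriality in $I\in \F(\op O)$, and that the relevant bicomplex comparison is a weak equivalence. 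Concretely, I want the square comparing $\hocolim_{\cat I}H_*(F)$ and $H_*(\hocolim_{\cat I}F)$ to be a levelwise equivalence of simplicial objects, whence an equivalence on realizations.

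Second, I would identify $H_*$ of a representable. For a representable $\op O$-module, $\mathbb U_{\op O}Y_{\op O}(I)$ is the free sequence-level module on a point placed in arity $|I|$; in the linearized/dg picture this becomes the free $H_*(\op O)$-module $\mathbb F_{H_*(\op O)}$ on the corresponding generator, by Lemma~\ref{lem:free module homology} together with the $R$-projectivity hypotheses (which guarantee the Künneth isomorphisms). Here is where formality enters: by Lemma~\ref{lem:formality and homology}, the composite $\Ho(\Mod{\op O})\xrightarrow{f^*}\Ho(\Mod{\widetilde{\op O}})\xrightarrow{\mathbb L g_!}\Ho(\Mod{H_*(\op O)}(\Ch R))$ followed by $H_*$ agrees with $H_*\colon \Ho(\Mod{\op O})\to\Mod{H_*(\op O)}(\grMod R)$. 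Since $f^*$ and $\mathbb L g_!\simeq (g^*)^{-1}$ are both derived equivalences (using \cite[Thm.~16.B]{fresse}) that carry representables to representables up to weak equivalence, and since homotopy colimits are preserved by equivalences of homotopy categories, the comparison map for $F$ is identified with the corresponding comparison map over the formal operad $H_*(\op O)$, viewed now in $\Ch R$. But over $H_*(\op O)$ one can further transport along the equivalence to $\Mod{H_*(\op O)}(\grMod R)$, where homotopy colimits of Yoneda diagrams are strict colimits of projectives and $H_*$ is the identity; so the comparison map is an isomorphism there, hence also for $\op O$.

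The main obstacle I anticipate is the bookkeeping in the first step: making precise that $H_*$ commutes with the relevant homotopy colimit \emph{levelwise} for Yoneda diagrams, i.e. that the natural map $\hocolim_{\cat I}H_*(F)\to H_*(\hocolim_{\cat I}F)$ is modeled by the bicomplex of Proposition~\ref{prop:linearization spectral sequence} applied to the bar construction, and that each level is a Künneth isomorphism because the mapping objects of $\F(\op O)$ are $R$-projective (this is where the hypothesis that $\op O$ and $H_*(\op O)$ are $R$-projective is used in earnest). Once that is in place, the formality input is a clean application of Lemma~\ref{lem:formality and homology}, and the collapse of the comparison over $\grMod R$ is essentially formal: representable modules are projective, Yoneda diagrams have strict colimits computing their homotopy colimits after cofibrant replacement of the diagram, and $H_*$ is the identity functor on $\Mod{H_*(\op O)}(\grMod R)$ under the Dold--Kan identification. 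I would also remark that naturality of all comparison maps is what lets one pass between the three presentations ($\Mod{\op O}$, $\Mod{H_*(\op O)}(\Ch R)$, $\Mod{H_*(\op O)}(\grMod R)$) without ambiguity.
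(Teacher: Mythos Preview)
Your second step contains the essential idea and matches the paper's approach: use the derived equivalences $f^*$ and $\mathbb{L}g_!$ furnished by formality to transport the problem to $\Mod{H_*(\op O)}(\Ch_R)$, exploiting that these functors preserve both representables and homotopy colimits, and then invoke Lemma~\ref{lem:formality and homology}. The paper packages this as a single natural weak equivalence $H_*(F)\xrightarrow{\simeq}\mathbb{L}g_!f^*F$ of $\cat I$-diagrams, from which $\hocolim H_*(F)\simeq \mathbb{L}g_!f^*(\hocolim F)$ and hence $H_*(\hocolim H_*(F))\cong H_*(\hocolim F)$ follow formally; a spectral sequence comparison then yields the claim.

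There are, however, two genuine soft spots in your write-up. First, your opening ``reduction via the bar construction'' is circular: you correctly observe that $H_*(|B_\bullet|)$ is governed by a bicomplex spectral sequence, but then you need that spectral sequence to collapse, which is precisely the statement you are trying to prove. Nothing has been reduced; you should drop this step and argue directly with the natural transformation, as your second step already does. Second, and more substantively, asserting that $f^*$ and $\mathbb{L}g_!$ ``carry representables to representables up to weak equivalence'' is a pointwise statement, whereas what you need is that $\mathbb{L}g_!f^*Y_{\op O}\simeq Y_{H_*(\op O)}$ \emph{naturally} in the $\F(\op O)$-variable. This is exactly where the clause ``the induced automorphism of $H_*(\op O)$ is the identity'' in the definition of formality does real work, and the paper devotes an explicit diagram chase to it (checking that the induced map $\psi$ equals $Y_{H_*(\op O)}([\varphi])$ for each operation $\varphi$). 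Your plan does not address this. Finally, your closing appeal to an ``equivalence to $\Mod{H_*(\op O)}(\grMod_R)$'' is not available---there is no such equivalence of homotopy theories---though once the natural weak equivalence of diagrams is in hand this step becomes unnecessary.
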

\begin{proof} The main step in the proof is to construct a weak equivalence $H_*(F)\xrightarrow{\simeq} \mathbb{L}g_!f^*F $ of diagrams of $H_*(\op O)$-modules. Taking this task for granted momentarily, there results the sequence of isomorphisms \[H_*(\hocolim_{\cat I} H_*(F))\xrightarrow{\cong} H_*(\hocolim_{\cat I} \mathbb{L}g_!f^*F)\xrightarrow{\cong}H_*(\mathbb{L}g_!f^*\hocolim_{\cat I}F)\cong H_*(\hocolim_{\cat I} F)\] in $\Ho(\Mod{H_*(\op O)}(\Ch{R}))$, where the second uses that $f^*$ and $\mathbb{L}g_!$ both preserve homotopy colimits, and the third uses Lemma \ref{lem:formality and homology}. It follows that the standard spectral sequence, which converges from the leftmost term to the rightmost term, collapses, implying the claim.

We construct the desired map in the universal case of $\cat I=\F(\op O)$ and show that it is natural. Let $I$ and $J$ be finite sets and $\varphi\in \F(\op O)(I,J)$ an operation. Consider the following commuting diagram of $H_*(\op O)$-modules:
\[\xymatrix{
\mathbb{L}g_!f^*Y_{\op O}(I)\ar[rr]^-{\mathbb{L} g_!f^*Y_{\op O}(\varphi)}&& \mathbb{L}g_!f^*Y_{\op O}(J)\\
\mathbb{L} g_!Y_{\widetilde {\op O}}(I)\ar[d]\ar[u]\ar[rr]^-{\mathbb{L} g_!Y_{\widetilde{\op O}}(\varphi)}&& \mathbb{L} g_!Y_{\widetilde{\op O}}(J)\ar[d]\ar[u]\\
g_!Y_{\widetilde {\op O}}(I)\ar@{=}[d]_-\wr\ar[rr]^-{g_!f^*Y_{\op O}(\varphi)}&& g_!Y_{\widetilde{\op O}}(J)\ar@{=}[d]^-\wr\\
Y_{H_*(\op O)}(I)\ar@{-->}[rr]^-{\psi}&& Y_{H_*(\op O)}(J)
}\] (the map $\psi$ is defined by requiring the bottom square to commute). We abuse notation slightly in viewing the value of the derived functor $\mathbb{L}g_!$ as an object of the category of modules itself rather than of the homotopy category, for example by applying a cofibrant replacement functor pointwise.

Now, since $Y_{\widetilde{\op O}}$ takes values in cofibrant modules, and since $g_!$ is left Quillen, the middle vertical arrows are both weak equivalences. Since the top vertical arrows are weak equivalences by assumption, it remains to verify that $\psi=Y_{H_*(\op O)}\left([\varphi]\right)$, but our assumption on the induced automorphism of $H_*(\op O)$ implies that this equality holds up to homology in $\F(H_*(\op O))(I,J)$. Since this complex has trivial differential, the claim follows.
\end{proof}

\subsection{Configuration spaces from Yoneda diagrams}

In this section we complete the proof of Theorem \ref{thm:collapse}. The main step is to realize the module organizing the $G$-structured configuration spaces of a $G$-framed manifold as the homotopy colimit of a Yoneda diagram. For the sake of brevity, we write $\op E_m^G:=C_*(\End_{\Mfld_{m}^{G}}(\mathbb{R}^{m});R)$ and $\op E_M^G:=C_*(\HHom_{\Mfld_m^G}(\mathbb{R}^m,M);R)$, which we consider as an operad in $\Ch{R}$ and a module over that operad, respectively. Notice that $\op E_m^G$ is always $R$-projective, and $H_*(\op E_m^G)$ is $R$-projective provided $H_*(G)$ is so---this last claim follows from the K\"{u}nneth and universal coefficient theorems and the fact that $H_*(\Conf_k(\mathbb{R}^n);\mathbb{Z})$ is free Abelian for every $n$ and $k$ \cite[Lem. III.6.2]{cohen-lada-may}. 

\begin{notation} Given a topological category $\cat{C}$, we write $\cat{C}^\delta$ for the underlying discrete category---that is, the hom sets in $\cat{C}^\delta$ are the underlying sets of the hom spaces of $\cat{C}$. 
\end{notation}

Write $\Disk_m^G\subseteq \Mfld_m^G$ for the full topological subcategory spanned by those $G$-framed manifolds that are $G$-framed diffeomorphic to a (possibly empty) finite disjoint union of copies of $\mathbb{R}^m$ with its canonical $G$-structure. Since the discrete topology is initial, there is a canonical enriched functor $\cat{C}^\delta\to \cat{C}$. Thus, for a $G$-framed manifold $M$, there is a  composite functor \[\Disk_{m/M}^{G,\delta}\to \Disk_{m}^{G,\delta}\to \Disk_m^G\to \Mfld_{m}^G\xrightarrow{\op E_{(-)}^G} \Mod{\op E_{m}^G},\] which we abusively write as $\op E_{(-)}^G$, where the first functor in the sequence forgets the map to $M$.

\begin{lemma}\label{lem:hocolim over disks}
Let $G\to GL(m)$ and $H\to GL(n)$ be dilation representations, $M$ a $G$-framed and $N$ an $H$-framed manifold, and $R$ a commutative ring. \begin{enumerate}
\item The functor $\op E_{(-\times-)}^{G\times H}: \Disk_{m/M}^{G,\delta}\times \Disk_{n/N}^{H,\delta}\to \Mod{\op E_{m+n}^{G\times H}}$ is a Yoneda diagram.
\item The natural map \[\hocolim_{\Disk_{m/M}^{G,\delta}\times \Disk_{n/N}^{H,\delta}}\op E_{(-\times-)}^{G\times H}\to\op E_{M\times N}^{G\times H}\] is an isomorphism in $\Ho(\Mod{\op E_{m+n}^{G\times H}})$.
\end{enumerate}
\end{lemma}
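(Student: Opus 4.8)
The plan is to reduce Lemma \ref{lem:hocolim over disks} to a single-variable statement, which is itself essentially a consequence of the general structured manifold technology of \cite{DwyerHessKnudsen:CSP} and \cite{andrade}, together with a standard cofinality/density argument.

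\textbf{Step 1: The single-variable case.} First I would establish the analogue of both parts of the lemma with a single factor, i.e.\ that $\op E_{(-)}^G\colon \Disk_{m/M}^{G,\delta}\to \Mod{\op E_m^G}$ is a Yoneda diagram whose homotopy colimit maps by a weak equivalence to $\op E_M^G$. For the Yoneda claim, the point is that for a disjoint union $U=\amalg_k\mathbb R^m$ of copies of Euclidean space with its canonical $G$-structure, the module $\op E_U^G=C_*(\HHom_{\Mfld_m^G}(\mathbb R^m,U);R)$ is corepresented: using Proposition \ref{prop:embedding and conf} and Theorem \ref{thm:cubes and euclidean} one identifies $\HHom_{\Mfld_m^G}(\mathbb R^m,\amalg_k\mathbb R^m)$ up to equivalence with $\bigsqcup$ over the $k$ components, i.e.\ with the value at $\{1,\dots,k\}$ of the free $\op E_m^G$-module on a point in arity $k$, which is $Y_{\op E_m^G}$ evaluated at the object $k$ of $\F(\op E_m^G)$; naturality in $G$-framed embeddings of disjoint unions of $\mathbb R^m$ is exactly the functoriality needed to factor $\op E_{(-)}^G|_{\Disk_m^{G,\delta}}$ through $\F(\op E_m^G)\xrightarrow{Y} \Mod{\op E_m^G}$. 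Precomposing with the forgetful functor $\Disk_{m/M}^{G,\delta}\to\Disk_m^{G,\delta}$ preserves the property of being a Yoneda diagram, giving part (1) in the one-variable case. For the homotopy-colimit statement, I would invoke that $M$ is, up to the relevant notion of equivalence, a homotopy colimit of its disks: this is the content of the $\Conf$-analogue of the $\Disk$-algebra factorization theorem (Andrade's thesis, or \cite[\S4]{DwyerHessKnudsen:CSP}), which says that $\HHom_{\Mfld_m^G}(\mathbb R^m,-)$ applied to $M$ is the homotopy colimit over $\Disk_{m/M}^{G,\delta}$ of its restriction to disks, using that the discrete-topology functor $\cat C^\delta\to\cat C$ induces an equivalence on the relevant homotopy colimits (a cofinality statement for the category of disks embedded in $M$). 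Applying $C_*(-;R)$, which commutes with homotopy colimits, yields the weak equivalence in $\Ho(\Mod{\op E_m^G})$.

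\textbf{Step 2: From one variable to two.} Given the single-variable results for $(G,m,M)$ and $(H,n,N)$, I would combine them using the product structure on $G$-framed manifolds. A $G$-framed disk over $M$ and an $H$-framed disk over $N$ give, by Cartesian product, a $G\times H$-framed disk over $M\times N$, and this assembles into the functor $\op E_{(-\times-)}^{G\times H}$ of the statement. To see that it is a Yoneda diagram (part (1)), note that on a pair of disjoint unions of Euclidean spaces the product is again a disjoint union of copies of $\mathbb R^{m+n}$ with its canonical $G\times H$-structure, and the corepresentability identification from Step 1 applied to $\op E_{m+n}^{G\times H}$ shows the composite factors through $\F(\op E_{m+n}^{G\times H})\xrightarrow{Y}\Mod{\op E_{m+n}^{G\times H}}$; concretely this uses that $\F(\op E_m^G\star\op E_n^H)\to\F(\op E_{m+n}^{G\times H})$ behaves well, together with the functor $\mu$ of Section \ref{sec:bvt} on the level of Yoneda diagrams. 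For part (2), I would write $\op E_{M\times N}^{G\times H}$ as a double homotopy colimit: first over $\Disk_{m/M}^{G,\delta}$ (Step 1, in the category of $\op E_m^G$-modules, then base-changed to $\op E_{m+n}^{G\times H}$-modules along the canonical operad map) and then over $\Disk_{n/N}^{H,\delta}$. The key inputs are that $C_*(-;R)$ and the base-change functors preserve homotopy colimits, and that products of manifolds distribute over these homotopy colimits because $\Disk^{\delta}_{m+n/M\times N}$ receives a final functor from $\Disk_{m/M}^{G,\delta}\times\Disk_{n/N}^{H,\delta}$—every $G\times H$-framed disk over $M\times N$ is covered by products of disks, by a Morlet-type or partition-of-unity argument applied to the open cover of $M\times N$ by products of coordinate charts.

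\textbf{Main obstacle.} The delicate point, and the step I expect to occupy the bulk of the argument, is the cofinality/density statement needed in both steps: that the functor $\Disk_{m/M}^{G,\delta}\times\Disk_{n/N}^{H,\delta}\to\Disk_{m+n/M\times N}^{G\times H,\delta}$ (and in the single-variable case, $\Disk_{m/M}^{G,\delta}\hookrightarrow$ the relevant category of opens) is homotopy final, so that the homotopy colimits agree. This is where the geometry of $G$-framed embeddings enters in an essential way: one must show that an arbitrary $G\times H$-framed disk $\amalg_k\mathbb R^{m+n}\hookrightarrow M\times N$ can be refined, coherently and compatibly with the tangential structure, to one that is a product, and that the comma categories witnessing this are contractible. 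I would handle this by an isotopy/shrinking argument reducing a disk to a small one contained in a product chart, combined with the observation that the relevant spaces of $G$-framed embeddings of a disk into a Euclidean chart are contractible (Theorem \ref{thm:cubes and euclidean} together with the dilation-representation hypothesis, which is exactly what makes skew little cubes—hence straight-line-homotopic embeddings—cofinal); the additivity input is only needed later (in Corollary \ref{cor:structured spectral sequence}), not here, since here we work directly with $\op E_{m+n}^{G\times H}$ and never need to split it as a tensor product.
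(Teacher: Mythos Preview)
Your handling of part (1) matches the paper's: both observe that objects of $\Disk_{m+n}^{G\times H}$ are corepresentable, so the composite factors through the Yoneda embedding of $\F(\op E_{m+n}^{G\times H})$.

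For part (2) the routes diverge substantially. You propose first establishing a single-variable statement and then passing to the product via homotopy finality of $\Disk_{m/M}^{G,\delta}\times\Disk_{n/N}^{H,\delta}\to\Disk_{m+n/M\times N}^{G\times H,\delta}$, which you correctly flag as the main obstacle. The paper never considers this functor. Instead it keeps the product indexing category throughout, passes to $\infty$-categories using the Ayala--Francis results \cite[Prop.~2.19, Prop.~3.9]{ayala-francis} that $\Disk_{m/M}^{G,\delta}\to\Disk_{m/M}^{G,\infty}$ is a localisation at isotopy equivalences (hence final) and that forgetting the structure group on the $\infty$-overcategory is an equivalence, and then compares with the product of posets $\Disk(M)\times\Disk(N)$, where a direct hypercover argument \cite[Lem.~5.5]{DwyerHessKnudsen:CSP} for the cover of $M\times N$ by products of Euclidean opens finishes the job. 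The single-variable case is then the degenerate case $N=\pt$, not a separate input.

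Your route could in principle be completed, but the cofinality you isolate is genuinely harder than what the paper uses: at the discrete level the comma categories involve honest $G\times H$-framed embeddings rather than isotopy classes, and verifying contractibility of their nerves amounts to exactly the isotopy-to-product-chart manoeuvre you sketch---content the paper obtains for free by localising. Note also that your alternative ``iterated homotopy colimit'' description in Step~2 has a gap as written: the single-variable result gives $\op E_M^G\simeq\hocolim_U\op E_U^G$ in $\Mod{\op E_m^G}$, but not $\op E_{M\times N}^{G\times H}\simeq\hocolim_U\op E_{U\times N}^{G\times H}$ in $\Mod{\op E_{m+n}^{G\times H}}$, since $U\times N$ is not a disjoint union of $(m{+}n)$-disks. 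Filling that in requires its own cosheaf/hypercover input, so iteration alone does not let you bypass the geometric finality step.
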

\begin{proof}
The first claim follows easily from the definitions and the equivalence of simplicial categories $\Disk_{m+n}^{G\times H}\simeq \F(\op \End_{\Mfld_{m+n}^{G\times H}}(\mathbb{R}^{m+n}))$.

The second claim is essentially standard. In order not to duplicate efforts made elsewhere in the literature, we will pass briefly into an $\infty$-categorical (i.e., quasicategorical) context---see \cite{lurie} for a general reference on quasicategories. The reader is warned that our notation differs from that of our references.

Our first task is to explain the following commutative diagram:
\[\xymatrix{
\Disk(M)\times\Disk(N)\ar@/_3.0pc/[ddr]\ar[r]&\Disk_{m/M}^{G,\delta}\times \Disk_{n/N}^{H,\delta}\ar[d]\ar[r]&\Disk_{m/M}^{G,\infty}\times \Disk_{n/N}^{H,\infty}\ar@{-->}[ddl]\\
&\Disk_m^{G,\delta}\times \Disk_{n}^{H,\delta}\ar[d]_-{\Conf^{G\times H}_k(-\times-)}\\
&\cat{Top}^\infty
}\] We consider the ordinary categories appearing in this diagram as $\infty$-categories via the (suppressed) nerve functor, while the superscript $\infty$ indicates an $\infty$-category obtained from a topological category via the topological nerve functor \cite[Def.~1.1.5.5]{lurie}. The category $\Disk(M)$ is the partially ordered set of (possibly empty) unions of disjoint Euclidean neighborhoods in $M$ (resp. $N$). 

As for the functors, the unmarked vertical functor is the product of the projections from the overcategories, the righthand horizontal functor is the canonical one, the lefthand horizontal functor is given by a choice of a set of parametrizations of the Euclidean neighborhoods in $M$ and $N$, and the curved functor is defined by commutativity.

We claim that the natural map from the $\infty$-categorical colimit of the vertical composite to $\Conf^{G\times H}_k(M\times N)$ is an equivalence. To prove this claim, we will argue that this colimit agrees with the colimit of the curved functor, then prove the corresponding claim for the curved functor.

According to \cite[Prop.~2.19]{ayala-francis}, the righthand horizontal functor is a localization inverting the isotopy equivalences. Since configuration spaces are isotopy invariant, we conclude the existence of the dashed filler. Moreover, the same horizontal functor, as a localization, is final, so the colimit in question coincides with the colimit of the dashed functor \cite[Prop.~4.1.1.8]{lurie}. To conclude that this colimit coincides with that of the curved functor, it suffices to show that the horizontal composite is final, which follows by combining \cite[Prop.~5.5.2.13]{lurie2} with the proof of \cite[Prop.~3.9]{ayala-francis}.

We conclude the claim by noting that the $\infty$-categorical colimit coincides in the homotopy category with the homotopy colimit \cite[Thm.~4.2.4.1]{lurie}, and the natural map \[\hocolim_{\Disk(M)\times \Disk(N)}\Conf^{G\times H}_k(-\times-)\to \Conf^{G\times H}_k(M\times N)\] is a weak equivalence by a well-known hypercover argument (see \cite[Lem.~5.5]{DwyerHessKnudsen:CSP}, for example).

The lemma now follows, after a second invocation of \cite[Thm.~4.2.4.1]{lurie}, from the natural equivalences $\Emb^{G\times H}(\amalg_k\mathbb{R}^{m+n}, -\times -)\xrightarrow{\sim} \Conf_k^{G\times H}(-\times-)$, since the singular chains functor preserves weak equivalences and homotopy colimits.
\end{proof}

Note the special cases of $M=\pt$ and $N=\pt$, respectively.

\begin{hypothesis}\label{hyp:formality}
For the dilation representations $G\to GL(m)$ and $H\to GL(n)$ and the commutative ring $R$, the operads $\op E_m^G$, $\op E_n^H$, and $\op E_{m+n}^{G\times H}$ are formal.
\end{hypothesis}

We strongly emphasize that this hypothesis is \emph{not} a conjecture; indeed, it is known to fail in some cases \cite{moriya, Salvatore:PNFLDOCT}. On the other hand, by \cite{fresse-willwacher}, the hypothesis holds for any $m$ and $n$ as long as $G$ and $H$ are both contractible and $R$ is a field of characteristic zero---see also \cite{kontsevich,lambrechts-volic}. Thus, Theorem \ref{thm:collapse} is a consequence of the following more general conditional statement.

\begin{theorem}\label{thm:structured collapse}
Let $G\to GL(m)$ and $H\to GL(n)$ be dilation representations, $M$ a $G$-framed and $N$ an $H$-framed manifold, and $R$ a commutative ring. If Conjecture \ref{conj:additivity} and Hypothesis \ref{hyp:formality} hold for $G$, $H$, and $R$, and if $G$, $H$, $\Conf_k^G(M)$, and $\Conf_\ell^H(N)$ are all $R$-projective, then the spectral sequence of Corollary \ref{cor:structured spectral sequence} degenerates at $E^2$.
\end{theorem}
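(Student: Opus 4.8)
The plan is to realize the spectral sequence of Corollary~\ref{cor:structured spectral sequence} as the linearization spectral sequence of Proposition~\ref{prop:linearization spectral sequence} attached to a bar-construction model for $\op E_M^G\star^{\mathbb L}\op E_N^H$, and then to feed formality---through the Yoneda-diagram machinery of Lemma~\ref{lem:hocolim over disks} and Proposition~\ref{prop:yoneda hocolim}---into that model to prove that passage to homology commutes with the homotopy colimit defining $\op E_M^G\star^{\mathbb L}\op E_N^H$; the spectral sequence then collapses exactly as in the proof of Proposition~\ref{prop:yoneda hocolim}.

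First I would pass to the chain level. Applying singular chains and Theorem~\ref{thm:cubes and euclidean} replaces $\op C_m^G$, $\op C_n^H$ and $\op C_M^G$, $\op C_N^H$ by $\op E_m^G$, $\op E_n^H$ and $\op E_M^G$, $\op E_N^H$ without changing the spectral sequence of Theorem~\ref{thm:general kuenneth} from the $E^2$-page on, since that spectral sequence depends only on homology types and the projectivity hypotheses of Theorem~\ref{thm:structured collapse}, together with the automatic $R$-projectivity of $\op E_m^G$ and $\op E_n^H$, supply precisely what Theorem~\ref{thm:general kuenneth} requires. Thus the spectral sequence of Corollary~\ref{cor:structured spectral sequence} is the spectral sequence of Proposition~\ref{prop:linearization spectral sequence} applied to the simplicial $\op E_m^G\star\op E_n^H$-module $\op W_\bullet$ obtained as the diagonal of the two comonadic bar constructions, whose realization $|\op W_\bullet|$ computes $\op E_M^G\star^{\mathbb L}\op E_N^H$; by Conjecture~\ref{conj:additivity} and \cite[Thm.~5.7]{DwyerHessKnudsen:CSP} this realization is identified with $\op E_{M\times N}^{G\times H}$, so the abutment is $H_*(\Conf^{G\times H}(M\times N))$, while the $E^2$-page is $H_p\big(H_*(\op E_M^G)\star^{\mathbb L}H_*(\op E_N^H)\big)_q$ as computed in the proof of Theorem~\ref{thm:general kuenneth}.

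The heart of the argument is to present $\op E_M^G\star^{\mathbb L}\op E_N^H$ as the homotopy colimit of a Yoneda diagram and to run formality through it. By Lemma~\ref{lem:hocolim over disks} and its special cases, $\op E_M^G\simeq\hocolim_{\Disk_{m/M}^{G,\delta}}\op E_{(-)}^G$ and $\op E_N^H\simeq\hocolim_{\Disk_{n/N}^{H,\delta}}\op E_{(-)}^H$, both Yoneda diagrams; since $-\star^{\mathbb L}-$ preserves homotopy colimits in each variable (Lemma~\ref{lem:bvt-leftquillen}), this yields $\op E_M^G\star^{\mathbb L}\op E_N^H\simeq\hocolim_{\Disk_{m/M}^{G,\delta}\times\Disk_{n/N}^{H,\delta}}\big(\op E_{(-)}^G\star^{\mathbb L}\op E_{(-)}^H\big)$. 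On objects---disjoint unions of Euclidean spaces---$\op E_{(-)}^G\star^{\mathbb L}\op E_{(-)}^H$ is a Boardman--Vogt tensor product of representables, hence again a representable, and Conjecture~\ref{conj:additivity} (in the form $\op E_m^G\star\op E_n^H\simeq\op E_{m+n}^{G\times H}$) identifies this diagram with the Yoneda diagram $\op E_{(-\times-)}^{G\times H}$ of Lemma~\ref{lem:hocolim over disks}(1). Now Proposition~\ref{prop:yoneda hocolim} applies: Hypothesis~\ref{hyp:formality} makes $\op E_{m+n}^{G\times H}$ formal, and the hypotheses on $G$ and $H$ make $H_*(\op E_{m+n}^{G\times H})$ $R$-projective (by the Künneth and universal coefficient theorems and the freeness of $H_*(\Conf_k(\mathbb{R}^{m+n});\mathbb{Z})$), so $H_*$ commutes with this homotopy colimit. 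Running the same argument on the homology side---using formality of $\op E_m^G$ and $\op E_n^H$, that the linearized Boardman--Vogt tensor product also preserves homotopy colimits (Lemma~\ref{lem:bvt-leftquillen-lin}), and that on representables it satisfies the Künneth isomorphism (Lemma~\ref{lem:bv vs linear bv}, again via additivity on homology)---shows that $H_*(\op E_M^G)\star^{\mathbb L}H_*(\op E_N^H)$ is itself the homotopy colimit of $H_*(\op E_{(-\times-)}^{G\times H})$. Combining the two presentations, the natural comparison map $H_*(\op E_M^G)\star^{\mathbb L}H_*(\op E_N^H)\to H_*(\op E_M^G\star^{\mathbb L}\op E_N^H)$ is a weak equivalence, and then, exactly as in the last paragraph of the proof of Proposition~\ref{prop:yoneda hocolim}, the spectral sequence converging from its source to its target---that is, the spectral sequence of Corollary~\ref{cor:structured spectral sequence}---collapses at $E^2$. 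Specializing to $G=H=\{e\}$ and $R$ a field of characteristic zero, where Hypothesis~\ref{hyp:formality} holds by \cite{fresse-willwacher} and Conjecture~\ref{conj:additivity} by \cite{dunn,brinkmeier}, recovers Theorem~\ref{thm:collapse}.

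The main obstacle is the bookkeeping underlying the middle paragraph. One must (i) check that the bar-construction model $\op W_\bullet$ is compatible with the homotopy-colimit presentation of $\op E_M^G\star^{\mathbb L}\op E_N^H$, so that ``$H_*$ commutes with the homotopy colimit'' genuinely forces degeneration at $E^2$ rather than merely an abstract isomorphism between $E^2$ and the abutment; (ii) verify the object-level identifications $\op E_{(-)}^G\star^{\mathbb L}\op E_{(-)}^H\simeq\op E_{(-\times-)}^{G\times H}$ and $H_*(\op E_m^G)\star H_*(\op E_n^H)\cong H_*(\op E_{m+n}^{G\times H})$, each of which genuinely invokes Conjecture~\ref{conj:additivity}; and (iii) thread the various natural comparison maps (Lemmas~\ref{lem:free module homology}, \ref{lem:box tensor flat}, \ref{lem:bv vs linear bv}, \ref{lem:bar vs linear bar} and Proposition~\ref{prop:yoneda hocolim}) through the entire diagram so that they cohere. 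None of this is conceptually hard once the machinery of the preceding sections is in place, but checking that all of these equivalences are mutually compatible---not any single one of them---is where the real work lies.
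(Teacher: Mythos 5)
Your proposal is correct and follows essentially the same route as the paper: rewrite everything in terms of $\op E$'s via Theorem \ref{thm:cubes and euclidean}, present the modules as homotopy colimits of Yoneda diagrams over disk categories (Lemma \ref{lem:hocolim over disks}), commute $\star^{\mathbb L}$ and then $H_*$ past these colimits using additivity together with Proposition \ref{prop:yoneda hocolim}, and conclude that the derived tensor product of the homology modules is weakly equivalent to the constant object $H_*\big(\op C^{G\times H}_{M\times N}\big)$, which concentrates $E^2$ in the column $p=0$ and forces degeneration. The one ingredient you elide is the dg-ification functor of Section \ref{section:dgification} (with Proposition \ref{prop:dg left quillen}), which the paper needs in order to apply Proposition \ref{prop:yoneda hocolim}---a statement about modules over operads in $\Ch{R}$---to the diagrams of simplicial graded $R$-linear modules actually appearing in the chain of equivalences.
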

\begin{proof} To reduce notational clutter, we leave implicit all restriction functors along maps of operads. We explain the following sequence of isomorphisms in $\Ho\big(\Mod{(\op C^G_m\star \op C^H_n)_R}^{\cat\Delta^{op}}\big)$:
\begin{align*}
H_*\left(\op C^G_M\right)\star^\mathbb{L} H_*\left(\op C^H_N\right)&\xrightarrow{\simeq}H_*\left(\op E^G_M\right)\star^\mathbb{L} H_*\left(\op E^H_N\right)\\
&\xleftarrow{\simeq} H_*\left(\hocolim_{\Disk_{m/M}^{G,\delta}}\op E^G_{(-)}\right)\star^\mathbb{L} H_*\left(\hocolim_{\Disk_{n/N}^{H,\delta}}\op E^H_{(-)}\right)\\
&\xleftarrow{\simeq}\left(\hocolim_{\Disk_{m/M}^{G,\delta}}H_*\left(\op E^G_{(-)}\right)\right)\star^\mathbb{L}\left(\hocolim_{\Disk_{n/N}^{H,\delta}}H_*\left(\op E^H_{(-)}\right)\right)\\
&\xleftarrow{\simeq} \hocolim_{\Disk_{m/M}^{G,\delta}\times \Disk_{n/N}^{H,\delta}}H_*\left(\op E^G_{(-)}\right)\star^\mathbb{L}H_*\left(\op E^H_{(-)}\right)\\
&\xrightarrow{\simeq} \hocolim_{\Disk_{m/M}^{G,\delta}\times \Disk_{n/N}^{H,\delta}}H_*\left(\op E^{G\times H}_{(-\times-)}\right)\\
&\xrightarrow{\simeq} H_*\left(\hocolim_{\Disk_{m/M}^{G,\delta}\times \Disk_{n/N}^{H,\delta}}\op E^{G\times H}_{(-\times-)}\right)\\
&\xrightarrow{\cong} H_*\left(\op E^{G\times H}_{M\times N}\right)\\
&\xleftarrow{\cong} H_*\left(\op C^{G\times H}_{M\times N}\right).
\end{align*} The first and last follow from Theorem \ref{thm:cubes and euclidean}, the second and seventh from Lemma \ref{lem:hocolim over disks}(2), the fourth follows from Lemma \ref{lem:bvt-leftquillen-lin}, and the fifth from Lemma \ref{lem:bv vs linear bv} (note that the modules in question are totally free), Conjecture \ref{conj:additivity}, and \cite[Thm. 5.6(2)]{DwyerHessKnudsen:CSP}. For the remaining two, we argue as follows. Using the dg-ification functor introduced in Section \ref{section:dgification}, we may interpret the diagrams in question as diagrams of modules over the differential graded operads $H_*(\op E_m^G)$, $H_*(\op E_n^H)$, and $H_*(\op E_{m+n}^{G\times H})$, respectively. For these dg-ified diagrams, the desired equivalences follow, in light of our assumptions, from Proposition \ref{prop:yoneda hocolim} and Lemma \ref{lem:hocolim over disks}(1). Since dg-ification preserves and reflects homotopy colimits in our situation by Proposition \ref{prop:dg left quillen}, the claimed equivalences follow.
\end{proof}

\bibliographystyle{amsalpha} 
\bibliography{raao.bib}
\end{document}